\def\@captype{figure}
 \newcounter{enunciato}[section]
 \newtheorem{itassumption}{Assumption}
 \newtheorem{ittheorem}{Theorem}
 \newtheorem{itcorollary}{Corollary}
 \newtheorem{itlemma}{Lemma}
 \newtheorem{itproposition}{Proposition}
 \newtheorem{itdefinition}{Definition}
 \newtheorem{itremark}{Remark}
 \newtheorem{itclaim}{Claim}
 \newtheorem{itfact}{Fact}
 \newtheorem{itconjecture}{Conjecture}
 \newtheorem{itexample}{Example}
 \newenvironment{theorem}{\addtocounter{enunciato}{1}
 \begin{ittheorem}}{\end{ittheorem}}
 \newenvironment{corollary}{\addtocounter{enunciato}{1}
 \begin{itcorollary}}{\end{itcorollary}}
 \newenvironment{lemma}{\addtocounter{enunciato}{1}
 \begin{itlemma}}{\end{itlemma}}
 \newenvironment{proposition}{\addtocounter{enunciato}{1}
 \begin{itproposition}}{\end{itproposition}}
 \newenvironment{definition}{\addtocounter{enunciato}{1}
 \begin{itdefinition}}{\end{itdefinition}}
 \newenvironment{remark}{\addtocounter{enunciato}{1}
 \begin{itremark}}{\end{itremark}}
 \newenvironment{claim}{\addtocounter{enunciato}{1}
 \begin{itclaim}}{\end{itclaim}}
 \newenvironment{fact}{\addtocounter{enunciato}{1}
 \begin{itfact}}{\end{itfact}}
 \newenvironment{conjecture}{\addtocounter{enunciato}{1}
 \begin{itconjecture}}{\end{itconjecture}}
 \newenvironment{example}{\addtocounter{enunciato}{1}
 \begin{itexample}}{\end{itexample}}
 \newcommand{\be}[1]{\begin{equation}\label{#1}}
 \newcommand{\ee}{\end{equation}}
 \newcommand{\bl}[1]{\begin{lemma}\label{#1}}
 \newcommand{\el}{\end{lemma}}
 \newcommand{\br}[1]{\begin{remark}\label{#1}}
 \newcommand{\er}{\end{remark}}
 \newcommand{\bt}[1]{\begin{theorem}\label{#1}}
 \newcommand{\et}{\end{theorem}}
 \newcommand{\bd}[1]{\begin{definition}\label{#1}}
 \newcommand{\ed}{\end{definition}}
 \newcommand{\bcl}[1]{\begin{claim}\label{#1}}
 \newcommand{\ecl}{\end{claim}}
 \newcommand{\bfact}[1]{\begin{fact}\label{#1}}
 \newcommand{\efact}{\end{fact}}
 \newcommand{\bp}[1]{\begin{proposition}\label{#1}}
 \newcommand{\ep}{\end{proposition}}
 \newcommand{\bc}[1]{\begin{corollary}\label{#1}}
 \newcommand{\ec}{\end{corollary}}
 \newcommand{\bcj}[1]{\begin{conjecture}\label{#1}}
 \newcommand{\ecj}{\end{conjecture}}
 \newcommand{\bex}[1]{\begin{example}\label{#1}}
 \newcommand{\eex}{\end{example}}
 \newcommand{\bpr}{\begin{proof}}
 \newcommand{\epr}{\end{proof}}
 \newcommand{\bprt}[1]{\begin{proofoft}{\it\ref{#1}}.\,\,}
 \newcommand{\eprt}{\end{proofoft}}
 \newcommand{\bprc}[1]{\begin{proofofc}{\it\ref{#1}}.\,\,}
 \newcommand{\eprc}{\end{proofofc}}
 \newcommand{\bprp}[1]{\begin{proofofp}{\it\ref{#1}}.\,\,}
 \newcommand{\eprp}{\end{proofofp}}
 \newcommand{\bprl}[1]{\begin{proofofl}{\it\ref{#1}}.\,\,}
 \newcommand{\eprl}{\end{proofofl}}
 \newcommand{\bi}{\begin{itemize}}
 \newcommand{\ei}{\end{itemize}}
 \newcommand{\ben}{\begin{enumerate}}
 \newcommand{\een}{\end{enumerate}}
\newenvironment{proofoft}{\noindent 
{\bf Proof of Theorem\,}}{\hspace*{\fill}$\halmos$\medskip}
\newenvironment{proofofc}{\noindent 
{\bf Proof of Corollary\,}}{\hspace*{\fill}$\halmos$\medskip}
\newenvironment{proofofp}{\noindent 
{\bf Proof of Proposition\,}}{\hspace*{\fill}$\halmos$\medskip}
\newenvironment{proofofl}{\noindent 
{\bf Proof of Lemma\,}}{\hspace*{\fill}$\halmos$\medskip}
\newcommand{\halmos}{\rule{1ex}{1.4ex}}
\def \PP {{\mathbb P}}
\def \E {{\mathbb E}}
\def \N {{\mathbb N}}
\def \P {{\mathbb P}}
\def \R {{\mathbb R}}
\def \Z {{\mathbb Z}}
\def \cF {{\mathcal F}}
\newcommand{\one}{{\mathchoice {1\mskip-4mu\mathrm l}
         {1\mskip-4mu\mathrm l}
         {1\mskip-4.5mu\mathrm l}
         {1\mskip-5mu\mathrm l}}}
\newcommand{\ess}{ess\,sup}
\begin{document}

\title{The parabolic Anderson model in a dynamic\\ 
random environment: space-time ergodicity\\ 
for the quenched Lyapunov exponent}

\author{\renewcommand{\thefootnote}{\arabic{footnote}}
D.\ Erhard
\footnotemark[1]
\\
\renewcommand{\thefootnote}{\arabic{footnote}}
F.\ den Hollander
\footnotemark[2]
\\
\renewcommand{\thefootnote}{\arabic{footnote}}
G.\ Maillard
\footnotemark[3]
}

\footnotetext[1]{
Mathematical Institute, Leiden University, P.O.\ Box 9512,
2300 RA Leiden, The Netherlands,\\
{\sl erhardd@math.leidenuniv.nl}
}
\footnotetext[2]{
Mathematical Institute, Leiden University, P.O.\ Box 9512,
2300 RA Leiden, The Netherlands,\\
{\sl denholla@math.leidenuniv.nl}
}
\footnotetext[3]{
CMI-LATP, Aix-Marseille Universit\'e,
39 rue F. Joliot-Curie, F-13453 Marseille Cedex 13, France,\\
{\sl maillard@cmi.univ-mrs.fr}
}

\date{\today}
\maketitle


\begin{abstract}
We continue our study of the parabolic Anderson equation $\partial u(x,t)/\partial t 
= \kappa\Delta u(x,t) + \xi(x,t)u(x,t)$, $x\in\Z^d$, $t\geq 0$, where $\kappa \in 
[0,\infty)$ is the diffusion constant, $\Delta$ is the discrete Laplacian, and $\xi$
plays the role of a \emph{dynamic random environment} that drives the equation. The 
initial condition $u(x,0)=u_0(x)$, $x\in\Z^d$, is taken to be non-negative and bounded. 
The solution of the parabolic Anderson equation describes the evolution of a field of 
particles performing independent simple random walks with binary branching: particles 
jump at rate $2d\kappa$, split into two at rate $\xi \vee 0$, and die at rate $(-\xi) 
\vee 0$. 

We assume that $\xi$ is stationary and ergodic under translations in space and time, is 
not constant and satisfies $\E(|\xi(0,0)|)<\infty$, where $\E$ denotes expectation w.r.t.\ 
$\xi$. Our main object of interest is the \emph{quenched Lyapunov exponent} $\lambda_0
(\kappa) = \lim_{t\to\infty} \frac{1}{t}\log u(0,t)$. In earlier work \cite{GdHM11}, \cite{EdHM12} 
we established a number of basic properties of $\kappa\mapsto\lambda_0(\kappa)$ under 
certain mild space-time mixing and noisiness assumptions on $\xi$. In particular, we showed 
that the limit exists $\xi$-a.s., is finite and continuous on $[0,\infty)$, is globally Lipschitz on 
$(0,\infty)$, is not Lipschitz at $0$, and satisfies $\lambda_0(0) = \E(\xi(0,0))$ and 
$\lambda_0(\kappa) > \E(\xi(0,0))$ for $\kappa \in (0,\infty)$. 

In the present paper we show that $\lim_{\kappa\to\infty} \lambda_0(\kappa) =\E(\xi(0,0))$
under an additional space-time mixing condition on $\xi$ we call G\"artner-hyper-mixing. This 
result, which completes our study of the quenched Lyapunov exponent for general  $\xi$, shows 
that the parabolic Anderson model exhibits space-time ergodicity in the limit of large diffusivity. 
This fact is interesting because there are choices of $\xi$ that are G\"artner-hyper-mixing 
for which the \emph{annealed Lyapunov exponent} $\lambda_1(\kappa) = \lim_{t\to\infty} 
\frac{1}{t}\log \E(u(0,t))$ is infinite on $[0,\infty)$, a situation that is referred to as \emph{strongly 
catalytic behavior}. Our proof is based on a \emph{multiscale analysis} of $\xi$, in combination 
with discrete rearrangement inequalities for local times of simple random walk and spectral 
bounds for discrete Schr\"odinger operators.

\medskip\noindent
{\it MSC 2010.} Primary 60K35, 60H25, 82C44; Secondary 35B40, 60F10.\\
{\it Key words and phrases.} Parabolic Anderson equation, quenched Lyapunov exponent, 
large deviations, G\"artner-hyper-mixing, multiscale analysis, rearrangement inequalities, 
spectral bounds.\\
{\it Acknowledgment.} DE and FdH were supported by ERC Advanced Grant 267356 VARIS. 
\end{abstract}

\newpage


\section{Introduction and main theorem}
\label{S1}

A fair amount is known about the behavior as a function of underlying parameters of 
the \emph{annealed Lyapunov exponents} for the parabolic Anderson model in a dynamic 
random environment. For an overview we refer the reader to \cite{GdHM08}. The main 
motivation behind the present paper is to understand the behavior of the \emph{quenched
Lyapunov exponent}, which is much harder to deal with. Our ultimate goal is to arrive
at a full qualitative picture of the quenched Lyapunov exponent for \emph{general} dynamic
random environments subject to certain mild space-time mixing and noisiness
assumptions.

Section~\ref{S1.1} defines the parabolic Anderson model and recalls the main results 
from \cite{GdHM11,EdHM12}. Section~\ref{S1.2} contains our main theorem, which states
that the quenched Lyapunov exponent converges to the average value of the environment 
in the limit of large diffusivity. Section~\ref{def} contains definitions, whereas Section~\ref{S1.4} discusses the main theorem, provides 
the necessary background, and gives a brief outline of the rest of the paper.


\subsection{Parabolic Anderson model}
\label{S1.1}

The parabolic Anderson model is the partial differential equation
\begin{equation}
\label{pA}
\frac{\partial}{\partial t}u(x,t) = \kappa\Delta u(x,t) + \xi(x,t)u(x,t),
\qquad x\in\Z^d,\,t\geq 0.
\end{equation}
Here, the $u$-field is $\R$-valued, $\kappa\in [0,\infty)$ is the diffusion
constant, $\Delta$ is the discrete Laplacian acting on $u$ as
\begin{equation}
\label{dL}
\Delta u(x,t) = \sum_{{y\in\Z^d} \atop {\|y-x\|=1}} [u(y,t)-u(x,t)]
\end{equation}
($\|\cdot\|$ is the $l_1$-norm), while
\begin{equation}
\label{rf}
\xi = (\xi_t)_{t \geq 0} \mbox{ with } \xi_t = \{\xi(x,t) \colon\,x\in\Z^d\}
\end{equation}
is an $\R$-valued random field playing the role of a \emph{dynamic random environment}
that drives the equation. As initial condition for (\ref{pA}) we take 
\begin{equation}
\label{ic}
u(x,0) = u_0(x),\,x\in\Z^d, \mbox{ with } u_0  \mbox{ non-negative, not identically zero, 
and bounded}.
\end{equation}

One interpretation of (\ref{pA}) and (\ref{ic}) comes from \emph{population dynamics}. 
Consider the special case where $\xi(x,t) = \gamma\xi_*(x,t)-\delta$ with
$\delta,\gamma \in (0,\infty)$ and $\xi_*$ an $\N_0$-valued random field. 
Consider a system of two types of particles, $A$ (catalyst) and $B$ (reactant), 
subject to:
\begin{itemize}
\item[--]
$A$-particles evolve autonomously according to a prescribed dynamics with 
$\xi_*(x,t)$ denoting the number of $A$-particles at site $x$ at time $t$;
\item[--]
$B$-particles perform independent simple random walks at rate $2d\kappa$ 
and split into two at a rate that is equal to $\gamma$ times the number of 
$A$-particles present at the same location at the same time;
\item[--]
$B$-particles die at rate $\delta$;
\item[--]
the average number of $B$-particles at site $x$ at time $0$ is $u_{0}(x)$.
\end{itemize}
Then
\begin{equation}
\label{uint}
\begin{array}{lll}
u(x,t) 
&=& \hbox{the average number of $B$-particles at site $x$ at time $t$}\\
&& \hbox{conditioned on the evolution of the $A$-particles}.
\end{array}
\end{equation}

The $\xi$-field is defined on a probability space $(\Omega,\cF,\PP)$. Throughout the 
paper we assume that 
\begin{equation}
\label{staterg}
\begin{aligned}
&\blacktriangleright\quad \xi \mbox{ is \emph{stationary} and \emph{ergodic} under 
translations in space and time.}\\
&\blacktriangleright\quad \xi \mbox{ is \emph{not constant} and } \E(|\xi(0,0)|)<\infty.
\end{aligned}
\end{equation}

The formal solution of \eqref{pA} is given by the \emph{Feynman-Kac formula}
\begin{equation}
\label{FK}
u(x,t) = E_x\left(\exp\left\{\int_0^t \xi(X^\kappa(s),t-s)\,ds\right\}\,
u_0(X^\kappa(t))\right),
\end{equation}
where $X^\kappa=(X^\kappa(t))_{t \geq 0}$ is the continuous-time simple random walk 
jumping at rate $2d\kappa$ (i.e., the Markov process with generator $\kappa\Delta$), 
and $P_x$ is the law of $X^\kappa$ when $X^\kappa(0)=x$. In \cite{EdHM12} we proved 
the following:
\begin{itemize}
\item[(0)]
Subject to the assumption  that $\xi$-a.s.\ $s \mapsto \xi(x,s)$ is locally integrable for every $x$
and that $\E(e^{q\xi(0,0)})<\infty$ for all $q \geq 0$, \eqref{FK} is finite for all $x,t$ and is the solution 
of \eqref{pA}.
\end{itemize}

The \emph{quenched Lyapunov exponent} associated with (\ref{pA}) is defined as
\begin{equation}
\label{Lyapexploc1}
\lambda_0(\kappa) = \lim_{t\to\infty} \frac{1}{t} \log u(0,t).
\end{equation}
In \cite{GdHM11} we showed that $\lambda_0(0) = \E(\xi(0,0))$ and $\lambda_0(\kappa) 
> \E(\xi(0,0))$ for $\kappa \in (0,\infty)$ as soon as the limit in \eqref{Lyapexploc1}
exists. In \cite{EdHM12} we proved the following:
\begin{itemize} 
\item[(1)]
Subject to certain \emph{space-time mixing assumptions} on $\xi$, the limit in
\eqref{Lyapexploc1} exists $\xi$-a.s.\ and in $L^{1}(\PP)$, is $\xi$-a.s.\ constant, 
is finite, and does not depend on $u_0$ satisfying \eqref{ic}.
\item[(2)]
Subject to certain additional \emph{noisiness assumptions} on $\xi$, $\kappa \mapsto 
\lambda_0(\kappa)$ is continuous on $[0,\infty)$, is globally Lipschitz on $(0,\infty)$, 
and is not Lipschitz at $0$.
\end{itemize}


\subsection{Main theorem and examples}
\label{S1.2}

Our main result is the following.

\bt{largekappa}
If $u_0=\delta_0$ and $\xi$ is G\"artner-hyper-mixing, then 
\begin{equation}
\label{eq:largelimit}
\lim_{\kappa \to\infty} \lambda_0(\kappa) = \E(\xi(0,0)).
\end{equation}
\et

The definition of G\"artner-hyper-mixing is given in 
Definitions~\ref{deltagood}--\ref{Gartnerposhypmix} below. A weaker form of these definitions 
was introduced and exploited in \cite{EdHM12}. Here are two examples of $\xi$-fields that 
are G\"artner-hyper-mixing.

\bex{mixingex} {\rm (See \cite{EdHM12})}\\
{\rm (e1)} 
Let $Y=(Y_t)_{t\geq 0}$ be a stationary and ergodic $\R$-valued Markov process satisfying
\begin{equation}
\label{Marksup}
\E\left[e^{q\sup_{t\in[0,1]} |Y_t|}\right] < \infty \qquad \forall\, q\geq 0.
\end{equation}
Let $(Y(x))_{x\in\Z^d}$ be a field of independent copies of $Y$. Then $\xi$ given by  $\xi(x,t)=Y_t(x)$ 
is G\"artner-hyper-mixing.\\
{\rm (e2)} 
Let $\xi$ be the zero-range process with rate function $g\colon\N_0\to (0,\infty)$ given by
$g(k) = k^{\beta}$, $\beta \in (0,1]$, and transition probabilities given by simple 
random walk on $\Z^d$. If $\xi$ starts from the product measure $\pi_{\rho}$, 
$\rho\in (0,\infty)$, with marginals
\begin{equation}
\label{ZRmeasure}
\forall\,x\in\Z^d\colon\,\quad \pi_{\rho}\big\{\eta \in \N_0^{\Z^d}\colon\, \eta(x)=k\big\} =
\begin{cases}
\gamma\,\frac{\rho^{k}}{\prod_{l=1}^kg(l)}, &\mbox{ if $k>0$}, \\
\gamma,  &\mbox{ if $k=0$},
\end{cases}
\end{equation}
where $\gamma \in (0,\infty)$ is a normalization constant, then $\xi$ is G\"artner-hyper-mixing.
\eex

\noindent
(The proof in \cite{EdHM12} is without the supremum in \eqref{BRgood} below, but easily carries 
over by inspection.) Example (e1) includes independent spin-flips, example (e2) includes 
independent random walks.

We expect that most interacting particle systems are G\"artner-hyper-mixing, including such 
classical systems as the stochastic Ising model, the contact process, the voter model and the 
exclusion process. Since these are bounded random fields, conditions (a2) and (a3) in 
Definition~\ref{Gartnerposhypmix} below are redundant and only condition (a1) needs to be 
verified. Note that the constant $\delta$ in \eqref{BRgood} below was allowed to be choosen arbitrarily large in \cite{EdHM12}. However, in this work we assume that $\delta$ goes to zero in a certain way (see Definition \eqref{Gartnerposhypmix}) so that (a1) indeed becomes an issue. We will not tackle the problem of solving this issue for the above mentioned fields in the present paper.   


\subsection{Definitions}
\label{def}

Throughout the rest of this paper we assume without loss of generality that $\E(\xi(0,0)) = 0$.

For $a_1,a_2,N \in \N$, denote by $\Delta_N(a_1,a_2)$ the set of $(\Z^d\times\N)$-valued 
sequences $\{(x_i,k_i)\}_{i=1}^N$ that are increasing with respect to the lexicographic 
ordering of $\Z^d\times\N$ and are such that, for all $1\leq i<j\leq N$, 
\begin{equation}
\label{mod} 
x_j\equiv x_i \, (\mathrm{mod}\,a_1), \qquad k_j\equiv k_i \, (\mathrm{mod}\,a_2).
\end{equation}
For $A \geq 1$, $\alpha>0$, $R\in\N$, $x\in\Z^d$ and $k,b,c\in\N_0$, define the \emph{space-time blocks}
(see Fig.~\ref{pictFiniteness2})
\begin{equation}
\label{Bblocks}
\tilde{B}_R^{A,\alpha}(x,k;b,c) = \left(\prod_{j=1}^{d}\big[(x(j)-1-b)\alpha A^R,(x(j)+1+b)\alpha A^R\big)
\cap\Z^d\right)\times[(k-c)A^R,(k+1)A^R).
\end{equation}
Abbreviate $B_R^{A,\alpha}(x,k)=\tilde{B}_R^{A,\alpha}(x,k;0,0)$
and $B_R^{A}(x,k)=B_R^{A,1}(x,k)$, and define the \emph{space-blocks}
\begin{equation}
\label{Qbox}
Q_R^{A,\alpha}(x) = x + [0,\alpha A^R)^d\cap\Z^d.
\end{equation}

\begin{figure}
\begin{center}
\setlength{\unitlength}{0.25cm}
\begin{picture}(28,23)(0,2.5)
\put(-10,-3){\vector(1,0){43}}
\put(-10,-3){\vector(0,1){33}}
\put(-1.5,21.5){\dashbox{0.5}(3,4)[]{}}
\put(22.5,21.5){\dashbox{0.5}(3,4)[]{}}
\put(-1.5,1.5){\dashbox{0.5}(3,4)[]{}}
\put(22.5,1.5){\dashbox{0.5}(3,4)[]{}}
\put(-4.5,20.5){\dashbox{0.5}(6,7)[]{}}
\put(19.5,20.5){\dashbox{0,5}(6,7)[]{}}
\put(-4.5,0.5){\dashbox{0.5}(6,7)[]{}}
\put(19.5,0.5){\dashbox{0.5}(6,7)[]{}}
\put(-2,21){\framebox(6,6)[]{}}
\put(22,21){\framebox(6,6)[]{}}
\put(-2,1){\framebox(6,6)[]{}}
\put(22,1){\framebox(6,6)[]{}}
\put(-6,18){\framebox(10,12)[]{}}
\put(18,18){\framebox(10,12)[]{}}
\put(-6,-2){\framebox(10,12)[]{}}
\put(18,-2){\framebox(10,12)[]{}}
\put(-0.9,-3.5){\vector(1,0){6}}
\put(-0.9,-3,5){\vector(-1,0){6}}
\put(-4.5,-5.5){$(c+1)A^{R+1}$}
\put(-6.5,-2.5){$\circledast_1$}
\put(-6.5,9.5){$\circledast_2$}
\put(3.5,9.5){$\circledast_3$}
\put(3.5,-2.5){$\circledast_4$}
\put(-5,0){$\circledast_5$}
\put(-2.5,0.5){$\circledast_6$}
\put(34,-4){time}
\put(-14.5,29){space}
\put(11,24){\vector(1,0){6}}
\put(11,24){\vector(-1,0){6}}
\put(6,25){$(a_2-c-1)A^{R+1}$}
\put(-1,14){\vector(0,1){3}}
\put(-1,14){\vector(0,-1){3}}
\put(0,14){$(a_1-2b-2)A^{R+1}$}
\end{picture}
\vspace{2.3cm}
\caption{\small The dashed blocks are $R$-blocks, i.e., $B_R^{A}(x,k)$ (inner) and 
$\tilde{B}_R^{A}(x,k;b,c)$ (outer) for some choice of $A,x,k,b,c$. The solid blocks 
are $(R+1)$-blocks, i.e., $B_{R+1}^{A}(y,l)$ (inner) and $\tilde{B}_{R+1}^{A}(y,l;b,c)$ 
(outer) for some choice of $A,y,l,b,c$ such that these $(R+1)$-blocks contain the 
corresponding $R$-blocks. All these blocks belong to the same equivalence class. The symbols $\{\circledast_i\}_{i=1,2,3,4,5,6}$ represents
 the space-time coordinates $\circledast_1 = ((y-1-b)A^{R+1},(l-c)A^{R+1})$, $\circledast_2
 =((y+1+b)A^{R+1},(l-c)A^{R+1})$, $\circledast_3= ((y+1+b)A^{R+1},(l+1)A^{R+1})$, 
$\circledast_4 = ((y-1-b)A^{R+1},(l+1)A^{R+1})$, $\circledast_5=((x-1-b)A^R, (k-c)A^R)$, 
$\circledast_6= ((y-1)A^{R+1},lA^{R+1})$.}
\label{pictFiniteness2}
\end{center}
\vspace{-.5cm}
\end{figure}
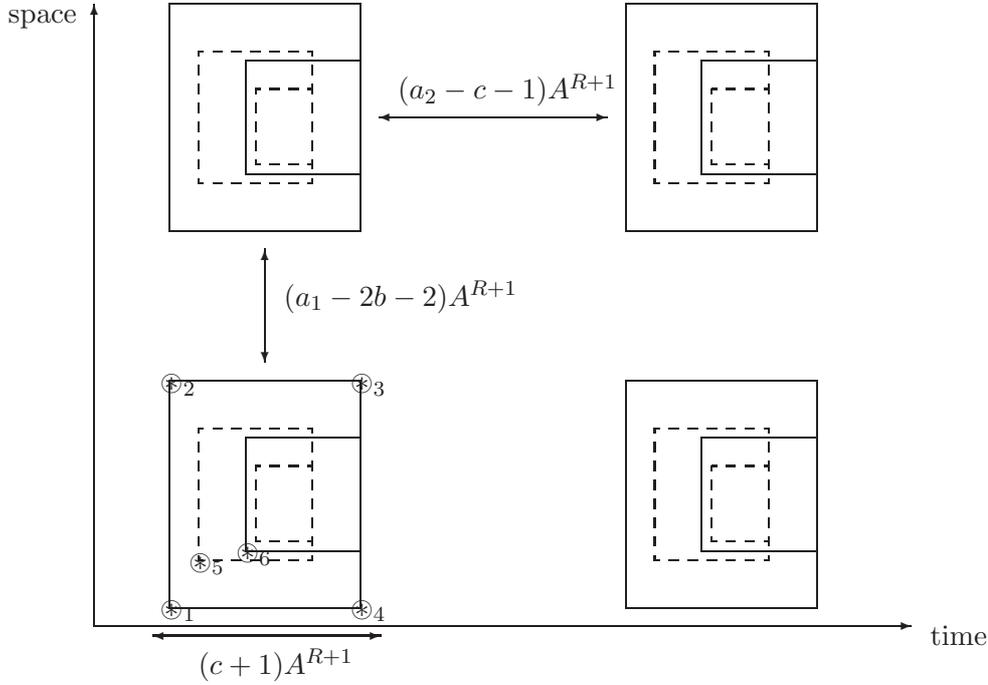

\bd{deltagood}{\rm {\bf [Good and bad blocks]}}\\
For $A \geq 1$, $\alpha>0$, $R \in \N$, $x \in \Z^d$, $m>0$, $k\in\N_0$, 
$\delta\in [0,\mathrm{\ess}\,[\xi(0,0)]]$ and $b,c\in\N_0$, the $R$-block $B_R^{A,\alpha}(x,k)$ is called $(\delta,b,c)$-good for the potential $\xi$ when, for all $s\in [(k-c)A^{R},(k+1)A^{R}-1/m)$,
\begin{equation}
\label{BRgood}
\frac{1}{|Q_R^{A,\alpha}(y)|}
\sum_{z \in Q_R^{A,\alpha}(y)} \sup_{r\in[s,s+1/m)}\xi(z,r) \leq \delta
\qquad \forall\,y\in\Z^d \colon\,
Q_R^{A,\alpha}(y)\times\{s\} \subseteq \tilde{B}_{R}^{A,\alpha}(x,k;b,c), 
\end{equation}
and is called $(\delta,b,c)$-bad otherwise.
\ed

For $A\geq 1$, $\alpha>0$, $R\in\N$, $x\in\Z^d$, $m >0$, $k\in\N_0$, $\delta\in
[0,\mathrm{\ess}\,[\xi(0,0)]]$ and $b,c\in\N_0$, let
\begin{equation}
\label{Aevent}
\begin{aligned}
&\mathcal{A}_R^{A,\alpha,\delta,m}(x,k;b,c)\\ 
&= \big\{\mbox{$B_{R+1}^{A,\alpha}(x,k)$ is $(\delta,b,c)$-good, 
but contains an $R$-block that is $(\delta,b,c)$-bad}\big\}.
\end{aligned}
\end{equation}

\bd{Gartnermix}{\rm {\bf [G\"artner-mixing]}}\\
The $\xi$-field is called $(A,\alpha,\delta,m,b,c)$-G\"artner-mixing when there are $a_1, a_2 \in \N$ 
such that
\begin{equation}
\label{wGartner}
\begin{aligned}
\sup_{(x_i,k_i)_{i=1}^N\in\Delta_N(a_1,a_2)}
\P\left(\bigcap_{i=1}^{N}\mathcal{A}_{R}^{A,\alpha, \delta,m}(x_i,k_i;b,c)\right)
\leq \big(A^{-4d(2d+1)(d+1)R}\big)^N \qquad \forall\,R\in\N,\,N\in\N.
\end{aligned}
\end{equation}
\ed

\bd{Gartnerposhypmix}{\rm {\bf [G\"artner-hyper-mixing]}}\\
The $\xi$-field is called G\"artner-hyper-mixing when the following conditions are satisfied:\\
{\rm (a1)}
There are $b,c\in\N_0$ and $K\geq 0$ such that for every $\delta >0$ there are 
$A_0>1$ and $m_0 >0$ such that $\xi\one\{\xi\geq K\}$ and $\xi$ are 
$(A,\alpha, \delta,m,b,c)$-G\"artner-mixing for all $A\geq A_0$, $m\geq m_0$ 
and all $\alpha\geq 1$, with $a_1,a_2$ in Definition~{\rm \ref{Gartnermix}} 
not depending on $A$, $m$ and $\alpha$.\\
{\rm (a2)} 
$\E[e^{q\sup_{s\in[0,1]} \xi(0,s)}] < \infty$ for all $q\geq 0$.\\
{\rm (a3)}
There are $R_0\in\N$ and $C_1\in[0,\mathrm{\ess}\,[\xi(0,0)]]$ such that 
\begin{equation}
\label{condexpgrowthB}
\P\left(\sup_{s \in [0,1]} \frac{1}{|B_R|}\sum_{y \in B_R} \xi(y,s) 
\geq C \right) \leq |B_R|^{-\alpha} \quad \forall\, R\geq R_0, C\geq C_1,
\end{equation}
for some $\alpha > [2d(2d+1)+1](d+2)/d$, where $B_R = [-R,R]^d\cap\Z^d$.
\ed


\subsection{Discussion}
\label{S1.4}

{\bf 1.}
What is interesting about Theorem~\ref{largekappa} is that it reveals a sharp 
contrast with what is known for the \emph{annealed Lyapunov exponent} 
\begin{equation}
\lambda_1(\kappa) = \lim_{t\to\infty} \frac{1}{t} \log \E(u(0,t)).
\end{equation} 
Indeed, there are choices of $\xi$ for which $\kappa \mapsto \lambda_1(\kappa)$ is 
everywhere infinite on $[0,\infty)$, a property referred to as \emph{strongly catalytic} 
behavior. For instance, as shown in \cite{GdH06}, if $\xi$ is $\gamma$ times a field 
of independent simple random walks starting in a Poisson equilibrium with arbitrary 
density, then this uniform divergence occurs in $d=1,2$ for $\gamma \in (0,\infty)$ 
and in $d \geq 3$ for $\gamma \in [1/G_d,\infty)$, with $G_d$ the Green function of 
simple random walk at the origin. By Example~\ref{mixingex}(e2) (with $\beta=1$),  this 
choice of $\xi$ is G\"artner-hyper-mixing.

\medskip\noindent
{\bf 2.} 
The annealed Lyapunov exponents 
\begin{equation}
\lambda_p(\kappa) = \lim_{t\to\infty} \frac{1}{t} \log \E([u(0,t)]^p), \qquad p \in \N,
\end{equation} 
were studied in detail in a series of papers where $\xi$ was chosen to evolve according to 
four specific interacting particle systems in equilibrium: independent Brownian motions, 
independent simple random walks, the simple symmetric exclusion process, and the voter 
model (for an overview, see \cite{GdHM08}). Their behavior turns out to be very different 
from that of $\lambda_0(\kappa)$. In \cite{EdHM12} it was conjectured that 
\begin{equation}
\lim_{\kappa\to\infty}[\lambda_p(\kappa)-\lambda_0(\kappa)]=0 \qquad \forall\,p\in\N
\end{equation}
because $\xi$ is ergodic in space and time. For the case where $\lambda_p(\kappa) \equiv 
\infty$ this statement is to be read as saying that $\lim_{\kappa\to\infty} \lambda_0
(\kappa)=\infty$. Theorem~\ref{largekappa} shows that this conjecture is false and that,
for $\xi$ G\"artner-hyper-mixing and satisfying conditions (0) and (2) in Section~\ref{S1.1}, 
the qualitative behavior of $\kappa\mapsto\lambda_0(\kappa)$ is as in Fig.~\ref{fig-lambda0}. 

\begin{figure}[htbp]
\vspace{0.7cm}
\begin{center}
\setlength{\unitlength}{0.25cm}
\begin{picture}(20,12)(1,1)
\put(0,2){\line(22,0){22}}
\put(0,2){\line(0,11){11}}
{\thicklines
\qbezier(0,2)(.4,8.8)(3,8)
\qbezier(3,8)(3.8,7.8)(5,6.6)
\qbezier(5,6.6)(9,2.6)(21,2.3)
}
\put(-1.3,1){$0$}
\put(0,2){\circle*{.35}}
\put(23,1.8){$\kappa$}
\put(-1,14){$\lambda_0(\kappa)$}
\end{picture}
\caption{\small Qualitative behavior of $\kappa\mapsto\lambda_0(\kappa)$.}
\label{fig-lambda0}
\end{center}
\vspace{-.3cm}
\end{figure}
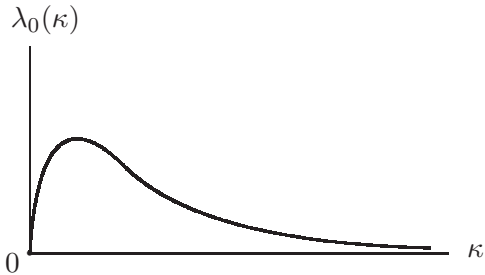

\medskip\noindent
{\bf 3.}
Our proof of Theorem~\ref{largekappa} is based on a \emph{multiscale analysis} of $\xi$,
in the spirit of \cite{KS03} and consists of two major steps: 
\begin{itemize}
\item[(I)]
We look at the bad $R$-blocks for all $R\in\N$. First we show that bad $R$-blocks are rare 
for large $R$. Next, using a \emph{discrete rearrangement inequality} for local times of simple 
random walk, we show that the contribution to the expectation in \eqref{FK} coming from bad 
$R$-blocks increases when we move them towards the origin. Therefore this contribution can 
be bounded from above by an expectation that pretends the bad $R$-blocks to be rearranged 
in a space-time cylinder around the origin. Since bad $R$-blocks are rare, this cylinder is narrow. 
Afterwards, because simple random walk is unlikely to spend a lot of time in narrow space-time 
cylinder, we are able to control the contribution coming from bad $R$-blocks to the expectation 
\eqref{FK} uniformly in $t$ and $\kappa$.
\item[(II)]
We look at the good $R$-blocks for all $R\in\N$. We control their contribution by using an 
\emph{eigenvalue expansion} of \eqref{FK}. An analysis of the largest eigenvalue in this 
expansion concludes the argument.     
\end{itemize}

\medskip
The remainder of this paper is organized as follows. In Section~\ref{S2} we formulate three 
key propositions and use these to prove Theorem~\ref{largekappa}. The three propositions 
are proved in Sections~\ref{S4}--\ref{S8}, respectively. In Appendix~\ref{Appendix} we prove 
two technical lemmas that are needed in Section~\ref{S5}, while in Appendix~\ref{Appendix*} 
we prove a spectral bound that is needed in Section~\ref{S8}.


\section{Three key propositions and proof of Theorem~\ref{largekappa}}
\label{S2}

To state our three key propositions we need some definitions. Fix $k_*\in\N$, 
and $t>0$. We say that $\Phi\colon\,[0,t]\to\Z^d$ is a path when
\begin{equation}
\label{path}
\|\Phi(s)-\Phi(s-)\| \leq 1 \qquad \forall\,s \in [0,t].
\end{equation}
Define the set of paths
\begin{equation}
\label{PiL}
\Pi(k_*,t,A) = \big\{\Phi\colon\,[0,t] \to \Z^d\colon\,\Phi\mbox{ crosses } 
k_* \mbox{ $1$-blocks}\big\}.
\end{equation} 

For $B_R\subseteq \Z^d \times [0,t]$, let $l_t(B_R)$ denote the local time of $X^{\kappa}$ in $B_R$ 
up to time $t$, and $l_t(\pi_1(B_R))$ the local time of $\pi_1(X^{\kappa})$ (the first coordinate of 
$X^{\kappa}$) in $\pi_1(B_R)$ up to time $t$. Furthermore, let $l_t(\mathrm{BAD}_{R}^{\delta}(\xi_K))$
and $l_t(\mathrm{BAD}^{\delta}(\xi))$ denote the local time of $X^\kappa$ in $(\delta,b,c)$-bad 
$R$-blocks up to time $t$ for the potential $\xi\one\{\xi\geq K\}$ and in $(\delta,b,c)$-bad $1$-blocks 
up to time $t$ for the potential $\xi$, respectively. Here and in the rest of the paper a bad $R$-block 
is $(\delta,b,c)$-bad for a choice of $K,\delta, b,c$ and some $A\geq A_0$, $m\geq m_0$, according 
to Definition~\ref{Gartnerposhypmix}.

In what follows, when we write sums like $\sum_{0 \leq k < t/A^R}$ or $\sum_{R=1}^{\varepsilon \log t}$ 
we will pretend that $t/A^R$ and $\varepsilon \log t$ are integer in order not to burden the notation with 
round off brackets. From the context it will always be clear where to place the brackets.

\bp{prop:FKest}
There is a $C_2>0$ such that for every $\varepsilon >0$ and $\delta>0$ there is an $A=A(\varepsilon)>3$, 
satisfying $\lim_{\varepsilon \downarrow 0} A(\varepsilon,\delta)=\infty$, such that $\xi$-a.s.\ for all 
$\kappa>0$ and all $t>0$ large enough,
\begin{equation}
\label{eq:FKinsuf}
\begin{aligned}
&E_0\Bigg(\exp\left\{\int_0^t\xi(X^{\kappa}(s),t-s)\, ds\right\}\Bigg) 
\leq e^{-t} + \Bigg[E_0\Bigg(\exp\left\{\int_0^{t}\overline{\xi}(X^{\kappa}(s),t-s)\, ds\right\}\Bigg)^{1/2}\\
&\qquad\qquad\times E_0\Bigg(\exp\left\{2\delta A^{d} l_t(\mathrm{BAD}^{\delta}(\xi))+ 
2\sum_{R=1}^{\varepsilon \log t} \delta A^{(R+1)d} l_t\big(\mathrm{BAD}_R^{\delta}(\xi_K)\big)\right\}\\
&\qquad\qquad\qquad\qquad\times
\one\big\{{\exists\, k_* \leq C_2\kappa t\colon\, X^{\kappa}\in\Pi(k_*,t,A)}\big\}\Bigg)^{1/2}\Bigg],
\end{aligned}
\end{equation}
where
\begin{equation}
\label{xibardef}
\overline{\xi}(x,s) = 2\xi(x,s)\one\big\{\xi(x,s)< \delta A^d, (x,s) 
\mbox{ is in a good $1$-block of }\xi\big\}.
\end{equation}
\ep

\bp{prop:boundedness}
There is a $C_2>0$ such that for every $\varepsilon, \tilde{\varepsilon}>0$ and $\delta>0$ there is an 
$A=A(\varepsilon,\tilde{\varepsilon},\delta)>3$, satisfying $\lim_{\tilde{\varepsilon} \downarrow 0} 
A(\varepsilon,\tilde{\varepsilon},\delta)=\infty$, such that $\xi$-a.s.\ for all $\kappa>0$ and all $t>0$ 
large enough,
\begin{equation}
\label{eq:controlbadblocks}
\begin{aligned}
&E_0\Bigg(\exp\left\{2\delta A^{d} l_t(\mathrm{BAD}^{\delta}(\xi))+ 
2\sum_{R=1}^{\varepsilon \log t} \delta A^{(R+1)d} l_t\big(\mathrm{BAD}_R^{\delta}(\xi_K)\big)\right\}\\
&\qquad\qquad\qquad\qquad\times
\one\big\{{\exists\, k_* \leq C_2\kappa t\colon\, X^{\kappa}\in\Pi(k_*,t,A)}\big\}\Bigg)
\leq e^{\tilde{\varepsilon}t}.
\end{aligned}
\end{equation}
\ep

\bp{prop:goodblocks}
There is a constant $C_3>0$ such that, for every $A>1$ and $\delta >0$,
\begin{equation}
\limsup_{\kappa\to\infty} \limsup_{n\to\infty} \frac{1}{An}
\log E_0\Bigg(\exp\left\{\int_0^{An}\overline{\xi}(X^{\kappa}(s),An-s)\, ds\right\}\Bigg)
\leq \frac{C_3}{A} +4\delta \qquad \xi-a.s.
\end{equation}
\ep

\noindent
Proposition~\ref{prop:FKest} estimates the Feynman-Kac formula in \eqref{FK} in terms 
of bad blocks and good blocks, Proposition~\ref{prop:boundedness} controls the contribution 
of bad block, while Proposition~\ref{prop:goodblocks} controls the contribution of good blocks.

We are now ready to prove Theorem~\ref{largekappa}.

\begin{proof}
Note that by Theorem 1.2(i) in \cite{GdHM11}, for all $\kappa\geq 0$ we have the 
lower bound $\lambda_0(\kappa)\geq 0$. Thus, it suffices to show the inequality in 
the reverse direction. To that end, fix $C_2, C_3>0$ according to 
Propositions~\ref{prop:FKest}--\ref{prop:goodblocks}, and fix $\varepsilon,\tilde{\varepsilon},
\delta>0$. According to Proposition~\ref{prop:boundedness}, there is an $A=A(\varepsilon,
\tilde{\varepsilon},\delta)$ such that, $\xi$-a.s.\ for all $\kappa>0$ and all $t$ of the form 
$t=An$ with $n\in\N$ large enough, the term in the left-hand side of (\ref{eq:controlbadblocks}) 
is bounded from above by $e^{\tilde{\varepsilon}An}$. According to Proposition~\ref{prop:goodblocks}, 
we have
\begin{equation}
\label{eq:applicationgoodblocks}
E_0\Bigg(\exp\left\{\int_0^{An}\overline{\xi}(X^{\kappa}(s),An-s)\, ds\right\}\Bigg)
\leq e^{4\delta An + C_3n + \chi(\kappa,n)}
\end{equation}
with $\limsup_{\kappa\to\infty} \limsup_{n\to\infty} \chi(\kappa,n)/n=0$. Proposition~\ref{prop:FKest}
therefore yields that, for all $\varepsilon,\tilde{\varepsilon},\delta>0$,
\begin{equation}
\label{eq:largekappa}
\limsup_{\kappa\to\infty} \lambda_0(\kappa) \leq \frac{C_3}{2A} + 2\delta +\frac{\tilde{\varepsilon}}{2}.
\end{equation}
Since $\lim_{\tilde{\varepsilon}\downarrow 0} A(\varepsilon,\tilde{\varepsilon},\delta) = \infty$ by 
Proposition \ref{prop:boundedness}, we get that for all $\delta>0$,
\begin{equation}
\label{eq:conclusion}
\limsup_{\kappa\to\infty} \lambda_0(\kappa) \leq 2\delta.
\end{equation}
Let $\delta \downarrow 0$ to get the claim.
\end{proof}


\section{Proof of Proposition~\ref{prop:FKest}}
\label{S4}

The proof is given in Section~\ref{S4.1} subject to Lemmas~\ref{lem:nobadblocks}--\ref{lem:nokblocks} 
below. The proof of these lemmas is given in Section~\ref{S4.2}.


\subsection{Proof of Proposition~\ref{prop:FKest} subject to two lemmas}
\label{S4.1}

For $A\geq 1$, $R\in\N$ and $\Phi \in \Pi(k_*,t,A)$, define
\begin{equation}
\begin{aligned}
\label{eq:Xi}
\Xi_R^{A}(\Phi)
&= \mbox{number of bad $R$-blocks crossed by $\Phi$},\\
\Xi_R^{A,k_*}
&= \sup_{\Phi \in \Pi(k_*,t,A)} \Xi_R^{A}(\Phi).
\end{aligned}
\end{equation}

\bl{lem:nobadblocks}
For every $\varepsilon>0$ there is an $A=A(\varepsilon)>3$ satisfying $\lim_{\varepsilon\downarrow 0}
A(\varepsilon)=\infty$ such that
\begin{equation}
\label{eq:nobadblocks}
\P\Big(\Xi_{R}^{A,k_*} > 0 \mbox{ for some } R\geq \varepsilon\log t \mbox{ and some }
k_*\in\N\Big)
\end{equation} 
is summable over $t\in\N$.  A possible choice is $A=e^{1/a\varepsilon[2d(2d+1)+1]}$ for some $a>1$.
\el

\bl{lem:nokblocks}
There is a $C_2>0$ such that $\xi$-a.s.\ for all $A>1$, all $t>0$ and all $\kappa>0$ large enough,
\begin{equation}
\label{eq:nokblocks}
E_0\Bigg(\exp\left\{ 
\int_0^t\xi(X^{\kappa}(s),t-s)\, ds\right\}
\one\big\{{\exists\, k_* > C_2\kappa t\colon\, X^{\kappa}\in\Pi(k_*,t,A)}\big\}\Bigg)
\leq e^{-t}.
\end{equation}
\el

We are now ready to prove Proposition \ref{prop:FKest}.

\bpr
Fix $C_2$ in accordance with Lemma \ref{lem:nokblocks} and $\varepsilon>0$. Let 
$\delta>0$ and fix $A>1$ according to Lemma \ref{lem:nobadblocks} such that 
$\delta A^d\geq K$, see Definition \ref{Gartnerposhypmix}. Note that
\begin{equation}
\label{FKsum}
\begin{aligned}
&E_0\left(\exp\left\{\int_0^t\xi(X^{\kappa}(s),t-s)\,ds\right\}
\one{\big\{\exists\, k_*\leq C_2\kappa t\colon\ X^{\kappa}\in \Pi(k_*,t,A)\big\}}\right)\\
&= E_0\Bigg(\exp\left\{\sum_{i=1}^{N(X^{\kappa},t)}
\int_{s_{i-1}}^{s_i}\xi(x_{i-1},t-u)\, du 
+ \int_{s_{N(X^{\kappa},t)}}^t \xi(x_{N(X^{\kappa},t)},t-u)\, du\right\}\\
&\qquad\qquad\qquad \times 
\one{\big\{\exists\, k_*\leq C_2\kappa t\colon\ X^{\kappa}\in \Pi(k_*,t,A)\big\}}\Bigg),
\end{aligned}
\end{equation}
where $N(X^{\kappa},t)$ is the number of jumps by $X^\kappa$ up to time $t$, 
$0=x_0,x_1,\dots,x_{N(X^{\kappa},t)}$ are the nearest-neighbor sites visited, and 
$0=s_0<s_1<\dots <s_{N(X^{\kappa},t)} \leq t$ are the jump times. To analyze (\ref{FKsum}), 
define
\begin{equation}
\label{eq:LambdaR}
\begin{aligned}
\Lambda_t(\mathrm{BAD}_R^{\delta}) 
&= \bigcup_{i=1}^{N(X^{\kappa},t)} \Big\{u \in [s_{i-1},s_{i})\colon\, 
\delta A^{Rd} < \xi(x_{i-1},t-u)\leq \delta A^{(R+1)d}\Big\} \\
&\qquad\qquad \bigcup \Big\{u\in [s_{N(X^{\kappa},t)},t)\colon\, 
\delta A^{Rd}< \xi(x_{N(X^{\kappa},t)},t-u)\leq \delta A^{(R+1)d}\Big\}.
\end{aligned}
\end{equation}
Then the contribution to the exponential in (\ref{FKsum}) may be bounded from above by
\begin{equation}
\label{contribution}
\int_0^t \xi(X^{\kappa}(s),t-s)\one\{\xi(X^{\kappa}(s),t-s) < \delta A^d\}\, ds 
+ \sum_{R\in\N} \delta A^{(R+1)d}\big|\Lambda_t\big(\mathrm{BAD}_R^{\delta}\big)\big|,
\end{equation}
By Definition~\ref{deltagood} and the fact that $\delta A^d\geq K$ (see the line preceding 
(\ref{FKsum})), if $\delta A^{Rd} < \xi(x_{i-1},t-u) \leq \delta A^{(R+1)d}$, then $(x_{i-1},t-u)$ 
belongs to a bad $R$-block for the potential $\xi\one\{\xi\geq K\}$. Hence
\begin{equation}
\label{LambdaRest}
\big|\Lambda_t\big(\mathrm{BAD}_R^{\delta}\big)\big| \leq l_t\big(\mathrm{BAD}_R^{\delta}(\xi_K)\big).
\end{equation}
To continue we write the indicator in (\ref{eq:LambdaR})
\begin{equation}
\label{eq:indicator}
\begin{aligned}
&\one\{\xi(X^{\kappa}(s),t-s)<\delta A^d, (X^{\kappa}(s),t-s) 
\mbox{ is in a good $1$-block of $\xi$}\}\\
&+\one\{\xi(X^{\kappa}(s),t-s)<\delta A^d, (X^{\kappa}(s),t-s) 
\mbox{ is in a bad $1$-block of $\xi$}\}
\end{aligned}
\end{equation}
By Lemma \ref{lem:nobadblocks} and our choice of $A$ at the beginning of the proof, $\xi$-a.s.\ for 
$t$ large enough there are no bad $R$-blocks with $R>\varepsilon \log t$. Thus, the expectation in 
the right-hand side of (\ref{FKsum}) may be estimated from above by
\begin{equation}
\label{eq:oldexpect}
\begin{aligned}
&E_0\Bigg(\exp\bigg\{\int_0^t\xi(X^{\kappa}(s),t-s)\\
&\quad\quad\times
\one\big\{\xi(X^{\kappa}(s),t-s)< \delta A^d,(X^{\kappa}(s),t-s)
\mbox{ is in a good $1$-block of $\xi$}\big\}\, ds\bigg\}\\
&\quad\quad\times
\exp\Bigg\{\delta A^dl_t(\mathrm{BAD}^{\delta}(\xi))+ \sum_{R=1}^{\varepsilon\log t}
\delta A^{(R+1)d}l_t(\mathrm{BAD}_R^{\delta}(\xi_K))\Bigg\}\\
&\quad\quad\times
\one\big\{\exists\ k_*\leq C_2\kappa t\colon\,
X^{\kappa}\in \Pi(k_*,t,A)\big\}\Bigg).
\end{aligned}
\end{equation}
Recall \eqref{xibardef}. An application of the Cauchy-Schwarz inequality yields the following upper bound 
for (\ref{eq:oldexpect}):
\begin{equation}
\label{eq:oldHolder}
\begin{aligned}
&E_0\Bigg(\exp\bigg\{\int_0^t\overline{\xi}(X^{\kappa}(s),t-s)
\, ds\bigg\}\Bigg)^{1/2}\\
&\times E_0\Bigg(\exp\Bigg\{2\delta A^dl_t(\mathrm{BAD}^{\delta}(\xi))+2\sum_{R=1}^{\varepsilon\log t}
\delta A^{(R+1)d}l_t(\mathrm{BAD}_R^{\delta}(\xi_K))\Bigg\}\\
&\qquad\qquad\qquad\qquad\qquad\qquad\times\one\big\{\exists\ k_*\leq C_2\kappa t\colon\,
X^{\kappa}\in \Pi(k_*,t,A)\big\}\Bigg)^{1/2}.
\end{aligned}
\end{equation}

The claim in \eqref{eq:FKinsuf} therefore follows by combining (\ref{FKsum}), 
(\ref{contribution}--\ref{LambdaRest}) and (\ref{eq:oldexpect})--(\ref{eq:oldHolder}) 
with Lemma~\ref{lem:nokblocks}.
\epr


\subsection{Proof of Lemmas~\ref{lem:nobadblocks}--\ref{lem:nokblocks} }
\label{S4.2}

\bpr
For the proof of Lemma~\ref{lem:nobadblocks}, see \cite[Lemma 3.3]{EdHM12}. To prove 
Lemma~\ref{lem:nokblocks}, use Cauchy-Schwarz to estimate the expectation in \eqref{eq:nokblocks}  
from  above by
\begin{equation}
\label{eq:nokblocksalt}
\Bigg[E_0\Bigg(\exp\left\{2\int_0^t\xi(X^{\kappa}(s),t-s)\, ds\right\}\Bigg)\Bigg]^{1/2}
\Bigg[P_0\Big(\exists\, k_* > C_2\kappa t\colon\, X^{\kappa}\in\Pi(k_*,t,A)\Big)\Bigg]^{1/2}.
\end{equation} 
To bound the first term in (\ref{eq:nokblocksalt}), note that by \cite[Eq.(3.54)]{EdHM12}  there is a 
$C>0$ such that $\xi$-a.s.\ for all $t,\kappa>0$,
\begin{equation}
\label{eq:oldresult}
E_0\Big(e^{2\int_0^t\xi(X^{\kappa}(s),t-s)\,ds}\Big) \leq e^{tC(\kappa +1)}.
\end{equation}
To bound the second term in (\ref{eq:nokblocksalt}) we use a similar strategy as for the
proof of Lemma~\ref{lem:crossing}.
Given $l_1, \ldots, l_{t/A}\in \N$, we say that $X^{\kappa}$ has label $(l_1,\ldots, l_{t/A})$
when $X^{\kappa}$ crosses $l_i$ $1$-blocks in the time interval $[(i-1)A,iA)$, $i\in\{1,\ldots, t/A\}$.
Fix $C_2>0$ and write 
\begin{equation}
\label{eq:partitionofkstar}
\begin{aligned}
&P_0\Bigg(X^{\kappa}\in\Pi(k_*,t,A) \mbox{ for some }k_*>C_2\kappa t\Bigg)\\
&= \sum_{j=1}^{\infty} P_0\Bigg(X^{\kappa}\in\Pi(k_*,t,A)
\mbox{ for some } k_*\in(jC_2\kappa t, (j+1)C_2\kappa t]\Bigg).
\end{aligned}
\end{equation}
For $j\in\N$, write $\sum_{(l_1^j,\ldots, l_{t/A}^j)}$ to denote the sum over all sequences $(l_1^j,\ldots, l_{t/A}^j)
\in\N^{t/A}$ with $jC_2\kappa t< \sum_{i=1}^{t/A} l_i^j \leq (j+1)C_2\kappa t$. Then each summand in (\ref{eq:partitionofkstar}) may, by an application of the Markov property, be rewritten as
\begin{equation}
\label{eq:labelsum}
\begin{aligned}
&\sum_{(l_1^j,\ldots, l_{t/A}^j)}E_0\Big(\one\{X^{\kappa} \mbox{ has label $(l_1^{j},\ldots, l_{t/A-1}^{j})$}\}
P_{X^{\kappa}(t-A)}\big(X^{\kappa} \mbox{ has label $l_{t/A}^{j}$}\big)\Big).
\end{aligned}
\end{equation}
Note that the number of jumps of a path $\Phi$ that visits $l_i^j$ $1$-blocks is at least $(l_i^j/2^d-1)A$. This 
is because for each $1$-block there are $(2^d-1)$ $1$-blocks with the same time coordinate at 
$l^{\infty}$-distance one. Hence, we may estimate (\ref{eq:labelsum}) from above by
\begin{equation}
\label{eq:jumpest}
\sum_{(l_1^j,\ldots, l_{t/A}^j)}E_0\Big(\one\{X^{\kappa} \mbox{ has label $(l_1^{j},\ldots, l_{t/A-1}^{j})$}\}\Big)
P_0\Big(N(X^{\kappa},A) \geq (l_i^j/2^d-1)A\Big),
\end{equation}
where $N(X^{\kappa},A)$ denotes the number of jumps of $X^{\kappa}$ in the time interval $[0,A)$.
An iteration of the arguments in (\ref{eq:labelsum}--\ref{eq:jumpest}), together with the tail estimate 
$P(\mathrm{POISSON}(\lambda)\geq k) \leq e^{-\lambda}(\lambda e)^{k}/k^k$, $k>2\lambda +1$, for 
Poisson random variables with mean $\lambda$, yields that for $C'>0$ large enough each summand in (\ref{eq:partitionofkstar}) is bounded from above by 
\begin{equation}
\label{eq:exitlblocks}
\begin{aligned}
&\sum_{(l_1^j,\ldots, l_{t/A}^j)}\prod_{l_i^j \geq \kappa C'}
P_0\Bigg(N(X^{\kappa},A) \geq (l_i^j/2^d-1)A\Bigg)\\
&\qquad \leq \sum_{(l_1^j,\ldots, l_{t/A}^j)}\prod_{l_i^j \geq \kappa C'} 
e^{-A2d\kappa}\exp\Big\{-(l_i^j/2^d-2)A \log([\kappa C'/2^d-2]/2d\kappa e)\Big\}.
\end{aligned}
\end{equation}
(It suffices to pick $C'$ such that $(C'/2^d-2)A\geq 4eAd\kappa +1$, which for $A>1$ and $\kappa >1$ 
is fulfilled when $C'\geq 2^d(4de +3)$.) Now note that if $C_2>2C'$, then for all $j\in\N$,
\begin{equation}
\label{eq:lowerboundonl}
\sum_{l_i^j \geq \kappa C'} l_i^j\geq \frac{jtC_2\kappa}{2}.
\end{equation}
Hence, inserting (\ref{eq:lowerboundonl}) into (\ref{eq:exitlblocks}), choosing $C_2$ large enough, and 
using the fact that for some $a,b\in (0,\infty)$ there are no more than $ae^{b\sqrt{C_2\kappa t}}$ such 
sequences $(l_1^j,  \ldots, l_{t/A}^j)$ (see \cite{HR18} or \cite{E42}), we get that for some $C''>0$ the 
left-hand side of (\ref{eq:partitionofkstar}) is bounded from above by $e^{-C''\kappa t}$. Inserting this 
bound into (\ref{eq:nokblocksalt}), using that $C''\to\infty$ as $C_2\to\infty$, and using (\ref{eq:oldresult}), 
we get the claim.
\epr


\section{Proof of Proposition \ref{prop:boundedness}}
\label{S5}

Proposition~\ref{prop:boundedness} is proved in Section~\ref{S5.2} subject to 
Propositions~\ref{prop:badintervals}--\ref{prop:localTime} below, which are stated 
in Section~\ref{S5.1} and proved in Sections~\ref{S6}--\ref{S7}.


\subsection{Two more propositions}
\label{S5.1}

Endow $\Z$ with the ordering $0\prec 1\prec -1\prec 2\prec -2\prec 3\prec  \cdots$. We say 
that two functions $f,g\colon\, \Z \to \R$ are equimeasurable when
\begin{equation}
\label{eq:niveauset}
|\{x\in\Z\colon\, f(x)> \lambda\}| = |\{x\in\Z\colon\, g(x)> \lambda\}| 
\qquad \forall\,\lambda \geq 0. 
\end{equation}
The \emph{symmetric decreasing rearrangement} of a function $f\colon\,\Z\to\R$ is defined to 
be the unique non-increasing function $f^{\sharp}\colon\,\Z \to \R$ that is equimeasurable with 
$f$. Given $A\subseteq \Z$, $A^{\sharp}\subseteq \Z$ is defined to be the unique set such that 
$(\one_A)^{\sharp} = \one_{A^{\sharp}}$. 

For $B\subseteq \Z^d\times [0,t]$, let $\pi_1(B)\subseteq\Z\times [0,t]$ be the projection of the spatial
part of $B$ onto its first spatial coordinate. Its one-dimensional symmetric decreasing rearrangement 
is the set
\begin{equation}
\label{eq:onedimrearrange}
\pi_1(B)^{\sharp} = \bigcup_{s\in [0,t]} 
\Big(\big\{x\in\Z\colon\,(x,s) \in \pi_1(B)\big\}^{\sharp} \times\{s\}\Big).
\end{equation}
For $A\geq 1$ and $R\in\N$, an \emph{$R$-interval} is a time-interval of the form $[kA^R, (k+1)A^R)$, 
$0 \leq k<t/A^{R}$. To make the proof more accessible, we no longer distinguish between badness 
referring to $\xi\one\{\xi\geq K\}$ and badness referring to $\xi$. Since both potentials satisfy the same 
mixing assumption {\rm(a1)} it will be clear from the proof that this does not affect the result.

\bp{prop:badintervals}
Let $\Phi\in \Pi(k_*,t,A)$. Then for all $A$ large enough there is a sequence $(\delta_R)_{R\in\N}$ 
in $(0,\infty)$ satisfying
\begin{equation}
\label{eq:deltaR}
\sum_{R\in\N} A^{Rd}\sqrt{\delta_R} <\infty
\end{equation}
such that $\xi$-a.s.\ the number of $R$-intervals in which $\Phi$ crosses more than $\delta_R k_*/(t/A)$ 
bad $R$-blocks is bounded from above by $\sqrt{\delta_R} t/A^{R}$. A possible choice is 
$\delta_R=K_1A^{-8d^2/3}A^{-4d(2d+1)R/3}$ for some $K_1>0$ not depending on $A$ and $R$.
\ep

\bp{prop:localTime}
For every $\varepsilon,t >0$, every sequence $(B_R)_{R\in\N}$ in $\Z^d\times [0,t]$ and every 
sequence $(C_R)_{R\in\N}$ in $[0,\infty)$ (see Fig.~{\rm \ref{pictRearrangement}}), 
\begin{equation}
\label{eq:symmest}
E_0\Bigg(\exp\Bigg\{\sum_{R=1}^{\varepsilon \log t}C_R l_t(B_{R})\Bigg\}\Bigg) \leq 
E_0\Bigg(\exp\Bigg\{\sum_{R=1}^{\varepsilon \log t}C_R l_t(\pi_1(B_{R})^{\sharp})\Bigg\}\Bigg).
\end{equation}
\ep

\begin{figure}[htbp]
\centering
\vspace{0.5cm}
\begin{subfigure}[b]{0.3\textwidth}
\centering
\setlength{\unitlength}{0.15cm}
\begin{picture}(28,23)(0,2.5)
\put(-13,9){\vector(1,0){38}}
\put(-13,-3){\vector(0,1){28}}
\put(-12.5,1.5){\framebox(3,3)}
\put(-12.5,18.5){\framebox(3,3)}
\put(-9.5,-2){\framebox(3,3)}
\put(-6.5,15){\framebox(3,3)}
\put(-6.5,22){\framebox(3,3)}
\put(-6.5,2){\framebox(3,6)}
\put(3.5,2.5){\framebox(3,3)}
\put(12.5,20){\framebox(3,6)}
\put(12.5,0){\framebox(3,6)}
\put(12.5,9){\framebox(3,3)}
\put(22,6){time}
\put(-20,23.5){space}
\end{picture}
\end{subfigure}
\hspace{1cm}
\begin{subfigure}[b]{0.3\textwidth}
\centering
\setlength{\unitlength}{0.15cm}
\begin{picture}(28,23)(0,2.5)
\put(0,9){\vector(1,0){38}}
\put(0,-3){\vector(0,1){28}}
\put(0.5,6){\framebox(3,6)}
\put(3.6,7.5){\framebox(2.8,3)}
\put(6.5,3){\framebox(3,12)}
\put(15.5,7.5){\framebox(3,3)}
\put(24.5,1.5){\framebox(3,15)}
\put(35,6){time}
\put(-7,23.5){space}
\end{picture}
\end{subfigure}
\vspace{1.5cm}
\caption{\small The picture on the left shows a configuration of space-time blocks before its rearrangement, 
the picture on the right after its rearrangement. Note that in each time-interval the total space volume of the 
blocks is the same in both configurations.}
\label{pictRearrangement}
\end{figure}
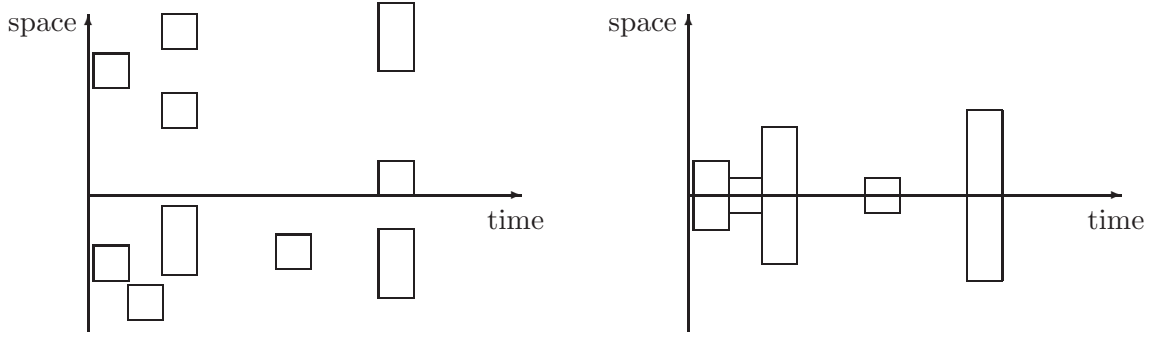

\subsection{Proof of Proposition \ref{prop:boundedness} subject to two propositions}
\label{S5.2}

\begin{proof}
Fix $\varepsilon >0$ and $A\geq 1$ according to Propositions~\ref{prop:FKest} and \ref{prop:badintervals}, 
and fix $\tilde{\varepsilon}>0$. The proof comes in 6 steps.

\medskip\noindent
{\bf 1.}
We begin by introducing some more notation. Define the space-time blocks
\begin{equation}
\label{eq:kappablock}
B_R^A(x,k;\kappa) = 
\Bigg(\prod_{j=1}^{d} [\sqrt{\kappa}(x(j)-1)A^{R},\sqrt{\kappa}(x(j)+1)A^{R})\cap\Z^d\Bigg)
\times [kA^{R},(k+1)A^{R}),
\end{equation}
which we call $(\kappa,R)$-blocks. These blocks are the same as $\tilde{B}_R^{A,\alpha}(x,k;0,0)=B_R^{A,\alpha}(x,k)$ 
in \eqref{Bblocks} with $\alpha=\sqrt{\kappa}$. Let 
$k_*(\kappa)$ denote the number of $(\kappa,1)$-blocks that are crossed by $X^{\kappa}$. For 
$k_*(\kappa)\in\N$  and $(x_i,k_i)_{0\leq i<k_*(\kappa)} \in (\Z^d\times \N)^{k_*(\kappa)}$,  write
\begin{equation}
\label{eq:fixedunion}
\bigcup_{(x_i,k_i)_{0\leq i<k_*(\kappa)}} B_1^{A}(x_i,k_i;\kappa)
\end{equation}
to denote the union over all the $(\kappa,1)$-blocks $B_1^{A}(x_i,k_i;\kappa)$, $0\leq i < k_*(\kappa)$.  
Likewise, write
\begin{equation}
\label{eq:sumoverblocks}
\sum_{(B_{1}^{A}(x_i,k_i;\kappa))_{0\leq i<k_*(\kappa)}}
\end{equation}
to denote the sum over all possible sequences of $(\kappa,1)$-blocks $B_1^{A}(x_i,k_i;\kappa)$, 
$0\leq i<k_*(\kappa)$ that can be crossed by a path $\Phi$. Finally, define
\begin{equation}
\label{eq:badselection}
\mathrm{BAD}_{R}^{\delta}((x_i,k_i)_{0\leq i< k_*(\kappa)})\\
= \bigg\{B_{R}^{A}(x,k)\colon\,B_{R}^{A}(x,k) \mbox{ is bad and intersects the union in 
(\ref{eq:fixedunion})}\bigg\}.
\end{equation}

\medskip\noindent
{\bf 2.}
We write $l_t(\mathrm{BAD}_R^{\delta})$ for the local time of $X^{\kappa}$ in $(\delta,b,c)$-bad 
$R$-blocks up to time $t$, where badness refers both to $\xi\one\{\xi\geq K\}$ and $\xi$. By 
(\ref{eq:controlbadblocks}), it is enough to show that for all $\kappa$ and $t$ large enough, 
\begin{equation}
\label{eq:tocontrol}
E_0\Bigg(\exp\left\{ 
4\sum_{R=1}^{\varepsilon \log t} \delta A^{(R+1)d} l_t\big(\mathrm{BAD}_R^{\delta}\big)\right\}
\one{\big\{\exists\, k_* \leq C_2\kappa t\colon\, X^{\kappa}\in\Pi(k_*,t,A)}\big\}\Bigg) 
\leq e^{\tilde{\varepsilon} t}.
\end{equation}
Recall (\ref{PiL}) to note that the left-hand side of (\ref{eq:tocontrol}) equals
\begin{equation}
\label{eq:sumoverk}
\sum_{k_*=t/A}^{C_2\kappa t}
E_0\Bigg(\exp\left\{ 
4\sum_{R=1}^{\varepsilon \log t} \delta A^{(R+1)d} l_t\big(\mathrm{BAD}_R^{\delta}\big)\right\}
\one\big\{{X^{\kappa} \mbox{ crosses $k_*$ $1$-blocks}}\big\}\Bigg).
\end{equation}
To prove \eqref{eq:tocontrol}, we attempt to apply Proposition \ref{prop:localTime}. To that end, for each $k_*$
we must sum over all configurations of $k_*$ $1$-blocks that may be crossed by $X^{\kappa}$. However, this 
sum is difficult to control, and therefore we do an additional \emph{coarse-graining of space-time} by considering 
$(\kappa,R)$-blocks instead of $R$-blocks. To that end we first note that there is a $C_4>0$ such that $k_*(\kappa) 
\leq C_4k_*/\sqrt{\kappa} +2t/A$ (see Lemma~\ref{lem:crossedblocks} in Section~\ref{S6.4} for a similar statement). 
To see why, note that if $\Phi$ crosses $l_i \leq \sqrt{\kappa}$ $1$-blocks in the time-interval $[(i-1)A,iA)$, 
$1\leq i \leq t/A$, then it crosses  $l_i^{\kappa} \leq 2$ $(\kappa,1)$-blocks in the same time-interval. Moreover, 
if $j\sqrt{\kappa}+1 \leq l_i \leq (j+1)\sqrt{\kappa}$ for some $j\in\N$, then $l_i^{\kappa} \leq j+2\leq (j+2)l_i/
j\sqrt{\kappa}$. Hence, the total number of $(\kappa,1)$-blocks that may be crossed by  $\Phi$ is bounded 
from above by
\begin{equation}
\label{eq:crossedkappablocks}
\sum_{i=1}^{t/A} l_i^{\kappa} \leq  \sum_{\substack{1 \leq i \leq t/A \\ l_i\leq \sqrt{\kappa}}} 2
+ \sum_{\substack{1 \leq i \leq t/A \\ l_i\geq\sqrt{\kappa}+1}} \frac{3l_i}{\sqrt{\kappa}} \leq 2t/A + \frac{3k_*}{\sqrt{\kappa}}.
\end{equation}
Thus, (\ref{eq:sumoverk}) is bounded from above by
\begin{equation}
\label{eq:sumoverkkappa}
\sum_{k_*(\kappa)=t/A}^{C_2C_4\sqrt{\kappa}t+2t/A}
E_0\Bigg(\exp\left\{ 
4\sum_{R=1}^{\varepsilon \log t} \delta A^{(R+1)d} l_t\big(\mathrm{BAD}_R^{\delta}\big)\right\}
\one\big\{{X^{\kappa} \mbox{ crosses $k_*(\kappa)$ $(\kappa,1)$-blocks}}\big\}\Bigg).
\end{equation}
To analyze (\ref{eq:sumoverkkappa}), fix $k_*(\kappa) \in [t/A,C_2C_4\sqrt{\kappa}t+2t/A]$. Summing over 
all possible ways to cross $k_*(\kappa)$ $(\kappa,1)$-blocks and recalling (\ref{eq:badselection}), we may 
estimate each summand in (\ref{eq:sumoverkkappa}) by
\begin{equation}
\label{eq:sumoverB}
\begin{aligned}
\sum_{(B_{1}^{A}(x_i,k_i;\kappa))_{0\leq i<k_*(\kappa)}}
E_0\Bigg(\exp&\left\{ 
4\sum_{R=1}^{\varepsilon \log t} \delta A^{(R+1)d} 
l_t\Big(\mathrm{BAD}_{R}^{\delta}\big((x_i,k_i)_{0\leq i< k_*(\kappa)}\big)\Big)\right\}\\
&\times\one\big\{{X^{\kappa} \mbox{ crosses }B_{1}^{A}(x_i,k_i;\kappa), 0\leq i< k_*(\kappa)}\big\}\Bigg).
\end{aligned}
\end{equation}
By Cauchy-Schwarz, (\ref{eq:sumoverB}) is at most
\begin{equation}
\label{eq:Holderbound}
\begin{aligned}
&\sum_{(B_{1}^{A}(x_i,k_i;\kappa))_{0\leq i<k_*(\kappa)}}
\Bigg[E_0\Bigg(\exp\left\{ 
8\sum_{R=1}^{\varepsilon \log t} \delta A^{(R+1)d} 
l_t\Big(\mathrm{BAD}_{R}^{\delta}\big((x_i,k_i)_{0\leq i< k_*(\kappa)}\big)
\Big)\right\}\Bigg)\Bigg]^{1/2}\\
&\qquad\qquad \times \left[P_0\Big(X^{\kappa} \mbox{ crosses } 
B_{1}^{A}(x_i,k_i,\kappa), 0\leq i< k_*(\kappa)\Big)\right]^{1/2}.
\end{aligned}
\end{equation}

\medskip\noindent
{\bf 3.}
By Proposition \ref{prop:localTime}, the first factor in the summand of (\ref{eq:Holderbound}) is 
not more than
\begin{equation}
\label{eq:appofrearrange}
\Bigg[E_0\Bigg(\exp\left\{ 
8\sum_{R=1}^{\varepsilon \log t} \delta A^{(R+1)d} l_t
\Big(\pi_1\big(\mathrm{BAD}_{R}^{\delta}((x_i,k_i)_{0\leq i< k_*(\kappa)})\big)^{\sharp}
\Big)\right\}\Bigg)\Bigg]^{1/2}.
\end{equation}
Next, if $X^{\kappa}$ crosses $k_*(\kappa)$ $(\kappa,1)$-blocks $B_{1}^{A}(x,k;\kappa)$, then a trivial 
counting estimate yields that $X^{\kappa}$ crosses at most $k_*(\kappa)\sqrt{\kappa}$ $1$-blocks.
Therefore, by Proposition \ref{prop:badintervals}, the number of $R$-intervals in which $X^{\kappa}$ 
crosses more than $\delta_R k_*(\kappa)\sqrt{\kappa}A/t$ bad $R$-blocks is bounded from above by 
$\sqrt{\delta_R} t/A^{R}$. We call these $R$-intervals $R$-atypical. Similarly, an $R$-interval is called
$R$-typical, if the number of bad $R$-blocks crossed by $X^{\kappa}$ is bounded by $\delta_R k_*(\kappa)
\sqrt{\kappa}A/t$. Define
\begin{equation}
\label{eq:Rstar}
R^{*}(k_*(\kappa)) = \max\big\{R\in\N\colon\, \delta_R k_*(\kappa)\sqrt{\kappa} A/t \geq 1\big\}.
\end{equation}
If $R>R^{*}(k_*(\kappa))$, then there are no bad $R$-blocks in $R$-typical intervals. (By the choice of 
$R$, their number is strictly less than one and therefore is zero.) Hence the local time in bad $R$-blocks 
is determined by the local time in bad $R$-blocks, which lie in $R$-atypical intervals. Consequently,
\begin{equation}
\label{eq:largeRlocal}
l_t\Big(\pi_1\big(\mathrm{BAD}_{R}^{\delta}((x_i,k_i)_{0\leq i<k_*(\kappa)})\big)^{\sharp}\Big)
\leq (\sqrt{\delta_R}t/A^R)A^R = \sqrt{\delta_R} t.
\end{equation}
On the other hand, if $1\leq R\leq R^*(k_*(\kappa))$ (see Fig.~\ref{pictAtypicalintervals}),  then there is 
a contribution coming from $R$-typical intervals as well, and so
\begin{equation}
\label{eq:symlocaltime}
l_t\Big(\pi_1\big(\mathrm{BAD}_{R}^{\delta}((x_i,k_i)_{0\leq i< k_*(\kappa)})\big)^{\sharp}\Big)
\leq \sqrt{\delta_R} t + l_t(\widetilde{B}_R(k_*(\kappa))),
\end{equation}
where
\begin{equation}
\label{eq:Btilde}
\widetilde{B}_R(k_*(\kappa)) = \bigg(\Big[-\tfrac{1}{2}A^{R}\delta_{R} k_*(\kappa)\sqrt{\kappa}A/t, 
\tfrac{1}{2}A^{R}\delta_{R} k_*(\kappa)\sqrt{\kappa}A/t\Big)\cap\Z\bigg) \times[0,t].
\end{equation}
Hence, (\ref{eq:appofrearrange}) is bounded from above by
\begin{equation}
\label{eq:upperbound}
\Bigg[E_0\Bigg(\exp\Bigg\{ 
8\sum_{R=1}^{R^*(k_*(\kappa))} \delta A^{(R+1)d} 
l_t\big(\widetilde{B}_R(k_*(\kappa))\big)\Bigg\}\Bigg)\Bigg]^{1/2}
\exp\Bigg\{4\sum_{R\in\N} \delta A^{(R+1)d}\sqrt{\delta_R}t\Bigg\}.
\end{equation}
For $A$ large enough, by Proposition~\ref{prop:badintervals} and the specific choice of $(\delta_R)_{R\in\N}$ in 
Propostion~\ref{prop:badintervals}, the sum in the second term is $\leq \tilde{\varepsilon}t/2$.

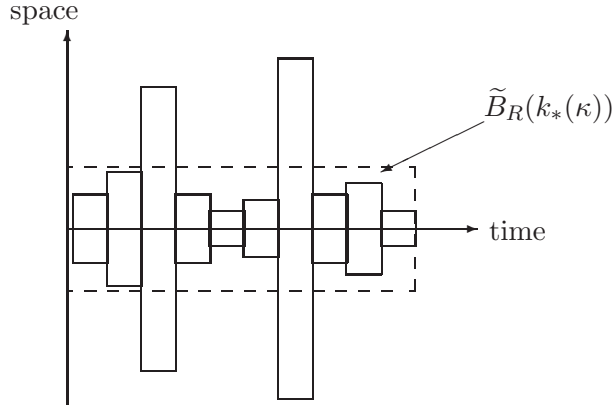
\begin{figure}
\centering
\vspace{0.5cm}
\begin{center}
\setlength{\unitlength}{0.15cm}
\begin{picture}(28,23)(0,2.5)
\put(-10,12.5){\vector(1,0){36}}
\put(-10,-3){\vector(0,1){33}}
\put(-9.5,9.5){\framebox(3,6)}
\put(-6.5,7.5){\framebox(3,10)}
\put(-3.5,0){\framebox(3,25)}
\put(-0.5,9.5){\framebox(3,6)}
\put(2.5,11){\framebox(3,3)}
\put(5.5,10){\framebox(3,5)}
\put(8.5,-2.5){\framebox(3,30)}
\put(11.5,9.5){\framebox(3,6)}
\put(14.5,8.5){\framebox(3,8)}
\put(17.5,11){\framebox(3,3)}
\put(-10,7){\dashbox(30.5,11)}
\put(27,11.5){time}
\put(-15,31){space}
\put(26.5,22){\vector(-2,-1){9}}
\put(26.5,22.5){$\widetilde{B}_R(k_*(\kappa))$}
\end{picture}
\vspace{1cm}
\caption{\small The picture shows a possible configuration of bad $R$-blocks after its rearrangement. 
There are two time-intervals in which the number of bad $R$-blocks is atypically large, i.e., larger than 
$\delta_R k_*(\kappa)\sqrt{\kappa}A/t$. The local time in these bad $R$-blocks can be bounded from 
above by the total length of these time-intervals, which is at most $\sqrt{\delta_R}t$. The local time of 
the bad $R$-blocks in the other time-intervals can be bounded from above by the local time of the 
enveloping dashed block, i.e., $\widetilde{B}_R(k_*(\kappa))$. }
\label{pictAtypicalintervals}
\end{center}
\end{figure}

\medskip\noindent
{\bf 4.} To estimate the first factor in \eqref{eq:upperbound} and control the second factor in the summand 
of \eqref{eq:Holderbound}, we need the following two lemmas whose proof is deferred to 
Appendix~\ref{Appendix}.

\begin{lemma}
\label{lem:LDPest}
Let $X^{\kappa}$ be simple random walk on $\Z$ with step rate $2\kappa$. There is a $K_2>0$ such that 
for all $\kappa>0$, all $n\in\N$, all $\beta_1,\beta_2,\ldots,\beta_n \geq 0$ and all nested finite intervals 
$\emptyset = I_0\subseteq I_1\subseteq I_2 \subseteq\cdots \subseteq I_n\subseteq\Z$,
\begin{equation}
\label{eq:localtimelem}
\log E_0\left(\exp\left\{\sum_{i=1}^{n}\beta_i \sum_{x \in I_i}
l_t(X^\kappa,x)\right\}\right)\\ 
\leq \frac{K_2t}{\sqrt{\kappa}}
\sum_{i=1}^{n}\Bigg[|I_i\setminus I_{i-1}|
\bigg(\sum_{j=i}^{n}\beta_j\bigg)^{3/2}\Bigg] +o(t),
\end{equation}
where $l_t(X^\kappa,x)$ is the local time of $X^\kappa$ at site $x$ up to time $t$.
\end{lemma}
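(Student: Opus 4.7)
The plan is to recognise the left-hand side as a Feynman--Kac expectation with a piecewise-constant non-negative potential, and then bound it via a one-dimensional discrete Lieb--Thirring type estimate for the top of the spectrum of $\kappa\Delta+V$ on $\ell^2(\Z)$. Setting $V(x):=\sum_{i:\,x\in I_i}\beta_i$, one has $\sum_{i=1}^n\beta_i\sum_{x\in I_i}l_t(X^\kappa,x)=\int_0^t V(X^\kappa(s))\,ds$, and the nestedness of the $I_i$ implies $V(x)=\gamma_i:=\sum_{j=i}^n\beta_j$ on $I_i\setminus I_{i-1}$, so $\sum_{x\in\Z}V(x)^{3/2}=\sum_{i=1}^n|I_i\setminus I_{i-1}|\gamma_i^{3/2}$. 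The statement therefore reduces to showing
$$\log E_0\Bigl(\exp\Bigl\{\int_0^t V(X^\kappa(s))\,ds\Bigr\}\Bigr)\;\leq\;\frac{K_2\,t}{\sqrt{\kappa}}\sum_{x\in\Z}V(x)^{3/2}+o(t).$$

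First I would establish a Feynman--Kac/spectral upper bound. Let $\mathcal{L}:=\kappa\Delta+V$, viewed as a bounded self-adjoint operator on $\ell^2(\Z)$, and fix $M_t:=t^2$. Splitting paths according to whether $X^\kappa$ stays in the box $[-M_t,M_t]$ up to time $t$, a standard Gaussian tail bound for continuous-time simple random walk shows that the exit contribution is super-exponentially small, while on the complementary event the Feynman--Kac integrand is the Dirichlet-killed heat kernel $e^{t\widetilde{\mathcal{L}}}(0,y)$. Cauchy--Schwarz in $\ell^2$, together with $\|e^{t\widetilde{\mathcal{L}}}\delta_0\|_2\leq e^{t\lambda_{\max}(\widetilde{\mathcal{L}})}\leq e^{t\lambda(\kappa,V)}$ (monotonicity of the Rayleigh quotient under subspace inclusion, with $\lambda(\kappa,V):=\sup\sigma(\mathcal{L})$), yields
$$\sum_{y\in[-M_t,M_t]}e^{t\widetilde{\mathcal{L}}}(0,y)\;\leq\;(2M_t+1)^{1/2}\,e^{t\lambda(\kappa,V)}.$$
The prefactor contributes only $O(\log t)=o(t)$, so the problem reduces to proving a discrete Lieb--Thirring bound $\lambda(\kappa,V)\leq(K_2/\sqrt{\kappa})\sum_{x\in\Z}V(x)^{3/2}$.

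The heart of the proof is this inequality. By Rayleigh--Ritz,
$$\lambda(\kappa,V)=\sup_{\|f\|_2=1}\{A(f)-\kappa B(f)\},\quad A(f):=\sum_x V(x)f(x)^2,\quad B(f):=\sum_x(f(x+1)-f(x))^2.$$
Telescoping $f(x)^2=\sum_{y<x}(f(y+1)+f(y))(f(y+1)-f(y))$ and applying Cauchy--Schwarz gives the discrete Sobolev inequality $\|f\|_\infty^2\leq 2\|f\|_2\sqrt{B(f)}$, so that $\sum_x f(x)^6\leq\|f\|_\infty^4\|f\|_2^2\leq 4B(f)$ when $\|f\|_2=1$ (a discrete Gagliardo--Nirenberg embedding). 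Hölder's inequality with exponents $3/2$ and $3$ then yields $A(f)\leq(\sum_x V^{3/2})^{2/3}(4B(f))^{1/3}$, and optimising $c\,S^{2/3}B^{1/3}-\kappa B$ over $B\geq 0$ (with $S:=\sum_x V^{3/2}$ and $c=4^{1/3}$) produces the announced bound with an explicit constant, for instance $K_2=4/(3\sqrt{3})$.

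The hard part will be the spectral step: the Rayleigh quotient lives on $\ell^2(\Z)$, whereas the Feynman--Kac formula is naturally paired with the constant function $\mathbf{1}\notin\ell^2(\Z)$. This forces the truncation to a growing box and careful bookkeeping of the resulting polynomial prefactor so as to absorb it into $o(t)$. Once this is in place, the Sobolev--Hölder--Young chain above is short and immediately yields the desired $\kappa^{-1/2}$ scaling, which is exactly the one dictated by the diffusive heat-kernel scale of simple random walk on $\Z$.
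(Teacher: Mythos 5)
Your proof is correct but proceeds by a genuinely different and more self-contained route than the paper. The paper establishes the passage from the Feynman--Kac expectation to the spectral quantity $\mu=\sup_{\|f\|_2=1}\mu(f)$ via the large deviation principle for occupation times together with Varadhan's lemma (citing den Hollander's monograph, with the finite-torus truncation described informally), then argues via references to Higuchi--Matsumoto--Ogurisu and Reed--Simon that $\mu$ is a genuine top eigenvalue, and finally invokes Sahovi\'c's discrete Lieb--Thirring inequality [S10, Cor.\ 1.4] as a black box to obtain the $\kappa^{-1/2}\sum V^{3/2}$ bound. You instead prove both ingredients directly: the truncation to a Dirichlet box of side $t^{2}$ plus Cauchy--Schwarz and the spectral-radius bound $\|e^{t\tilde{\mathcal{L}}}\delta_0\|_2\le e^{t\lambda_{\max}}$ replaces the LDP/Varadhan machinery and avoids any discussion of whether the sup of the Rayleigh quotient is attained, while the telescoping $f(x)^2=\sum_{y<x}(f(y+1)^2-f(y)^2)$ and Cauchy--Schwarz give the sharp one-dimensional discrete Sobolev bound $\|f\|_\infty^2\le 2\|f\|_2\sqrt{B(f)}$, from which the Gagliardo--Nirenberg inequality $\sum f^6\le 4B(f)$ on the unit sphere, H\"older with exponents $(3/2,3)$, and Young's inequality in $B$ yield the spectral estimate with the explicit constant $K_2=4/(3\sqrt3)$, replacing the appeal to Sahovi\'c. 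Your derivation is elementary, entirely one-dimensional, and exposes the $\kappa^{-1/2}$ scaling transparently through the optimization step; what the paper's route buys is brevity and a pointer to the general Lieb--Thirring literature for Jacobi matrices, at the cost of leaving the LDP step sketched and the constant implicit. Both arguments deliver the identical statement, including the reduction $\sum_x V(x)^{3/2}=\sum_i|I_i\setminus I_{i-1}|(\sum_{j\ge i}\beta_j)^{3/2}$ forced by nestedness.
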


\begin{lemma}
\label{lem:crossing}
There are $C_5,C_6>0$ such that for all $\kappa,t>0$ large enough, all $A> 0$ and all $k_*(\kappa) \geq C_5t$,
\begin{equation}
\label{eq:controlonkkappa}
P_0\Big(\mbox{$X^{\kappa}$ crosses $k_*(\kappa)$
$(\kappa,1)$-blocks}\Big) 
\leq e^{-C_6Ak_*(\kappa)}.
\end{equation}
\end{lemma}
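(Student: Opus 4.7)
My plan is to mimic the strategy used in the proof of Lemma~\ref{lem:nokblocks}, dividing $[0,t]$ into $t/A$ consecutive sub-intervals of length $A$ and labelling the path $X^\kappa$ by the vector $(l_1^\kappa,\ldots,l_{t/A}^\kappa)$ whose $i$-th entry records the number of distinct $(\kappa,1)$-blocks crossed by $X^\kappa$ during $[(i-1)A,iA)$. By construction $k_*(\kappa)=\sum_i l_i^\kappa$ and $l_i^\kappa\geq 1$ for every $i$.

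By the strong Markov property at the times $iA$ and spatial translation invariance of the law of $X^\kappa$, the moment generating function factorises as $E_0\bigl[e^{\lambda k_*(\kappa)}\bigr]=\bigl(E_0\bigl[e^{\lambda l_1^\kappa}\bigr]\bigr)^{t/A}$ for every $\lambda\geq 0$. I would next establish a one-interval tail bound of the form $P_0(l_1^\kappa\geq l)\leq e^{-cA(l-1)}$ for a suitable constant $c>0$, using the geometric observation (already exploited in Lemma~\ref{lem:nokblocks}) that visiting $l$ distinct $(\kappa,1)$-blocks in time $A$ forces $X^\kappa$ to hit at least $l/2^d$ distinct spatial cells of side $\sqrt{\kappa}A$, and hence to perform at least $(l/2^d-1)\sqrt{\kappa}A$ jumps. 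Applying the Poisson tail estimate for $\mathrm{Poisson}(2d\kappa A)$ in the regime $l\gtrsim\sqrt{\kappa}$, together with a Gaussian displacement bound for $X^\kappa$ in the moderate regime, yields the required tail. Choosing $\lambda=\alpha A$ with $\alpha$ sufficiently small and summing the tail gives $E_0\bigl[e^{\lambda l_1^\kappa}\bigr]\leq e^{\lambda}\bigl(1+O(e^{-cA/2})\bigr)$, so that a Chernoff bound yields, for $K\geq C_5 t$ with $C_5 A$ large enough to ensure $K-t/A\geq K/2$,
\begin{equation*}
P_0\bigl(k_*(\kappa)\geq K\bigr)\leq \bigl(E_0\bigl[e^{\lambda l_1^\kappa}\bigr]\bigr)^{t/A} e^{-\lambda K}\leq e^{-C_6 A K}
\end{equation*}
for an appropriate universal constant $C_6>0$.

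The main obstacle is establishing the one-interval tail $P_0(l_1^\kappa\geq l)\lesssim e^{-cA(l-1)}$ uniformly in $l$ and in the dimension $d$: in $d\geq 3$, a purely displacement-based Gaussian LDP on a single coordinate gives only $e^{-cAl^{2/d}}$ for moderate values of $l$, which is subexponential in $l$. This weaker bound has to be complemented by the jump-count Poisson LDP (which only becomes effective for $l$ of order $\sqrt{\kappa}$) and by a careful interpolation in the intermediate regime, so that the MGF estimate remains valid with $\lambda\propto A$. As in the proof of Lemma~\ref{lem:nokblocks}, one should also count the number of admissible labels with fixed $\sum_i l_i^\kappa$ via a Hardy--Ramanujan type estimate, in order to absorb the combinatorial prefactor that appears when one writes the union bound over all possible labels of the path.
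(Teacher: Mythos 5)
Your strategy shares the paper's broad skeleton (decompose $[0,t]$ into intervals of length $A$, label paths by per-interval block counts, apply the Markov property, control a per-interval tail, absorb the combinatorial prefactor via Hardy--Ramanujan/Erd\H{o}s), but there is a genuine gap exactly where you flag one, and the "careful interpolation" you gesture at does not close it.

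The gap is the per-interval tail for moderate $l$ in $d\geq 3$. The jump-count Poisson LDP you propose is inert below $l\asymp\sqrt{\kappa}$, since the bound $N\geq(l/2^d-1)\sqrt{\kappa}A$ only exceeds the Poisson mean $2d\kappa A$ once $l\gtrsim\sqrt{\kappa}$. In the complementary range the $d$-dimensional Gaussian displacement bound gives $P_0(l_1^\kappa\geq l)\lesssim e^{-cAl^{2/d}}$, and the one-interval moment generating function then satisfies $E_0\bigl[e^{\alpha A\,l_1^\kappa}\bigr]\geq\sum_{l\lesssim\sqrt{\kappa}}e^{A(\alpha l-cl^{2/d})}$. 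For $d\geq 3$ the exponent $\alpha l-cl^{2/d}$ is eventually increasing in $l$, so this sum is dominated by its top end and is of order $e^{cA\alpha\sqrt{\kappa}}$ rather than $e^{\alpha A}(1+o(1))$. The Chernoff step $\bigl(E_0[e^{\alpha A\,l_1^\kappa}]\bigr)^{t/A}e^{-\alpha AK}$ then fails to give $e^{-C_6AK}$ uniformly in large $\kappa$; no choice of $\lambda\propto A$ repairs this. What would be needed is a per-interval tail genuinely exponential in $l$ in the whole moderate regime, and the displacement\,+\,jump-count arguments you quote do not supply one.

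The paper takes a different route that bypasses this issue entirely. It never uses the jump-count Poisson LDP in this lemma. Instead, it (i) proves a clean one-dimensional sub-Gaussian maximal inequality for the continuous-time walk via a time discretization combined with Doob's maximal inequality and Bernstein's inequality, and (ii) reduces to one dimension by projecting onto the coordinate that records the most $(\kappa,1)$-block crossings; the per-interval tail is then driven by the one-dimensional displacement of a single coordinate, yielding an exponent essentially quadratic in the relevant block count, and the proof closes by summing the shifted counts $(l_i^\kappa-2)$ over intervals, bounding $\sum_i(l_i^\kappa-2)\geq k_*(\kappa)-4t/A$, and applying the Hardy--Ramanujan count. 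So the missing idea in your proposal is precisely this reduction to a single coordinate together with a genuine sub-Gaussian one-dimensional maximal bound, replacing the $d$-dimensional displacement and jump-count estimates whose weakness you correctly identified.
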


Note that $A^{R+1}\delta_{R+1} < A^{R}\delta_R$, and so $\widetilde{B}_{R+1}(k_*(\kappa))\subseteq 
\widetilde{B}_{R}(k_*(\kappa))$ for all $R\in\N$. Moreover, for $k_*(\kappa)\leq C_2C_4\sqrt{\kappa}t+2t/A$ 
and  $1\leq R\leq R^*(k_*(\kappa))$ we have that the cardinality of the spatial part of the blocks defined in 
(\ref{eq:Btilde}) satisfies $|\widetilde{B}_{R} (k_*(\kappa))|\leq |\widetilde{B}_{1} (k_*(\kappa))| \leq A^2 
\delta_1C_2C_4\kappa + 2A\delta_1\sqrt{\kappa}$, which is \emph{bounded uniformly in $t$}. To apply 
Lemma~\ref{lem:LDPest}, we choose $t_0$ (which may depend on $\kappa$) such that for each family 
of intervals $I_1,\ldots, I_{R^*(k_*)}$, $k_*(\kappa) \in [t/A, C_2C_4\sqrt{\kappa} t +2t/A]$, with the property 
that $|I_i|\in[A, A^2\delta_1C_2C_4\kappa + 2A\delta_1\sqrt{\kappa}]$ for all $i\in\{1,\ldots, R^*(k_*(\kappa))\}$ 
the assertion of Lemma~\ref{lem:LDPest} holds uniformly in $t\geq t_0$. Then, for all $t\geq t_0$, the 
expectation in the left-hand side of (\ref{eq:upperbound}) is at most
\begin{equation}
\label{eq:estexponent}
\exp\Bigg\{\frac{K_2t}{\sqrt{\kappa}}\sum_{R=1}^{R^*(k_*(\kappa))}
\Bigg[\Big|\widetilde{B}_{R}(k_*(\kappa))\setminus \widetilde{B}_{R+1}(k_*(\kappa))\Big|
\bigg(\sum_{j=1}^{R}8\delta A^{(j+1)d}\bigg)^{3/2}\Bigg] + o(t)\Bigg\},
\end{equation}
where $\widetilde{B}_{R^*(k_*(\kappa))+1}(k_{*}(\kappa)) = \emptyset$. Next, note that
\begin{equation}
\label{eq:cardinalityofblocks}
|\widetilde{B}_{R}(k_*(\kappa))\setminus \widetilde{B}_{R+1}(k_*(\kappa))|
\leq \frac{A^{R}\delta_{R}k_*(\kappa)\sqrt{\kappa}A}{t}.
\end{equation}
Therefore the first term in the exponent of (\ref{eq:estexponent}), may be estimated from above by
\begin{equation}
\label{eq:estofexponent}
\begin{aligned}
\frac{K_2t}{\sqrt{\kappa}}\sum_{R=1}^{R^*(k_*(\kappa))}&
\Bigg[\frac{A^{R}\delta_{R}k_*(\kappa)\sqrt{\kappa}A}{t}
\bigg(\sum_{j=1}^{R}8\delta A^{(j+1)d}\bigg)^{3/2}\Bigg]\\
&\leq (8\delta)^{3/2}K_2 k_*(\kappa) A^{3d/2+1}\sum_{R=1}^{R^*(k_*(\kappa))}
\Bigg[A^R\delta_R\bigg(\sum_{j=1}^{R} A^{jd}\bigg)^{3/2}\Bigg].
\end{aligned}
\end{equation}
Furthermore,
\begin{equation}
\label{eq:geomsum}
\sum_{j=1}^{R} A^{jd} = \frac{A^d}{A^d-1}(A^{Rd}-1) \leq CA^{Rd},
\end{equation}
where $C>0$ does not depend on $A$. Hence, the right-hand side of (\ref{eq:estofexponent}) 
is at most
\begin{equation}
\label{eq:firstconclusion}
(8\delta )^{3/2}CK_2k_*(\kappa)A^{3d/2+1}\sum_{R=1}^{R^*(k_*(\kappa))}
A^R\delta_RA^{3Rd/2}.
\end{equation}
Recalling our choice of $\delta_R$ in Proposition~\ref{prop:badintervals}, we can estimate the sum in (\ref{eq:firstconclusion}) from above by
\begin{equation}
\label{eq:lastsum}
K_1A^{-8d^2/2}A^{-D(d)}
\bigg[\frac{1-A^{-R^{*}(k_*(\kappa))D(d)}}{1-A^{-D(d)}}\bigg].
\end{equation}
with $D(d)= (16d^2-d-6)/6>0$. Since $A>3$ by Proposition \ref{prop:FKest}, the last term in (\ref{eq:lastsum}) 
is bounded uniformly in $A$ and $R^*(k_*(\kappa))$. Inserting (\ref{eq:lastsum}) into (\ref{eq:firstconclusion}), 
we see that there is a $C_7>0$, not depending on $A$, such that the exponent in (\ref{eq:estexponent}) is 
bounded from above by $(8\delta)^{3/2}C_7A^{-D'(d)}k_*(\kappa) +o(t)$ with $D'(d)=(16d^2-5d-6)/3>0$, 
where $o(t)$ is uniform in $t\geq t_0$ for all $k_*(\kappa)\in[t/A, C_2C_4\sqrt{\kappa}t +2t/A]$.

\medskip\noindent
{\bf 5.}
It remains to estimate (recall  (\ref{eq:Holderbound}))
\begin{equation}
\label{eq:probaterm}
\sum_{(B_{1}^{A}(x_i,k_i;\kappa))_{0\leq i<k_*(\kappa)}}
\Bigg[P_0\Big(X^{\kappa} \mbox{ crosses }B_{1}^{A}(x_i,k_i,\kappa), 0\leq i< k_*(\kappa)\Big)\Bigg]^{1/2}.
\end{equation}
Let $|\sum_{(B_{1}^{A}(x_i,k_i;\kappa))_{0\leq i<k_*(\kappa)}}|$ denote the cardinality of the sum 
in  (\ref{eq:probaterm}). By Jensen's inequality, (\ref{eq:probaterm}) is not more than (recall 
(\ref{eq:sumoverblocks}))
\begin{equation}
\label{eq:Jensen}
\begin{aligned}
&\Bigg|\sum_{(B_{1}^{A}(x_i,k_i;\kappa))_{0\leq i<k_*(\kappa)}}\Bigg|^{1/2}
\Bigg[\sum_{(B_{1}^{A}(x_i,k_i;\kappa))_{0\leq i<k_*(\kappa)}}
P_0\Big(X^{\kappa} \mbox{ crosses }B_{1}^{A}(x_i,k_i;\kappa), 0\leq i< k_*(\kappa)\Big)\Bigg]^{1/2}\\
&= \Bigg|\sum_{(B_{1}^{A}(x_i,k_i;\kappa))_{0\leq i<k_*(\kappa)}}\Bigg|^{1/2}
\Bigg[P_0\Big(X^{\kappa} \mbox{ crosses $k_*(\kappa)$ $(\kappa,1)$-blocks}\Big)\Bigg]^{1/2}.
\end{aligned}
\end{equation}
To estimate the first term in the right-hand side of (\ref{eq:Jensen}), note that $|\sum_{(B_{1}^{A}(x_i,k_i;
\kappa))_{0\leq i<k_*(\kappa)}}|$ equals the number of different ways to visit $k_*(\kappa)$ $(\kappa,1)$-blocks.
Hence, there is a $C_{8}>0$ such that $|\sum_{(B_{1}^{A}(x_i,k_i;\kappa))_{0\leq i<k_*(\kappa)}}|$ is bounded 
from above by $e^{C_{8}k_*(\kappa)}$ (see also Lemma ~\ref{lem:cardinality} in Section~\ref{S6.4}).
Therefore, by Lemma~\ref{lem:crossing}, for $k_*(\kappa) \geq C_5t$ and $\kappa$ large enough, the 
right-hand side of (\ref{eq:Jensen}) may be estimated from above by
\begin{equation}
\label{eq:probabound}
e^{C_{8}k_*(\kappa)}\,e^{-C_6Ak_*(\kappa)}.
\end{equation}

\medskip\noindent
{\bf 6.}
We are now in a position to complete the proof of \eqref{eq:tocontrol}. Combining the estimates in 
\eqref{eq:appofrearrange}, \eqref{eq:upperbound} and (\ref{eq:estofexponent}--\ref{eq:probabound}),
we get for $t\geq t_0$ (see the lines following (\ref{eq:lastsum})),  
\begin{equation}
\label{eq:finalestimations}
\begin{aligned}
&E_0\Bigg(\exp\left\{ 
4\sum_{R=1}^{\varepsilon \log t} \delta A^{(R+1)d} l_t\big(\mathrm{BAD}_R\big)\right\}
\one{\big\{\exists\, k_* \leq C_2\kappa t\colon\, X^{\kappa}\in\Pi(k_*,t,A)}\big\}\Bigg)\\
&\qquad  \leq e^{\tilde{\varepsilon}t/2}\sum_{k_*(\kappa)=t/A}^{C_5 t-1} 
e^{(8\delta)^{3/2}C_{7}A^{-D'(d)}k_*(\kappa)+o(t)} \\
&\qquad\qquad\qquad + e^{\tilde{\varepsilon}t/2} \sum_{k_*(\kappa)=C_5 t}^{C_2C_4\sqrt{\kappa}t+2t/A}
e^{(8\delta)^{3/2}C_{7}A^{-D'(d)}k_*(\kappa)+o(t)}\,e^{C_{8}k_*(\kappa)} 
e^{-C_6Ak_*(\kappa)}\\
&\qquad \leq e^{\tilde{\varepsilon}t/2} C_5 t\, e^{(8\delta)^{3/2}C_{7}C_5 A^{-D'(d)}t+o(t)} 
+ e^{\tilde{\varepsilon}t/2}C_{9}\\
&\qquad\leq e^{2\tilde{\varepsilon}t},
\end{aligned}
\end{equation}
where we use that the sum in the third line of (\ref{eq:finalestimations}) is finite for $A$ large enough 
(which requires that $\varepsilon$ is small enough; recall Proposition~\ref{prop:FKest}). This settles 
\eqref{eq:tocontrol} and completes the proof of Proposition~\ref{prop:boundedness}. 
\end{proof}


\section{Proof of Proposition \ref{prop:badintervals}}
\label{S6}

The proof is given in Section~\ref{S6.1} subject to Lemma~\ref{lem:insuf} below. This lemma is stated 
in Section~\ref{S6.1} and proved in Sections~\ref{S6.2}--\ref{S6.5}. Recall the definition of $\Xi_R^{A,k_*}$ 
in (\ref{eq:Xi}). Throughout this section we abbreviate 
\begin{equation}
\widetilde{\delta}_R = A^{-2d(2d+1)R}.
\end{equation} 


\subsection{Proof of Proposition~\ref{prop:badintervals} subject to a further lemma}
\label{S6.1}

\bl{lem:insuf}
There is a $C>0$ such that $\xi$-a.s.\ for all $A$ and $m$ large enough, all $R\in\N$ and all $k_*\in\N$,
\begin{equation}
\label{eq:insuf}
\Xi_R^{A,k_*} \leq CA^{-(4d^2-1)}A^{-R}\widetilde{\delta}_Rk_*.
\end{equation}
\el

We are now ready to prove Proposition~\ref{prop:badintervals}.

\begin{proof}
Let $\Phi\in\Pi(k_*,t,A)$ and $R\in\N$. Suppose that there is a $\delta_R >0$ such that there are at least 
$\sqrt{\delta_R} t/A^{R}$ $R$-intervals in which $\Phi$ crosses more than $\delta_R k_*/(t/A)$ bad 
$R$-blocks. In all of these $R$-intervals $\Phi$ crosses at least 
\begin{equation}
\label{eq:badintervals}
\frac{\sqrt{\delta_R}t}{A^R}\,\frac{\delta_Rk_*}{(t/A)} = \delta_R^{3/2} A^{-(R-1)}k_*
\end{equation}
bad $R$-blocks. Lemma~\ref{lem:insuf} implies that $\xi$-a.s.\ $\delta_R^{3/2} A^{-(R-1)} \leq 
CA^{-(4d^2-1)}A^{-R}\widetilde{\delta}_R$, which is the same as $\delta_R^{3/2} \leq 
CA^{-4d^2}A^{-2d(2d+1)R}$. This yields the claim below \eqref{eq:deltaR} with $K_1=C^{2/3}$.
\end{proof}


\subsection{Proof of Lemma \ref{lem:insuf} subject to two further lemmas}
\label{S6.2}

The proof of Lemma~\ref{lem:insuf} is a modification of the proof of \cite[Lemma 3.5]{EdHM12} 
and is based on Lemmas~\ref{lem:multiscalebad}--\ref{lem:recursion} below, which count bad 
$R$-blocks. The proof of the second lemma is deferred to Section~\ref{S6.3}.
 
For $A\geq 1$, $R\in\N$ and $\Phi \in \Pi(k_*,t,A)$, define
\begin{equation}
\begin{aligned}
\Psi_R^A(\Phi) 
&= \mbox{number of good $(R+1)$-blocks crossed by $\Phi$ containing a bad $R$-block},\\
\Psi_R^{A,k_*} 
&= \sup_{\Phi \in \Pi(k_*,t,A)} \Psi_R^A(\Phi).
\end{aligned}
\end{equation}

\bl{lem:multiscalebad}
There is a $C'>0$ such that for all $A$ and $m$ large enough
\begin{equation}
\label{eq:multiscalepsi}
\P\Big(\Psi_{R}^{A,k_*} \geq C'A^{-R}\widetilde{\delta}_R k_*
\mbox{ for some } R\in\N \mbox{ and some } k_*\in\N_0\Big)
\end{equation}
is summable over $t\in\N$. A possible choice is $C'=3$. 
\el

\bl{lem:recursion}
For all $\varepsilon >0$ there is an $A=A(\varepsilon) >3$ such that $\xi$-a.s.\ there is a $t_0>0$ 
such that for all $R\in\N$, all $k_*\in\N$ and all $t\geq t_0$, 
\begin{equation}
\label{badblockest}
\Xi_{R}^{A,k_*}\leq A^{d+1}\sum_{i=1}^{\varepsilon\log t-R-1}2^{di}A^{(d+1)i}\Psi_{R+i}^{A,k_*}.
\end{equation}
\el

\begin{proof}
Lemma~\ref{lem:recursion} is the same as \cite[Lemma 3.7]{EdHM12}. The idea is to look at a bad 
$R$-block and check whether it is contained in a good $(R+1)$-block or in a bad $(R+1)$-block. An 
iteration over $R$, combined with a simple counting argument and Lemma~\ref{lem:nobadblocks}, 
yields the claim.
\end{proof}

We are now ready to prove Lemma~\ref{lem:insuf}.

\begin{proof}
By Lemma~\ref{lem:multiscalebad}, $\xi$-a.s.\ for $t$ large enough $\Psi_{R}^{A,k_*} \leq 
C'A^{-R}\widetilde{\delta}_R k_*$  for all $R\in\N$ and all $k_*\in\N$. By Lemma~\ref{lem:recursion}, 
recalling that $\widetilde{\delta}_R = A^{-2d(2d+1)R}$, we may estimate
\begin{equation}
\label{eq:propest}
\begin{aligned}
\Xi_{R}^{A,k_*} 
&\leq A^{d+1}\sum_{i\geq 1} 2^{di}A^{(d+1)i}C'A^{-(R+i)}\widetilde{\delta}_{R+i} k_*\\
&= C'A^{d+1}A^{-R}\widetilde{\delta}_Rk_* \sum_{i\geq 1}2^{di} A^{(d+1)i}A^{-i}A^{-2d(2d+1)i}\\
&= C'A^{d+1}A^{-R}\widetilde{\delta}_Rk_*  \frac{2^dA^dA^{-2d(2d+1)}}{1-2^dA^{d}A^{-2d(2d+1)}}.
\end{aligned}
\end{equation}
Note that for $A \geq A_0>1$ there is a $C>0$, depending on $A_0$ but not on $A$, such that 
the term in the right-hand side of (\ref{eq:propest}) is bounded from above by
\begin{equation}
\label{eq:upperest}
CA^{-(4d^2-1)}A^{-R}\widetilde{\delta}_Rk_*,
\end{equation}
which yields the claim.
\end{proof}


\subsection{Proof of Lemma \ref{lem:multiscalebad} subject to a further lemma}
\label{S6.3}

The proof of Lemma~\ref{lem:multiscalebad} is based on Lemma~\ref{insufblocks} below.
Let $x\in \Z^d$ and $k,R \in \N$. Abbreviate
\begin{equation}
\label{eq:chi}
\chi^{A}(x,k) = \one\big\{B_{R+1}^{A}(x,k) 
\mbox{ is good but contains a bad $R$-block}\big\}.
\end{equation}

\bl{insufblocks}
There is  a $C>0$ such that for all $A$ and $m$ large enough, all $R\in \N$ and all $k_*\in\N$,
\begin{equation}
\label{eq:insblocks}
\P\Big(
\begin{array}{ll}
&\mbox{there is a path that crosses $k_*$ $1$-blocks and intersects }\\ 
&\mbox{at least $3A^{-R}\widetilde{\delta}_Rk_*$ blocks $B_{R+1}^{A}(x,k)$ with $\chi^A(x,k)=1$}
\end{array}
\Big)
\leq \exp\big\{-CA^{-R}\widetilde{\delta}_R k_*\big\}.
\end{equation} 
\el

We are now ready to prove Lemma~\ref{lem:multiscalebad}.

\begin{proof}
First note that $k_*\geq t/A$ and that, $\xi$-a.s.\ for $t$ large enough, $1\leq R \leq \varepsilon \log t$, 
by Lemma \ref{lem:nobadblocks}. For each such $R$, we have by Lemma~\ref{insufblocks},
\begin{equation}
\label{eq:multiest}
\begin{aligned}
&\P\Big(
\begin{array}{ll}
&\mbox{there is a path that crosses $k_*$ $1$-blocks and intersects at least}\\ 
&\mbox{$3A^{-R}\widetilde{\delta}_Rk_*$ blocks $B_{R+1}^{A}(x,k)$ with $\chi^{A}(x,k)=1$ 
for some  $k_*\geq t/A$} 
\end{array}
\Big)\\
&\qquad \leq \sum_{k_*\geq t/A}\exp\{-CA^{-R}\widetilde{\delta}_R k_*\}
\leq \frac{\exp\{-CA^{-R}\widetilde{\delta}_R t/A\}}
{1-\exp\{-CA^{-R}\widetilde{\delta}_R\}}.
\end{aligned}
\end{equation}
Because $1\leq R\leq \varepsilon \log t$ and $R \mapsto  A^{-R}\widetilde{\delta}_R$ is non-increasing, 
the numerator in the right-hand side of \eqref{eq:multiest}  is bounded from above by 
$\exp\{-CA^{-\varepsilon \log t}\widetilde{\delta}_{\varepsilon \log t}t/A\}$ while the denominator is 
bounded from below by $1-\exp\{-CA^{-\varepsilon \log t}\widetilde{\delta}_{\varepsilon \log t}\}$. Using 
the choice of $A$ in Lemma~\ref{lem:nobadblocks},  we see that (\ref{eq:multiest}) 
is bounded from above by
\begin{equation}
\frac{\exp\{-Ct^{1-a^{-1}}/A\}}
{1-\exp\{-Ct^{-a^{-1}}\}}, \qquad a>1. 
\end{equation}
Note that this is of order $\exp\{-C't^{\tilde{\varepsilon}}\}$ for some $C',\tilde{\varepsilon}>0$, and so the 
probability in (\ref{eq:multiscalepsi}) is bounded from above by $(\varepsilon \log t) 
\exp\{-C't^{\tilde{\varepsilon}}\}$, which is summable over $t\in\N$.
\end{proof}


\subsection{Proof of Lemma \ref{insufblocks} subject to two further lemmas}
\label{S6.4}

The proof of Lemma \ref{insufblocks} is based on Lemmas~\ref{lem:cardinality}--\ref{lem:crossedblocks} 
below, which are proved in Section~\ref{S6.5}.

\bpr 
Our first further lemma reads:

\bl{lem:cardinality}
There is a $C>0$ such that for all $l\in\N$ and $R\in\N$ there are no more than
$e^{Cl}$ possible ways for $\Phi$ to visit at most $l$ $R$-blocks.
\el

Fix $R\in\N$. We divide blocks into equivalence classes such that blocks belonging to the same 
equivalence class can essentially be treated as independent. To that end, we take $a_1, a_2 \in\N$ 
according to condition {\rm(a1)} in Definition \ref{Gartnerposhypmix} and say that $(x,k)$ and $(x',k')$ 
are equivalent when
\begin{equation}
\label{eq:mod}
x = x' \, (\mathrm{mod}\,a_1), \qquad k = k' \, (\mathrm{mod}\,a_2).
\end{equation}
We denote the set of corresponding representants by $([x],[k])$, and write $\sum_{([x],[k])}$ 
to denote the sum over all equivalence classes. Note that the 
left-hand side of (\ref{eq:insblocks}) is bounded from above by
\begin{equation}
\label{eq:sumbound}
\sum_{([x],[k])} 
\P\Bigg(\begin{array}{ll}
&\mbox{there is a path that crosses $k_*$ $1$-blocks and intersects}\\
&\mbox{at least $3A^{-R}\widetilde{\delta}_R k_*/a_1^{d}a_2$ 
blocks $B_{R+1}^{A}(x,k)$}\\
&\mbox{with $\chi^{A}(x,k)=1$ and $(x,k)\equiv ([x],[k])$}
\end{array}
\Bigg).
\end{equation}
Fix an equivalence class. Put $\rho_R = A^{-4d(2d+1)(d+1)R}$ (recall (\ref{wGartner})). To control the 
cardinality of the number of different ways to visit a given number of $(R+1)$-blocks, we consider 
enlarged blocks, namely, we let
\begin{equation}
L=L(R) = (1/\rho_R)^{1/(d+1)}
\end{equation} 
and define
\begin{equation}
\label{eq:enblock}
\tilde{B}_{R}^{A}(x,k) 
= \left(\prod_{j=1}^{d}
\big[L x(j)A^{R},L(x(j)+1)A^{R}\big)\cap\Z^d\right)\times[L kA^{R},L(k+1)A^{R}).
\end{equation}

Our second further lemma reads:

\bl{lem:crossedblocks}
If $\Phi$ crosses $k_*$ $1$-blocks, then for all $R\in\N$ it crosses no more than $l_R=3k_*/A^{R-1}L$ 
blocks $\widetilde{B}_R^{A}(x,k)$.
\el
 
We write 
\begin{equation}
\label{eq:allseq}
\bigcup_{(x_i,k_i)_{0\leq i<l_{R+1}}} \tilde{B}_{R+1}^{A}(x_i,k_i)
\end{equation}
to denote the union over at most $l_{R+1}$ blocks $\widetilde{B}_R^{A}(x,k)$ and 
\begin{equation}
\label{eq:allsum}
\sum_{(\tilde{B}_{R+1}^{A}(x_i,k_i))_{0\leq i<l_{R+1}}}
\end{equation}
to denote the sum over all possible sequences of at most $l_{R+1}$ blocks $\tilde{B}_{R+1}^{A}(x_i,k_i)$ 
that can be crossed by a path $\Phi$. Since each block $B_{R+1}^{A}(x,k)$ that may be crossed by $\Phi$
lies in the union of (\ref{eq:allseq}), we may estimate the probability under the sum in (\ref{eq:sumbound}) 
from above by
\begin{equation}
\label{eq:upbound}
\sum_{(\tilde{B}_{R+1}^{A}(x_i,k_i))_{0\leq i<l_{R+1}}}
\P\Big(\begin{array}{ll}
&\mbox{the union in (\ref{eq:allseq}) contains at least $3A^{-R}\widetilde{\delta}_Rk_*/a_1^{d}a_2$ blocks}\\ 
&\mbox{$B_{R+1}^{A}(x,k)$ with $\chi^{A}(x,k)=1$ and $(x,k)\equiv ([x],[k])$}
\end{array}
\Big).
\end{equation}

Next, note that the union in (\ref{eq:allseq}) contains at most $l_{R+1}L^{d+1}$ $(R+1)$-blocks and that 
there are ${l_{R+1}L^{d+1}\choose n}$ ways of choosing $n$ blocks $B_{R+1}^{A}(x,k)$ with $\chi^{A}(x,k)=1$ 
from $l_{R+1}L^{d+1}$ $(R+1)$-blocks.  Hence, by the mixing condition in (\ref{wGartner})
for $A$ and $m$ large enough, each summand 
in \eqref{eq:upbound} is bounded from above by
\begin{equation}
\label{eq:binom}
\sum_{n=\delta_R k_{R+1}/a_1^{d}a_2}^{l_{R+1}L^{d+1}} \binom{l_{R+1}L^{d+1}}{n}(\rho_R)^n
\leq(1-\rho_R)^{-l_{R+1}L^{d+1}}
\P\big(T\geq 3A^{-R}\widetilde{\delta}_R k_*/a_1^{d}a_2\big),
\end{equation}
where $T=\mathrm{BINOMIAL}(l_{R+1}L^{d+1},\rho_R)$. Since
\begin{equation}
\E(T) = \rho_R l_{R+1}L^{d+1} = l_{R+1} = 3k_*/A^{R}L  
= 3A^{-R}A^{-4d(2d+1)R}k_*  = 3A^{-R}\widetilde{\delta}_R^{2}k_*
\ll 3A^{-R}\widetilde{\delta}_Rk_*,
\end{equation} 
we can apply standard large deviation estimates to bound the right-hand side of (\ref{eq:binom}). Indeed, 
by Bernstein's inequality, there is a $C'>0$ (depending on $a_1$ and $a_2$ only) such that for all $A$ 
and $m$ large enough,
\begin{equation}
\label{eq:bernstein}
\P\big(T\geq 3A^{-R}\widetilde{\delta}_R k_*/a_1^{d}a_2\big) 
\leq e^{-C'3A^{-R}\widetilde{\delta}_R k_*/a_1^{d}a_2}.
\end{equation}
Moreover, there is a $C''>0$ (not depending on $A$, provided $A$ is large enough) such that
\begin{equation}
\label{eq:rest}
(1-\rho_R)^{-l_{R+1}L^{d+1}} \leq e^{\rho_R l_{R+1}L^{d+1}/(1-\rho_R)}
\leq e^{C'' 3A^{-R}\widetilde{\delta}_R^2k_* }.
\end{equation}
Furthermore, by Lemma~\ref{lem:cardinality}, and after a possible increase of $C''$, the sum in (\ref{eq:allsum}) 
contains at most $e^{C''l_{R+1}}= e^{C''3A^{-R}\widetilde{\delta}_R^2k_*}$ elements. Hence, combining 
\eqref{eq:sumbound}, (\ref{eq:upbound}--\ref{eq:binom}) and (\ref{eq:bernstein}--\ref{eq:rest}), we see that 
the left-hand side of \eqref{eq:insblocks} is bounded from above by $e^{-CA^{-R}\widetilde{\delta}_R k_*}$, 
with $C$ such that $CA^{-R}\widetilde{\delta}_R k_* \geq (C'/a_1^{d}a_2-\widetilde{\delta}_R2C'')3A^{-R}
\widetilde{\delta}_Rk_*$, which yields the claim in \eqref{eq:insblocks}.
\epr


\subsection{Proof of Lemmas~\ref{lem:cardinality}--\ref{lem:crossedblocks}}
\label{S6.5}

\bpr
For the proof of Lemma~\ref{lem:cardinality}, see the proof of \cite[Claim 3.8]{EdHM12}.
The proof of Lemma~\ref{lem:crossedblocks} goes as follows. Let $R\in\N$. Divide time 
into intervals of length $LA^{R}$. Let $l_i^L$ and $l_i$ be the number of blocks $\widetilde{B}_R^{A}(x,k)$, 
respectively, $1$-blocks, crossed by $X^{\kappa}$ in the $i$-th time interval $[(i-1)LA^{R},iLA^{R})$, 
$1\leq i\leq t/LA^{R}$. Note that $l_i \geq LA^{R-1}$ because the length of the time-interval of 
each block $\widetilde{B}_R^{A}(x,k)$ is $LA^{R}$, which may be divided into $LA^{R-1}$ 
time-intervals of length $A$. Moreover $X^{\kappa}$ has to cross at least one $1$-block in 
each such interval of length $A$. Also note that if $l_i=LA^{R-1}$, then $l_i^L \leq 2 =2l_i/l_i
\leq 2l_i^{1}/LA^{R-1}$.  If $LA^{R-1}+1 \leq l_i \leq 2LA^{R-1}$, then $l_i^L\leq 3$, because 
$X^{\kappa}$ may start at an interface between two blocks $\widetilde{B}_R^{A}(x,k)$ and 
immediately jump from one such block to another. However, to afterwards reach the next block 
$\widetilde{B}_R^{A}(x,k)$ it has to cross at least $LA^{R-1}$ $1$-blocks, and so $l_i^{L}
\leq 3l_i/l_i\leq 3l_i^{1}/LA^{R-1}$. Furthermore, for $j \in \N$, if $jLA^{R-1}+1 \leq l_i \leq (j+1)
LA^{R-1}$, then 
\begin{equation}
\label{liest}
l_i^L \leq (j+2)l_i/l_i \leq (j+2)l_i/jLA^{R-1}.
\end{equation}
Therefore we have 
\begin{equation}
\label{eq:kR1}
k_* = \sum_{i=1}^{t/LA^{R-1}} l_{i} 
\geq \frac{LA^{R-1}}{3} \sum_{i=1}^{t/LA^{R-1}} l_{i}^L,
\end{equation}
or $\sum_{i=1}^{t/LA^{R-1}} l_{i}^L \leq (3/LA^{R-1}) k_* = l_R$, which completes the proof.
\epr


\section{Proof of Proposition \ref{prop:localTime}}
\label{S7}

In Section~\ref{S7.1} we reduce the problem to one dimension and recall 
two discrete rearrangement inequalities from the literature 
(Propositions~\ref{prop:rearrangeineq1}--\ref{prop:rearrangeineq2} below). 
In Section~\ref{S7.2} we use the latter to give the proof of Proposition~\ref{prop:localTime}.


\subsection{Reduction to one dimension and discrete rearrangement inequalities}
\label{S7.1}

\begin{lemma}
\label{lem:oned}
Let $B\subseteq \Z^d\times [0,t]$. Then, for all $C \geq 0$,
\begin{equation}
\label{eq:claimest}
E_0\big(e^{Cl_t(B)}\big) \leq E_0\big(e^{Cl_t(\pi_1(B))}\big).
\end{equation}
\end{lemma}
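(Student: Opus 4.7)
The plan is to prove the lemma by pointwise domination of the integrands, which makes the expectation inequality immediate.

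First I would unpack the definitions. Writing $B_s = \{x \in \Z^d : (x,s) \in B\}$ for the time-$s$ slice of $B$, we have
\begin{equation*}
l_t(B) = \int_0^t \one\{X^{\kappa}(s) \in B_s\}\, ds,
\qquad
l_t(\pi_1(B)) = \int_0^t \one\{\pi_1(X^{\kappa}(s)) \in \pi_1(B_s)\}\, ds,
\end{equation*}
since the time-$s$ slice of $\pi_1(B) \subseteq \Z \times [0,t]$ is exactly $\pi_1(B_s)$.

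The key observation is the trivial set-theoretic fact that $x \in B_s$ implies $\pi_1(x) \in \pi_1(B_s)$. Applying this with $x = X^{\kappa}(s)$ gives the pointwise (in $\omega$) inequality
\begin{equation*}
\one\{X^{\kappa}(s) \in B_s\} \leq \one\{\pi_1(X^{\kappa}(s)) \in \pi_1(B_s)\} \qquad \forall\, s \in [0,t],
\end{equation*}
whence, upon integrating in $s$, we obtain $l_t(B) \leq l_t(\pi_1(B))$ almost surely. Because $C \geq 0$, the map $x \mapsto e^{Cx}$ is non-decreasing, so $e^{Cl_t(B)} \leq e^{Cl_t(\pi_1(B))}$ almost surely, and taking $E_0$-expectation yields \eqref{eq:claimest}.

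There is no genuine obstacle here: the lemma is a pointwise monotonicity statement used merely to reduce the $d$-dimensional problem to a one-dimensional one before invoking the rearrangement inequalities stated in the subsection. The real work happens in Proposition~\ref{prop:localTime} itself, where the symmetric decreasing rearrangement on $\Z$ is applied in each time slice.
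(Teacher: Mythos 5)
Your proof is correct and follows essentially the same approach as the paper: both establish the pointwise implication $X^{\kappa}(s)\in B \Rightarrow \pi_1(X^{\kappa}(s))\in\pi_1(B)$, deduce $l_t(B)\leq l_t(\pi_1(B))$ almost surely, and apply monotonicity of the exponential. Your write-up is in fact a bit more explicit than the paper's (which also briefly notes, though does not need for this step, that the coordinates of $X^\kappa$ are independent one-dimensional walks).
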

\begin{proof}
A $d$-dimensional simple random walk with jump rate $2d\kappa$ is a vector of $d$ independent
one-dimensional simple random walks, each having jump rate $2\kappa$. Hence, given any set 
$B\subseteq \Z^d \times[0,t]$,
\begin{equation}
\label{eq:reduction}
\forall\,s\geq 0\colon\qquad X^{\kappa}(s) \in B
\quad \Longrightarrow \quad \pi_1(X^{\kappa})(s) \in \pi_1(B).
\end{equation}
This in turn implies that $l_t(B) \leq l_t(\pi_1(B))$, which proves the claim.
\end{proof}

To prove Proposition~\ref{prop:localTime} we need two discrete rearrangement 
inequalities \cite{Pr96}, \cite{Pr98}. For an overview on continuous rearrangement 
inequalities we refer the reader to \cite[Chapter 3]{LL01}.

\begin{definition}
\label{ass:assumption}
A function $L\colon\,\Z\times\Z\to [0,\infty)$ is called of Riesz-type when, for all 
pairs of functions $f,g\colon\,\Z\rightarrow [0,\infty)$,
\begin{equation}
\label{eq:assumption}
\sum_{x,y\in\Z} f(x)L(x,y)g(y) 
\leq \sum_{x,y\in\Z} f^{\sharp}(x)L(x,y)g^{\sharp}(y).
\end{equation}
\end{definition}

\begin{proposition}{\rm \cite[Theorem 2.2]{Pr96}, \cite{Pr98})}
\label{prop:rearrangeineq1}
Let $K\colon\,[0,\infty) \to [0,\infty)$ be non-increasing. Then $L\colon\,\Z\times\Z \to [0,\infty)$ given by
$L(x,y)=K(|x-y|)$ is of Riesz-type.
\end{proposition}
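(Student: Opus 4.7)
The plan is to peel off two layer-cake decompositions and thereby reduce the statement to the discrete three-set rearrangement inequality, which is then invoked from \cite{Pr96, Pr98}.

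First, using $f(x) = \int_0^\infty \one\{f(x) > \lambda\}\, d\lambda$ and the analogous representation for $g$, together with the identity $(\one_S)^\sharp = \one_{S^\sharp}$, Fubini, and monotone convergence, I would reduce (\ref{eq:assumption}) to its indicator-function version: for every pair of finite sets $A, B \subseteq \Z$,
\begin{equation*}
\sum_{x,y \in \Z} \one_A(x)\, K(|x-y|)\, \one_B(y)
\leq \sum_{x,y\in\Z} \one_{A^\sharp}(x)\, K(|x-y|)\, \one_{B^\sharp}(y).
\end{equation*}
In both cases $(f^\sharp)$ and $(g^\sharp)$ are recovered from the rearranged super-level sets thanks to the definition of $\sharp$.

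Second, since $K\colon [0,\infty)\to[0,\infty)$ is non-increasing, a layer-cake representation gives $K(r) = \int_0^\infty \one\{K(r) > \mu\}\, d\mu$ with the level set $\{r \geq 0 : K(r) > \mu\}$ either empty or of the form $[0, s_\mu]$. Taking $r = |x-y|$, the indicator $\one\{K(|x-y|) > \mu\}$ equals $\one_{I_\mu}(x-y)$ for the symmetric interval $I_\mu = \{-s_\mu, \ldots, s_\mu\} \subset \Z$. After interchanging the sum and the $\mu$-integral, it suffices to prove, for every finite $A, B \subseteq \Z$ and every symmetric interval $I \subset \Z$,
\begin{equation*}
\#\{(x,y) \in A \times B : x - y \in I\}
\leq \#\{(x,y) \in A^\sharp \times B^\sharp : x - y \in I\}.
\end{equation*}

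This last inequality is the three-set discrete Riesz rearrangement inequality on $\Z$ with the ordering $0 \prec 1 \prec -1 \prec 2 \prec \cdots$. I would establish it by iterating compression moves on $A$ and $B$ that swap pairs of elements so as to push them towards the $\sharp$-minimal positions while preserving cardinality, checking that each such move does not decrease the counting functional on the left-hand side. Symmetry of $I$ under $r \mapsto -r$ is what makes this monotonicity check go through, and after finitely many moves the sets have been transformed into $A^\sharp$ and $B^\sharp$.

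The main obstacle is this single-swap compression step. In the continuous setting the analogue is immediate via one round of Steiner symmetrization across a hyperplane, but in $\Z$ with the snake-like ordering one must handle a finite but delicate case analysis depending on the signs and parities of the elements being swapped; in particular, the move that sends an element at position $-k$ to position $k+1$ (or vice versa) must be compared against all ways the difference $x-y$ can land in $I$. This combinatorial lemma is precisely the technical heart of \cite{Pr96, Pr98}, which we invoke as a black box rather than reprove here.
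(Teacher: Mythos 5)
The paper gives no proof of this proposition; it is quoted directly from Pruss \cite[Theorem 2.2]{Pr96}, \cite{Pr98}, so there is no internal argument to compare against. Your two layer-cake reductions — first of $f,g$ via $(\one_{\{f>\lambda\}})^{\sharp}=\one_{\{f^{\sharp}>\lambda\}}$ and Tonelli, then of the radially non-increasing kernel into indicators of symmetric balls, which are automatically their own $\sharp$-rearrangements — correctly peel the statement down to the three-set discrete Riesz inequality, and you rightly identify the polarization/compression step in the snake ordering $0\prec 1\prec -1\prec 2\prec\cdots$ as the genuine content. Since both you and the paper ultimately defer to Pruss for that combinatorial core, your sketch is consistent with the paper's treatment and merely makes explicit the standard bookkeeping that the citation presupposes.
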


Note that $(x,y) \mapsto p_s^{\kappa}(x,y)$ with $p_s^{\kappa}(x,y)$ the transition kernel 
of one-dimensional simple random walk with jump rate $2\kappa$ is of Riesz-type. Indeed, 
$p_s^{\kappa}(x,y) = p_s^{\kappa}(x-y,0) = p_s^{\kappa}(|x-y|,0)$ is a non-increasing function 
of $|x-y|$. 

The following multiple-sum version of Proposition~\ref{prop:rearrangeineq1} will be
needed also.

\begin{proposition}{\rm (\cite[Lemma 9.1 in Chapter 2]{Pr96}, \cite{Pr98})}
\label{prop:rearrangeineq2}
Fix $n\in\N$. Let $L_0, L_1, \ldots, L_{n-1}$ be a collection of Riesz-type functions 
on $\Z\times\Z$, and let $S_0, S_1, \ldots, S_n$ be a collection of non-negative functions 
on $\Z$. Then
\begin{equation}
\label{eq:rearrangeineq2}
 \sum_{x_0, x_1, \ldots, x_{n} \in \Z}
\left(\prod_{i=0}^{n-1} S_i(x_i) L_i(x_i,x_{i+1}) \right)S_n(x_n)
\leq \sum_{x_0, x_1, \ldots, x_{n} \in \Z}
\left(\prod_{i=0}^{n-1} S_i^\sharp(x_i) L_i(x_i,x_{i+1}) \right)S_n^\sharp(x_n).
\end{equation}
\end{proposition}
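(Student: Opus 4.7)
The plan is to prove Proposition~\ref{prop:rearrangeineq2} by induction on the chain length $n$, with the base case $n=1$ reducing directly to Proposition~\ref{prop:rearrangeineq1} via $f=S_0$ and $g=S_1$.

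For the inductive step, the central auxiliary statement is the following \emph{Key Lemma}: if $L$ is of Riesz-type and $g=g^\sharp$, then $h(y):=\sum_x L(x,y)\,g(x)$ also satisfies $h=h^\sharp$. My plan for the Key Lemma is a two-stage argument. First, reduce to indicators via the layer-cake representation: writing $g=\sum_k c_k\,\one_{B_k}$ where $B_k$ is the $\sharp$-initial segment of size $k$, linearity in $g$ reduces matters to the case $g=\one_{B_k}$, so it suffices to show that $y\mapsto\sum_{x\in B_k}L(x,y)$ is $\sharp$-non-increasing in $y$. Second, verify this pairwise in the $\prec$-order by applying the defining Riesz inequality to $f=\one_{\{y_2\}}$ (so that $f^\sharp=\one_{\{0\}}$) and $g=\one_{B_k}$, and iterating the comparison across consecutive entries $y_1\prec y_2$ to propagate $\sharp$-monotonicity.

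Once the Key Lemma is in hand, the inductive step proceeds by rearranging the $S_i$ from right to left. At stage $k$, with $S_{k+1},\ldots,S_n$ already replaced by their $\sharp$'s, the right tail
\[
\Omega_k(x_k):=\sum_{x_{k+1},\ldots,x_n} L_k(x_k,x_{k+1})\prod_{i=k+1}^{n-1}S_i^\sharp(x_i)L_i(x_i,x_{i+1})\cdot S_n^\sharp(x_n)
\]
is $\sharp$-non-increasing, by iterated application of the Key Lemma together with the elementary fact that the pointwise product of two $\sharp$-non-increasing functions (in the $\prec$-order) is $\sharp$-non-increasing. The substitution $S_k\mapsto S_k^\sharp$ is then permissible via one more application of the Riesz-type property on the edge $(x_{k-1},x_k)$, using $S_k\,\Omega_k$ as the function to be rearranged and exploiting that after rearrangement it equals $S_k^\sharp\,\Omega_k$, since $\Omega_k$ is already $\sharp$-non-increasing; the rearrangement is then propagated into the left tail by invoking the induction hypothesis on the shorter chain of length $k$ with effective endpoint data.

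The main obstacle is the Key Lemma. The $\sharp$-rearrangement on $\Z$ uses the asymmetric ordering $0\prec 1\prec -1\prec 2\prec -2\prec\cdots$, so even for Riesz-type kernels of the convolution form $K(|x-y|)$ with $K$ non-increasing, it is not manifest that convolution with such a kernel preserves $\sharp$-monotonicity. A careful pairwise comparison across consecutive $\prec$-jumps (e.g.\ $y=1$ versus $y=-1$), effectively amounting to a four-point inequality on $L$ weighted against the differences $g(x)-g(-x)$ for $x>0$, is the technical heart, and this is where the proof must invoke the detailed arguments of \cite{Pr96,Pr98}.
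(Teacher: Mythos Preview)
The paper does not prove this proposition; it is quoted directly from Pruss \cite{Pr96,Pr98}, so there is no in-paper argument to compare your sketch against.

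Your inductive scheme has a genuine gap. The claim that $(S_k\Omega_k)^\sharp = S_k^\sharp\,\Omega_k$ whenever $\Omega_k=\Omega_k^\sharp$ is false. On $\{0,1\}$ (with $0\prec 1$) take $\Omega_k=(2,1)$, which is already $\sharp$-ordered, and $S_k=(1,3)$. Then $S_k\Omega_k=(2,3)$ has rearrangement $(3,2)$, whereas $S_k^\sharp\,\Omega_k=(3,1)\cdot(2,1)=(6,1)$. Without this identity you cannot split $(S_k\Omega_k)^\sharp$ back into $S_k^\sharp$ times the already-rearranged tail $\Omega_k$, and the right-to-left sweep stalls: after invoking the induction hypothesis on the length-$k$ chain with endpoint function $S_k\Omega_k$, you are left with $(S_k\Omega_k)^\sharp$ at position $x_k$ and no way to re-expand $\Omega_k$ into the remaining variables $x_{k+1},\ldots,x_n$.

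Your Key Lemma is also not established by the sketch you give. Testing the Riesz-type inequality against $f=\one_{\{y_2\}}$ yields only $h(y_2)\le h(0)$ for every $y_2$, i.e.\ that $h$ is maximised at $0$; it does not produce the full chain $h(0)\ge h(1)\ge h(-1)\ge h(2)\ge\cdots$ required for $h=h^\sharp$. You acknowledge this and defer to \cite{Pr96,Pr98} for the ``technical heart'', which leaves the argument no more self-contained than the paper's own citation. More importantly, even granting the Key Lemma in full, the false product-rearrangement identity above still prevents the induction from closing.
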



\subsection{Proof of Proposition \ref{prop:localTime}}
\label{S7.2}

\begin{proof}
Let $(B_R)_{R\in\N}$ be a sequence in $\Z \times [0,t]$ (recall Lemma~\ref{lem:oned}) and 
$(C_R)_{R\in\N}$ a sequence of nonnegative numbers.  Write
\begin{equation}
\label{eq:expansion}
E_0\Bigg(\exp\Bigg\{\sum_{R\in\N} C_R l_t(B_{R})\Bigg\}\Bigg)
=\sum_{n\in\N_0}\frac{1}{n!}
E_0\Bigg(\Bigg\{\sum_{R\in\N} C_R l_t(B_R)\Bigg\}^n\Bigg).
\end{equation}
The $n$-th moments in (\ref{eq:expansion}) may be rewritten as
\begin{equation}
\label{eq:expectation}
\sum_{R_1, \ldots, R_n \in\N}
\Bigg(\prod_{i=1}^{n}C_{R_i}\Bigg) E_0\Bigg(\prod_{i=1}^{n} l_t(B_{R_i})\Bigg).
\end{equation}
Write out
\begin{equation}
\label{eq:local}
\prod_{i=1}^{n} l_t(B_{R_i}) 
= \int_{0}^t ds_1 \ldots \int_0^t ds_n\,\,
\one\big\{X^{\kappa}(s_1)\in B_{R_1},\ldots X^{\kappa}(s_n) \in B_{R_n}\big\},
\end{equation}
so that the second factor under the sum in (\ref{eq:expectation}) equals
\begin{equation}
\label{eq:localexp}
\int_0^t ds_1 \ldots \int_0^t ds_n\,\, 
P_0\Big( X^{\kappa}(s_1) \in B_{R_1},\ldots X^{\kappa}(s_n) \in B_{R_n}\Big).
\end{equation}
Fix a choice of $(s_1, \ldots, s_n)\in [0,t]^n$, and let $B_{R_{s_i}}$ be the spatial part 
of $B_{R_i} \cap (\Z\times\{s_i\})$. Without loss of generality we may assume that 
$s_1< s_2 <\ldots < s_n$, so that the probability in (\ref{eq:localexp}) becomes
($x_0=0$, $s_0=0$)
\begin{equation}
\label{eq:proba}
\sum_{x_1, \ldots, x_n \in \Z}
\Bigg(\prod_{i=0}^{n} \one\Big\{x_i \in B_{R_{s_i}}\Big\}\Bigg)
\Bigg(\prod_{i=0}^{n-1} p_{s_{i+1}-s_i}^{\kappa}(x_i,x_{i+1})\Bigg).
\end{equation}
An application of Proposition \ref{prop:rearrangeineq2} gives that (\ref{eq:proba})
is bounded from above by
\begin{equation}
\label{eq:rearrange}
\sum_{x_1, \ldots x_n \in \Z}
\Bigg(\prod_{i=0}^{n} \one\Big\{x_i \in B^{\sharp}_{R_{s_i}}\Big\}\Bigg)
\Bigg(\prod_{i=0}^{n-1} p_{s_{i+1}-s_i}^{\kappa}(x_i,x_{i+1})\Bigg),
\end{equation}
so that, by (\ref{eq:localexp}),
\begin{equation}
\label{eq:localcomp}
E_0\Bigg(\prod_{i=1}^{n} l_t(B_{R_i})\Bigg)
\leq E_0\Bigg(\prod_{i=1}^{n} l_t(B^{\sharp}_{R_i})\Bigg).
\end{equation}
Inserting this back into (\ref{eq:expansion}) and (\ref{eq:expectation}), we get the claim.
\end{proof}


\section{Proof of Proposition \ref{prop:goodblocks}}
\label{S8}

In Section~\ref{S8.1} we introduce some notation and state two more propositions, 
Propositions~\ref{prop:counting}--\ref{prop:localcontrol} below, whose proof is given 
in Sections \ref{S8.3}--\ref{S8.4}. In Section~\ref{S8.2} we give the proof of 
Proposition~\ref{prop:goodblocks} subject to these propositions.


\subsection{Two more propositions}
\label{S8.1}

Henceforth we assume that $\alpha$ in \eqref{Bblocks} takes the form $\alpha =4M\kappa$ 
with $M$ a constant that will be determined later on. Recall the definition of $\pi_1$ below 
\eqref{PiL} and of $\bar\xi$ in \eqref{xibardef}.

\bd{pedestaldef}
The subpedestal of $B_1^{A,4M\kappa}(x,k)$ is (see Fig.~{\rm \ref{Defsufficient}})
\begin{equation}
\label{subpedestal}
\begin{aligned}
B_{1,\mathrm{sub}}^{A,4M\kappa}(x,k)=\Big\{&y \in \pi_1\big(B_1^{A,4M\kappa}(x,k)\big)\colon\,\\
& |y(j)-z(j)|\geq 2M\kappa A,\,j\in\{1,2,\dots,d\}\,\forall\,z\in
\partial\pi_1\big(B_1^{A,4M\kappa}(x,k)\big)\Big\}\times \{kA\}.
\end{aligned}
\end{equation}
\ed

\bd{sufficientblock}
Let $\varepsilon >0$, and $k,n\in\N_0$ such that $n\geq k$. A block $B_1^{A,4M\kappa}(x,k)$ 
is called $\varepsilon$-sufficient at level $n$ when, for every $y \in \pi_1(B_{1,\mathrm{sub}}^{A,
4M\kappa}(x,k))$,
\begin{equation}
\label{good}
E_y\Bigg(\exp\bigg\{\int_{0}^{A}\overline{\xi}(X^{\kappa}(s),A(n-k)-s)\, ds\bigg\}
\one{\big\{N(X^{\kappa},A)\leq M\kappa A\big\}}\Bigg) 
\leq e^{\varepsilon A}.
\end{equation}
Otherwise it is called $\varepsilon$-insufficient at level $n$. A subpedestal is called 
$\varepsilon$-(in)sufficient at level $n$ when its corresponding block is 
$\varepsilon$-(in)sufficient at level $n$.
\ed

\begin{figure}[htbp]
\begin{center}
\setlength{\unitlength}{0.35cm}
\begin{picture}(22,15)(0,2)
\put(-1,5){\vector(1,0){22}}
\put(-1,5){\vector(0,1){11}}
\put(7,15){\line(1,0){7}}
\put(14,15){\line(0,-1){9}}
\put(14,6){\line(-1,0){7}}
\put(7,6){\line(0,1){2.25}}
\put(7,12.75){\line(0,1){2.25}}
\linethickness{1mm}
\put(7,8.25){\line(0,1){4.5}}
\put(22,4.8){time}
\put(-2,17){space}
\put(7.5,9){$B_1^{A,4M\kappa}(x,k)$}
\put(6.3,3.5){$kA$}
\put(12.5,3.5){$(k+1)A$}
\put(-10,5.7){$(x(j)-1)4M\kappa A$}
\put(-10,14.5){$(x(j)+1)4M\kappa A$}
\end{picture}
\caption{\small The thick line is the subpedestal.}
\label{Defsufficient}
\end{center}
\end{figure}
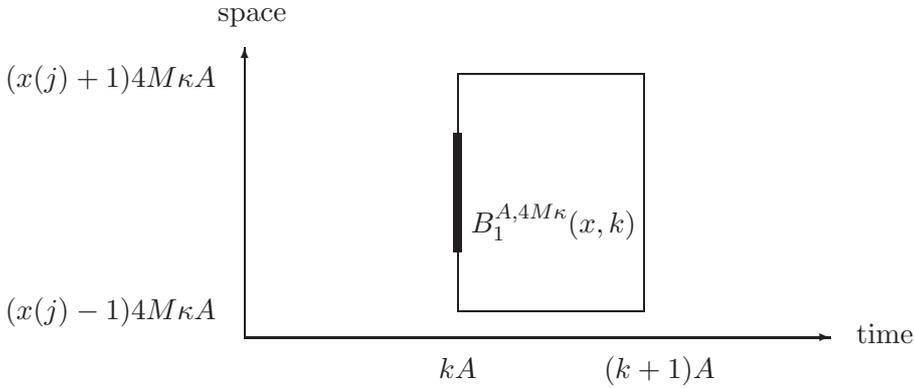

\begin{proposition}
\label{prop:counting}
Let $A>1$. There is a constant $C_3>0$ such that for all $n\in\N$ the number of different 
sequences of subpedestals $B_{1,\mathrm{sub}}^{A,4M\kappa}(0,0), 
B_{1,\mathrm{sub}}^{A,4M\kappa}(x_1,1),\ldots, B_{1,\mathrm{sub}}^{A,4M\kappa}(x_{n-1},n-1)$ 
with the property that there is a path $\Phi\colon\,[0,An]\to\Z^d$ with at most $M\kappa An$ 
jumps satisfying $\Phi(kA)\in B_{1,\mathrm{sub}}^{A,4M\kappa}(x_k,k)$, $k\in\{0,1,\ldots,n-1\}$,
is bounded from above by $e^{C_3n}$.
\end{proposition}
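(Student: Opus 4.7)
The plan is to convert the global budget of at most $M\kappa An$ jumps on $\Phi$ into a bound on the total $\ell_1$-length of the sequence $x_0,x_1,\ldots,x_{n-1}$, and then count the sequences meeting that bound by a generating-function argument.

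First, I would unpack Definition~\ref{pedestaldef}. Applying \eqref{subpedestal} with $\alpha=4M\kappa$ shows that, in each coordinate $j$,
\begin{equation*}
B_{1,\mathrm{sub}}^{A,4M\kappa}(x,k)\cap\left(\Z^d\times\{kA\}\right) \;\subseteq\; \prod_{j=1}^d\bigl[x(j)\cdot 4M\kappa A-2M\kappa A,\;x(j)\cdot 4M\kappa A+2M\kappa A\bigr]\times\{kA\},
\end{equation*}
i.e.\ the subpedestal is the central half-sized sub-box of $B_1^{A,4M\kappa}(x,k)$. Hence if $\Phi(kA)\in B_{1,\mathrm{sub}}^{A,4M\kappa}(x_k,k)$ and $\Phi((k+1)A)\in B_{1,\mathrm{sub}}^{A,4M\kappa}(x_{k+1},k+1)$, then writing $\Delta_k:=x_{k+1}-x_k$, the triangle inequality yields for every $j$,
\begin{equation*}
|\Phi((k+1)A)(j)-\Phi(kA)(j)|\;\geq\;(|\Delta_k(j)|-1)^+\cdot 4M\kappa A.
\end{equation*}
Let $N_k$ denote the number of jumps of $\Phi$ in $[kA,(k+1)A)$, so that $\sum_j|\Phi((k+1)A)(j)-\Phi(kA)(j)|\leq N_k$. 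Summing the previous inequality over $j$ and using $\sum_j(|\Delta_k(j)|-1)^+\geq\|\Delta_k\|_1-d$ gives the key per-step estimate
\begin{equation*}
\|\Delta_k\|_1\;\leq\; d+\frac{N_k}{4M\kappa A}.
\end{equation*}

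Next, summing over $k=0,1,\ldots,n-2$ and using the global budget $\sum_k N_k\leq M\kappa An$ produces
\begin{equation*}
\sum_{k=0}^{n-2}\|x_{k+1}-x_k\|_1 \;\leq\; dn+\tfrac{n}{4}\;=:\;L.
\end{equation*}
Since $x_0=0$ is forced by the first subpedestal, I would be left to count tuples $(\Delta_0,\ldots,\Delta_{n-2})\in(\Z^d)^{n-1}$ with $\sum_k\|\Delta_k\|_1\leq L$. Using the elementary generating identity $\sum_{\Delta\in\Z^d}z^{\|\Delta\|_1}=\bigl(\tfrac{1+z}{1-z}\bigr)^d$ and the Markov inequality $\one\{S\leq L\}\leq z^{S-L}$ valid for all $z\in(0,1)$ and $S\geq 0$, this count is bounded by
\begin{equation*}
z^{-L}\left(\frac{1+z}{1-z}\right)^{d(n-1)}.
\end{equation*}
Taking $z=\tfrac{1}{2}$ yields $2^{(d+1/4)n}\cdot 3^{d(n-1)}\leq e^{C_3 n}$ with, e.g., $C_3=d\log 6+\tfrac{1}{4}\log 2$, which is manifestly independent of $A$, $M$, $\kappa$ and $n$.

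The main point of the argument is the per-step geometric inequality: it uses crucially that a subpedestal is separated from the boundary of its enclosing $B_1$-block by a margin $2M\kappa A$, so that moving to a non-neighboring subpedestal costs at least $4M\kappa A$ jumps per unit of $\ell_1$-distance in the index space. The factors of $M\kappa A$ then cancel exactly against the global jump budget $M\kappa An$, leaving a linear-in-$n$ bound on $\sum\|\Delta_k\|_1$ that is independent of the scaling parameters; everything else is standard combinatorics.
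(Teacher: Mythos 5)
Your proof is correct, and the key geometric ingredient is the same as the paper's: moving from the subpedestal at time $kA$ to the subpedestal at time $(k+1)A$ costs at least $4M\kappa A$ jumps per unit of index-distance beyond the allowed slack of $d$, so the jump budget $M\kappa An$ translates into $\sum_k\|x_{k+1}-x_k\|_1\leq dn+\tfrac{n}{4}$. Where you depart from the paper is the counting step, and your route is cleaner. The paper defines $a_k=\|x_k-x_{k-1}\|_1$ and appeals to Hardy--Ramanujan/Erd\H{o}s to bound the number of admissible magnitude sequences $(a_k)$ by $an\,e^{b\sqrt n}$, then multiplies by $3^{dn}$ for the choice of signs of each coordinate increment. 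That argument is not airtight as stated: the Hardy--Ramanujan asymptotics is for unordered partitions, whereas ordered non-negative integer sequences of length $n-1$ summing to $O(n)$ number $e^{\Theta(n)}$, not $e^{O(\sqrt n)}$; and for $d\geq 2$ the pair (magnitude $a_k$, sign pattern) does not determine the increment $\Delta_k$ uniquely (e.g.\ $(1,2)$ and $(2,1)$ share $a_k=3$ and sign pattern $(+,+)$), so another combinatorial factor is implicitly missing. Your generating-function/Chernoff bound $z^{-L}\bigl(\tfrac{1+z}{1-z}\bigr)^{d(n-1)}$ counts the $d$-dimensional increments directly, sidesteps both issues, and delivers an explicit $C_3=d\log 6+\tfrac14\log2$. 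Since the proposition only asserts an $e^{O(n)}$ bound, the paper's conclusion is not actually threatened, but your version is the rigorous one. One small cosmetic point: after summing $\|\Delta_k\|_1\leq d+N_k/(4M\kappa A)$ over $k=0,\dots,n-2$ you get $d(n-1)+\tfrac n4$, which you relax to $dn+\tfrac n4$; that is fine, but you could keep the tighter bound if you want a marginally smaller $C_3$.
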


\begin{proposition}
\label{prop:localcontrol}
Fix $\varepsilon>0$. Let $\delta=\tfrac14\varepsilon$ in the definition of $\overline{\xi}$ and $A>1$. Then there is a $\kappa_0>0$ 
such that, for all $\kappa\geq\kappa_0$ and $\xi$-a.s.\ for all $n\in\N$, all blocks $B_{1}^{A,4M\kappa}
(x,k)$, $x\in\Z^d$, $k\in\N$, $k\leq n$, are $\varepsilon$-sufficient at level $n$.
\end{proposition}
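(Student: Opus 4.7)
The plan is to reduce the bound in \eqref{good} to a large-$\kappa$ spectral estimate for the discrete Schr\"odinger operator $\kappa\Delta+V$ on the cube $Q:=\pi_1(B_1^{A,4M\kappa}(x,k))$ of side $\sim\kappa A$, where $V$ is a static majorant of $\overline{\xi}$ that inherits an $L^\infty$ bound and a small spatial average from the good-$1$-block condition at scale $\alpha=4M\kappa$.

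First, from the subpedestal condition of Definition~\ref{pedestaldef} together with $N(X^\kappa,A)\le M\kappa A$, the walk satisfies $X^\kappa([0,A])\subseteq Q$: by Definition~\ref{pedestaldef} the starting point $y$ lies at $\ell^\infty$-distance at least $2M\kappa A$ from $\partial Q$, whereas at most $M\kappa A$ jumps cannot displace $X^\kappa$ by more than $M\kappa A$ in $\ell^\infty$ norm. Next, I would partition $[0,A]$ into the $Am$ slices $I_j=[j/m,(j+1)/m)$, set
\begin{equation*}
V_j(z):=\sup_{r\in[A(n-k)-(j+1)/m,\,A(n-k)-j/m)}\overline{\xi}(z,r),
\end{equation*}
and dominate $\int_0^A\overline{\xi}(X^\kappa(s),A(n-k)-s)\,ds\le\sum_{j=0}^{Am-1}\int_{I_j}V_j(X^\kappa(s))\,ds$. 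Directly from the definition of $\overline{\xi}$ and \eqref{BRgood} applied at the single time $s_j=A(n-k)-(j+1)/m$, each $V_j$ satisfies (i) $0\le V_j(z)<2\delta A^d$ pointwise, and (ii) $|Q|^{-1}\sum_{z\in Q}V_j(z)\le 2\delta$. Since $\overline{\xi}\equiv 0$ on bad $1$-blocks, property (ii) holds regardless of whether $Q$ straddles good and bad $1$-blocks at time level $n-k-1$.

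Iterating the Markov property of $X^\kappa$ at the endpoints $j/m$ and using the Feynman--Kac representation, the expectation in \eqref{good} is at most
\begin{equation*}
\bigl(P_{1/m}^{V_0}P_{1/m}^{V_1}\cdots P_{1/m}^{V_{Am-1}}\one_Q\bigr)(y),
\end{equation*}
where $P_\tau^V:=\exp\{\tau(\kappa\Delta_Q+V)\}$ denotes the Feynman--Kac semigroup on $Q$ with Dirichlet boundary. The spectral estimate in Appendix~\ref{Appendix*} then furnishes, for every $V$ obeying (i)--(ii) and every $y\in Q$, a bound of the form $(P_{1/m}^V f)(y)\le\exp\{(1/m)(|Q|^{-1}\sum_{z}V(z)+\eta(\kappa))\}\|f\|_\infty$ with $\eta(\kappa)\downarrow 0$ as $\kappa\to\infty$, uniformly in the position of $Q$. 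Iterating across $j$ and substituting $|Q|^{-1}\sum V_j\le 2\delta=\varepsilon/2$ gives the upper bound $\exp\{A(\varepsilon/2+\eta(\kappa))\}$, which is at most $e^{\varepsilon A}$ for all $\kappa\ge\kappa_0=\kappa_0(\varepsilon,A,M,m,d)$. Since the derivation is deterministic given the good-block condition built into the definition of $\overline{\xi}$, the conclusion holds $\xi$-a.s.\ simultaneously for every $n$, every block $B_1^{A,4M\kappa}(x,k)$ and every $y$ in its subpedestal.

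The technical heart of the argument is the spectral bound invoked above. Because $Q$ has side of order $\kappa A$, the Dirichlet spectral gap of $-\kappa\Delta_Q$ is only of order $1/(\kappa A^2)\to 0$, so $V$ cannot be treated as a small perturbation of $-\kappa\Delta_Q$ in the standard operator-theoretic sense. The argument in Appendix~\ref{Appendix*} must therefore work at the level of the Rayleigh quotient: property (ii) provides the main contribution via a nearly-constant test function, while property (i) is used to prevent eigenfunctions from concentrating on small subregions of $Q$, in a discrete Faber--Krahn-style computation tuned to the scaling $|Q|\sim(\kappa A)^d$.
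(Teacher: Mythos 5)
Your reduction of \eqref{good} to a large-$\kappa$ spectral estimate for a time-sliced Schr\"odinger operator is in the right spirit and closely parallels the paper's Lemma~\ref{lem:FKestimate}: both slice $[0,A]$ into $Am$ intervals of length $1/m$, pass to static potentials $\overline{\xi}_k$ obtained by taking a time-supremum, and reduce to bounding a principal eigenvalue. The use of the subpedestal together with the jump count to confine the walk to $Q$ is a clean (and arguably more natural) substitute for the paper's confinement to $Q^{\kappa\log\kappa}$, and presents no difficulty.

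However, the final paragraph contains a genuine gap, and it concerns precisely the point you flagged as ``the technical heart.'' The two properties you extract --- (i) $0\le V_j<2\delta A^d$ pointwise and (ii) $|Q|^{-1}\sum_{z\in Q}V_j(z)\le 2\delta$ --- are \emph{not} sufficient for any spectral bound of the form $\lambda_1(\kappa\Delta_Q+V)\le 2\delta+\eta(\kappa)$. Take $V$ equal to its allowed maximum $2\delta A^d$ on a subcube $Q'\subset Q$ of side $L'$, and zero elsewhere. Property (ii) only forces $L'\lesssim |Q|^{1/d}/A\sim M\kappa$. The Dirichlet cost of confining a test function to $Q'$ is of order $\kappa/L'^2$, which for $L'\sim M\kappa$ is of order $1/(M^2\kappa)\to 0$. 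Hence the Rayleigh quotient can be made arbitrarily close to $2\delta A^d$ as $\kappa\to\infty$, which exceeds the target $\varepsilon=4\delta$ by a factor of order $A^d$. So a Faber--Krahn-style argument based only on (i) and (ii) on the single large box $Q$ cannot give the claimed bound; the potential can concentrate at an intermediate scale that the large-box average does not see.

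Your description of what Appendix~\ref{Appendix*} ``must'' do is therefore incorrect. The actual Lemma~\ref{lem:eigenvalue} does not work on a single box of side $\sim\kappa A$; it superimposes a \emph{Neumann} decomposition of $\Z^d$ into boxes $B(x)$ of \emph{$\kappa$-independent} side (the paper takes $B(x)=\pi_1(B_1^A(x,0))$, of side $2A$), with the hypothesis $|B(x)|^{-1}\sum_{y\in B(x)}V(y)\le 2\delta$ holding on \emph{each} small box. This is strictly stronger than your (ii), and it is what the good-$1$-block condition at scale $\alpha=1$ (via Definition~\ref{deltagood}) actually supplies through \eqref{xibardef}. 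The small box size gives a $\kappa$-independent Neumann spectral gap $\eta>0$ on $(\R\one)^\perp$, which is then used to absorb the $O(1/\kappa)$ cross-terms once $\kappa$ is large; the concentration scenario above is ruled out because the local average condition on small boxes forbids $V$ from being near $2\delta A^d$ on any set large compared with $A$. To repair your argument you should keep your Feynman--Kac/time-slicing setup but replace the Dirichlet-on-$Q$ spectral estimate by the paper's Neumann-decomposition bound (Lemma~\ref{lem:eigenvalue}), and replace (ii) by the local averaging on $\kappa$-independent boxes, which is what good $1$-blocks of $\xi$ in \eqref{xibardef} actually give you.
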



\subsection{Proof of Proposition \ref{prop:goodblocks} subject to two propositions}
\label{S8.2}

\begin{proof}
The proof comes in 2 Steps.

\medskip\noindent
{\bf 1.}
Fix $\varepsilon>0$ and put $\delta=\tfrac14\varepsilon$. Choose $\kappa\geq\kappa_0$ according 
to Proposition~\ref{prop:localcontrol}. Then the tail estimate $P\big(\mathrm{POISSON}(\lambda)
\geq k\big) \leq e^{-\lambda}(\lambda e)^k/k^k$, $k\geq 2\lambda +1$, for Poisson-distributed random 
variables with mean $\lambda$ shows that, for $M>0$ large enough,
\begin{equation}
\label{eq:outerpart}
\begin{aligned}
E_0&\Bigg(
\exp\bigg\{\int_0^{An}\overline{\xi}(X^{\kappa}(s),An-s)\, ds\bigg\}
\one\big\{N(X^{\kappa},An)>M\kappa An\big\}\Bigg)\\
&\leq e^{2\delta A^{d+1}n} e^{-2d\kappa An} \exp\big\{-M\kappa An[\log(M/2d)-1]\big\},
\end{aligned}
\end{equation}
where we use \eqref{xibardef}. Since we later let $\kappa\to\infty$, (\ref{eq:outerpart}) shows that it 
is enough to concentrate on contributions coming from paths with at most $M\kappa An$ jumps. To 
that end, fix a $\Z^d$-valued sequence of vertices $x_0,x_1,\ldots, x_{n-1}$ such that $x_0=0$ and 
such that there is a path that starts in $0$, makes $0\leq j\leq M\kappa An$ jumps, and is 
in the subpedestal $B_{1,\mathrm{sub}}^{A,4M\kappa}(x_k,k)$ at time $kA$ for $k\in\{0,1,\ldots, n-1\}$. 
By the Markov property of $X^{\kappa}$ applied at times $kA$, $k\in\{1,2,\ldots, n-1\}$,
\begin{equation}
\label{eq:timepart}
\begin{aligned}
&E_0\Bigg(
\exp\bigg\{\int_0^{An}\overline{\xi}(X^{\kappa}(s),An-s)\, ds\bigg\}
\one\big\{N(X^{\kappa},An)\leq M\kappa An\big\}\\
&\qquad\qquad\times \prod_{k=1}^{n-1}
\one\Big\{X^{\kappa}(kA)\in B_{1,\mathrm{sub}}^{A,4M\kappa}(x_k,k) \Big\}\Bigg)\\
&\quad \leq \prod_{k=0}^{n-1} \sup_{y\in \pi_1\big(B_{1,\mathrm{sub}}^{A,4M\kappa}(x_k,k)\big)}
E_{y}\Bigg(\exp\bigg\{\int_{0}^{A}\overline{\xi}(X^{\kappa}(s),A(n-k)-s)\, ds\bigg\}\Bigg).
\end{aligned}
\end{equation}
This is at most
\begin{equation}
\label{eq:Srewrite}
\begin{aligned}
&\prod_{k=0}^{n-1}\Bigg[\sup_{y\in \pi_1\big(B_{1,\mathrm{sub}}^{A,4M\kappa}(x_k,k)\big)} 
E_{y}\bigg(\exp\Big\{\int_{0}^{A}\overline{\xi}(X^{\kappa}(s),A(n-k)-s)\, ds\Big\}
\one\big\{N(X^{\kappa}, A)\leq M\kappa A\big\}\bigg)\\
&\quad +\sup_{y\in \pi_1\big(B_{1,\mathrm{sub}}^{A,4M\kappa}(x_k,k)\big)} 
E_{y}\bigg(\exp\Big\{\int_{0}^{A}\overline{\xi}(X^{\kappa}(s),A(n-k)-s)\, ds\Big\}
\one\big\{N(X^{\kappa},A) >M\kappa A\big\}\bigg)\Bigg]\\
&= \sum_{J\subset \{0,1,\ldots,n-1\}}
\Bigg[\\
&\quad\prod_{k\in J}
\sup_{y\in \pi_1(B_{1,\mathrm{sub}}^{A,4M\kappa}(x_k,k))}E_{y}\Bigg(
\exp\bigg\{\int_{0}^{A}\overline{\xi}(X^{\kappa}(s),A(n-k)-s)\, ds\bigg\}
\one\big\{N(X^{\kappa}, A)\leq M\kappa A\big\}\Bigg)\\
&\times\prod_{k\notin J}\sup_{y\in \pi_1(B_{1,\mathrm{sub}}^{A,4M\kappa}(x_k,k))} E_{y}\Bigg(
\exp\bigg\{\int_{0}^{A}\overline{\xi}(X^{\kappa}(s),A(n-k)-s)\, ds\bigg\}
\one\big\{N(X^{\kappa},A) >M\kappa A\big\}\Bigg)\Bigg].
\end{aligned}
\end{equation}
Now, by the Poisson tail estimate mentioned above and the fact that $\overline{\xi}< 2\delta A^d$, 
the second factor under the sum in (\ref{eq:Srewrite}) may be bounded from above by
\begin{equation}
\label{eq:toomanyjumps}
\Big(e^{2\delta A^{d+1}}e^{-2d\kappa A}\exp\big\{-M\kappa A[\log(M/2d)-1]\big\}\Big)^{n-|J|}.
\end{equation}
Since, by Proposition \ref{prop:localcontrol} and our choice of $\kappa$ (see the observation 
made prior to (\ref{eq:outerpart})), all blocks $B_{1}^{A,4M\kappa}(x,k)$, $x\in\Z^d$, $k\in\N_0$, 
$k\leq n$, are $\varepsilon$-sufficient at level $n$, we may conclude that all $y\in \pi_1
(B_{1,\mathrm{sub}}^{A,4M\kappa}(x_k,k))$ with $k\in J$ are in an $\varepsilon$-sufficient 
subpedestal at level $n$. Hence, using the binomial formula, we may estimate (\ref{eq:Srewrite}) 
from above by
\begin{equation}
\label{eq:Sbound}
\begin{aligned}
\sum_{J\subset \{0,1,\ldots,n-1\}} 
&e^{A\varepsilon |J|} \Big(e^{2\delta A^{d+1}}e^{-2d\kappa A}
\exp\big\{-M\kappa A[\log(M/2d)-1]\big\}\Big)^{n-|J|}\\
&= \Big(e^{A\varepsilon}+ e^{2\delta A^{d+1}}e^{-2d\kappa A}
\exp\big\{-M\kappa A[\log(M/2d)-1]\big\}\Big)^n.
\end{aligned}
\end{equation}

\medskip\noindent
{\bf 2.}
Summing over all possible sequences $(x_i)_{i\in\{1,2,\ldots,n-1\}}$ compatible with a path $\Phi$ 
such that $\Phi(0) = 0$ and $N(\Phi,An)\leq M\kappa An$, and using (\ref{eq:timepart}--\ref{eq:Sbound}) 
and Proposition~\ref{prop:counting}, we obtain
\begin{equation}
\label{eq:lastsums}
\begin{aligned}
&E_0\Bigg(\exp\bigg\{\int_0^{An}\overline{\xi}(X^{\kappa}(s),An-s)\, ds\bigg\}
\one\big\{N(X^{\kappa},An)\leq M\kappa An\big\}\Bigg)\\
&\leq \sum_{x_1,x_2,\ldots, x_{n-1}}
E_0\Bigg(\exp\bigg\{\int_0^{An}\overline{\xi}(X^{\kappa}(s),An-s)\, ds\bigg\}
\one\big\{N(X^{\kappa},An) \leq M\kappa An\big\}\\
&\qquad\qquad\qquad\qquad\quad\times\prod_{k=1}^{n-1}
\one\Big\{X^{\kappa}(kA)\in B_{1,\mathrm{sub}}^{A,4M\kappa}(x_k,k)\Big\}\Bigg)\\
&\leq \sum_{x_1,x_2,\ldots, x_{n-1}}
\prod_{k=0}^{n-1} \sup_{y\in  B_{1,\mathrm{sub}}^{A,4M\kappa}(x_k,k)}
E_y\Bigg(\exp\bigg\{\int_0^{An}\overline{\xi}(X^{\kappa}(s),An-s)\, ds\bigg\}
\Bigg)\\
&\leq e^{C_3n}\Big(e^{A\varepsilon}+ e^{2\delta A^{d+1}}e^{-2d\kappa A}
\exp\big\{-M\kappa A[\log(M/2d)-1]\big\}\Big)^n.
\end{aligned}
\end{equation}
Combining (\ref{eq:outerpart}--\ref{eq:lastsums}), we get
\begin{equation}
\label{eq:takinglimit1}
\begin{aligned}
&\limsup_{\kappa\to\infty} \limsup_{n\to\infty}
\frac{1}{An} \log E_0\Bigg(\exp\bigg\{\int_0^t\overline{\xi}(X^{\kappa}(s),t-s)\, ds\bigg\}\Bigg)\\
&\leq \frac{C_3}{A} +\frac{1}{A}\limsup_{\kappa\to\infty}
\log\Big(e^{A\varepsilon}+ e^{2\delta A^{d+1}}e^{-2d\kappa A}
\exp\big\{-M\kappa A[\log(M/2d)-1]\big\}\Big)
= \frac{C_3}{A} + \varepsilon.
\end{aligned}
\end{equation}
Since $\varepsilon=4\delta$, this yields the claim.
\end{proof}


\subsection{Proof of Proposition \ref{prop:counting}}
\label{S8.3}

\begin{proof}
Write $\|\cdot\|$ for the $\ell^1$-norm on $\Z^d$. Let $B_{1,\mathrm{sub}}^{A,4M\kappa}(0,0), 
B_{1,\mathrm{sub}}^{A,4M\kappa}(x_1,1),\ldots,B_{1,\mathrm{sub}}^{A,4M\kappa}(x_{n-1},$ $n-1)$
be a sequence of subpedestals that may be crossed by a path $\Phi$ with at most $M\kappa An$ 
jumps. Since $\Phi$ needs at least $(\|x_k-x_{k-1}\|-d)_{+}4M\kappa A$ jumps to go from 
$B_{1,\mathrm{sub}}^{A,4M\kappa}(x_{k-1},k-1)$ to $B_{1,\mathrm{sub}}^{A,4M\kappa}(x_{k},k)$, 
$k\in\{1,2,\ldots,n-1\}$, we obtain the bound
\begin{equation}
\label{eq:differencesum1}
\sum_{k=1}^{n-1}(\|x_k-x_{k-1}\|-d)_+ \leq \frac{M\kappa An}{4M\kappa A} = \frac{n}{4},
\end{equation}
which implies that
\begin{equation}
\label{eq:differencesum2}
\sum_{k=1}^{n-1} \|x_k-x_{k-1}\| \leq \frac{(1+4d)n}{4}.
\end{equation}
As shown in Hardy and Ramanujan~\cite{HR18} and Erd\"os~\cite{E42}, there are $a,b>0$ such 
that the number of integer-valued sequences $(a_k)_{k\in\N}$ such that $\sum_{k\in\N} a_k \leq 
(1+4d)n/4$ is bounded from above by $ane^{b\sqrt{n}}$. To conclude, define $a_k = \|x_k-x_{k-1}\|$, 
$k\in\{1,2,\ldots, n-1\}$, and note that the sequence $(a_k)_{k\in\{1,2,\ldots,n-1\}}$ determines the 
sequence $(x_k)_{k\in\{0,1,\ldots,n-1\}}$ uniquely when it is known for all $k\in\{1,2,\ldots,n-1\}$ 
and all $j\in\{1,2,\ldots,d\}$ whether $x_k(j)-x_{k-1}(j)$ is positive, zero or negative. Consequently, 
the number of different subpedestals $B_{1,\mathrm{sub}}^{A,4M\kappa}(0,0), 
B_{1,\mathrm{sub}}^{A,4M\kappa}(x_1,1),\ldots,B_{1,\mathrm{sub}}^{A,4M\kappa}(x_{n-1},n-1)$ 
that may be crossed by a path $\Phi$ with at most $M\kappa An$ jumps is bounded from above by 
$3^{dn}ane^{b\sqrt{n}} \leq e^{C_3n}$ for some $C_3>0$.
\end{proof}


\subsection{Proof of Proposition \ref{prop:localcontrol}}
\label{S8.4}

The proof of Proposition~\ref{prop:localcontrol} is given in Section \ref{S8.4.3} subject to
Lemmas~\ref{lem:FKestimate}--\ref{lem:eigenvalue} below, which are stated in 
Sections~\ref{S8.4.1}--\ref{S8.4.2}. The proof of the first lemma is given in Section~\ref{S8.4.1}, 
the proof of the second lemma is deferred to Appendix~\ref{Appendix*}. 


\subsubsection{A time-dependent Feynman-Kac estimate}
\label{S8.4.1}

Recall \eqref{xibardef}. Abbreviate
\begin{equation}
\label{eq:int}
Q^{\kappa\log\kappa} = (-\kappa\log\kappa,\kappa\log\kappa)^d\cap\Z^d.
\end{equation} 

\begin{lemma}
\label{lem:FKestimate}
Fix $A>1$ and $m>0$ such that $Am\in\N$. There is a $\kappa_0=\kappa_0(M,A)$ 
such that, $\xi$-a.s.\ for all $x\in\Z^d$,
\begin{equation}
\label{eq:timedepFK}
\log E_x\Bigg(\exp\bigg\{
\int_0^{A}\overline{\xi}(X^{\kappa}(s),A-s)\, ds\bigg\}
\one\big\{N(X^{\kappa},A)\leq M\kappa A\big\}\Bigg)
\leq \frac{\kappa}{m}\sum_{k=1}^{Am}\lambda_{1}(\overline{\xi}_k/\kappa),
\quad \kappa\geq\kappa_0,
\end{equation}
where $\lambda_1(\overline{\xi}_k/\kappa)$ is the top of the spectrum of the operator
$\Delta + \frac{1}{\kappa} \sup_{r\in [(k-1)/m,k/m)}\overline{\xi}(\cdot,r)$, $k\in\{1,2,\ldots,Am\}$.
\end{lemma}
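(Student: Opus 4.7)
The plan is to prove Lemma~\ref{lem:FKestimate} by combining a time-discretization of the Feynman--Kac integrand with a spectral bound on the resulting time-independent semigroups, using the jump-count indicator to confine the walk to a finite cube.

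First, I would observe that on the event $\{N(X^\kappa,A)\le M\kappa A\}$, for $\kappa\ge\kappa_0(M,A)$ large enough the walk $X^\kappa$ started at $x$ cannot leave the cube $\Lambda_x=x+Q^{\kappa\log\kappa}$ within time $A$, since it makes at most $M\kappa A\ll\kappa\log\kappa$ steps in each coordinate direction. This lets me replace the free generator $\kappa\Delta$ on $\Z^d$ by its Dirichlet restriction $\kappa\Delta^\Lambda$ to $\Lambda_x$, which is self-adjoint on $\ell^2(\Lambda_x)$ and has a finite bounded spectrum.

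Next, I would discretize time into $Am$ subintervals of length $1/m$ and, after the change of variables $u=A-s$, set $V_k(y)=\sup_{r\in[(k-1)/m,k/m)}\overline\xi(y,r)$ and bound the integrand on each subinterval by the stationary potential $V_k$. Iterating the Markov property of $X^\kappa$ at the times $k/m$ then rewrites the truncated expectation as an inner product
\[
\Bigl\langle\delta_x,\;e^{(1/m)L^\Lambda_{Am}}\cdots e^{(1/m)L^\Lambda_1}\one_\Lambda\Bigr\rangle,
\]
with $L^\Lambda_k=\kappa\Delta^\Lambda+V_k$. Each factor is self-adjoint on $\ell^2(\Lambda_x)$ with operator norm $e^{(1/m)\mu^\Lambda_k}$, where $\mu^\Lambda_k$ is its top Dirichlet eigenvalue, and domain monotonicity gives $\mu^\Lambda_k\le\kappa\lambda_1(\overline\xi_k/\kappa)$. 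A naive Cauchy--Schwarz estimate $|\langle\delta_x,P\one_\Lambda\rangle|\le\|\delta_x\|_2\|P\|_{\mathrm{op}}\|\one_\Lambda\|_2$ then delivers an upper bound of the form $\sqrt{|\Lambda|}\exp\{(\kappa/m)\sum_k\lambda_1(\overline\xi_k/\kappa)\}$.

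The main obstacle will be eliminating the volume prefactor $\sqrt{|\Lambda|}=(2\kappa\log\kappa)^{d/2}$, which contributes an unwanted $O(\log\kappa)$ term upon taking logarithms and so is not absorbed into the stated bound. To remove it I would refine the operator-norm step: expand $\one_\Lambda$ in the Perron eigenbasis of $L^\Lambda_k$, use positivity together with a Harnack-type lower bound on the positive top eigenfunction $\phi^{(k)}_1$ (normalized in $\ell^2$) to show that the dominant overlap $\langle\delta_x,\phi^{(k)}_1\rangle\langle\phi^{(k)}_1,\one_\Lambda\rangle$ is $O(1)$ rather than $O(\sqrt{|\Lambda|})$, and argue that the contribution of the remaining eigencomponents is damped by the spectral gap of $\kappa\Delta^\Lambda+V_k$ (which grows linearly in $\kappa$) after even a single $1/m$ step. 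This fine spectral analysis, together with the comparison of Dirichlet eigenvalues on $\Lambda_x$ with the full $\Z^d$ spectrum top, is what I expect Appendix~\ref{Appendix*} to supply.
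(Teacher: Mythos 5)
Your overall strategy matches the paper's: confine the walk to $Q^{\kappa\log\kappa}$ via the jump-count indicator, rescale to a rate-$2d$ walk, discretize time into $Am$ blocks of length $1/m$, bound the time-dependent potential on each block by the time-supremum $\overline\xi_k$, iterate the Markov property, and reduce each factor to the $\ell^2$-spectral data of a discrete Schr\"odinger operator. Up to this point the two arguments are essentially identical.

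Where you diverge is in what comes after the spectral representation \eqref{eq:spectralrep}. The paper does \emph{not} carry out the Harnack/spectral-gap refinement you propose: it simply expands $\one_{Q^{\kappa\log\kappa}} = \sum_j \mu_j^k v_j^k$ and asserts directly that \eqref{eq:spectralrep} is bounded by $e^{(\kappa/m)\lambda_1^D}$, with no further justification. Your unease about the $\sqrt{|\Lambda|}$ prefactor from Cauchy--Schwarz is well founded and is in fact a real subtlety in the paper's own argument: by Riesz--Thorin duality $\|e^{tH^D}\|_{\ell^\infty\to\ell^\infty}\geq\|e^{tH^D}\|_{\ell^2\to\ell^2}=e^{t\lambda_1^D}$, so one cannot in general dominate $\sup_x(e^{tH^D}\one)(x)$ by $e^{t\lambda_1^D}$ without exploiting the specific structure of the potential (take $V\equiv 0$: the left side is $P_x(X([0,t])\subseteq\Lambda)$, arbitrarily close to $1$ for $x$ deep inside a large box, while $e^{t\lambda_1^D}<1$). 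So the concern you flagged is substantive, and the overlap $\langle\delta_x,\phi_1^{(k)}\rangle\langle\phi_1^{(k)},\one\rangle$ being $O(1)$ is indeed the quantity that needs control.

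That said, two specific parts of your plan do not survive scrutiny. First, your guess that Appendix~\ref{Appendix*} supplies the Perron--Harnack overlap estimate is wrong: Appendix~\ref{Appendix*} proves Lemma~\ref{lem:eigenvalue}, namely $\lambda_1(\Delta+V/\kappa)\leq 4\delta/\kappa$ on the full lattice via a Neumann-Laplacian tile decomposition; it is used downstream in Proposition~\ref{prop:localcontrol} to bound $\lambda_1$, and is entirely orthogonal to the volume-prefactor issue. Second, your claim that the spectral gap of $\kappa\Delta^\Lambda+V_k$ on $\Lambda\sim Q^{\kappa\log\kappa}$ ``grows linearly in $\kappa$'' has the wrong sign: the Dirichlet gap of $\kappa\Delta^\Lambda$ on a box of side $L\sim\kappa\log\kappa$ scales like $\kappa/L^2\sim 1/(\kappa(\log\kappa)^2)$, which \emph{shrinks} as $\kappa\to\infty$; the perturbation $V_k=\overline\xi_k$ is bounded and does not change this order. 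So a single time step of length $1/m$ does not damp the sub-leading eigencomponents in the way you suggest, and the closing argument you sketch would need to be replaced by something else (e.g.\ exploiting that $\overline\xi_k$ is bounded and has controlled spatial averages, so that the Perron eigenfunction on the relevant box is delocalized and $\langle v_1^k,\one\rangle\|v_1^k\|_\infty=O(1)$).

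In short: your reduction to spectral data is the paper's, your worry about the $\sqrt{|\Lambda|}$ factor is a genuine one that the paper's proof glosses over, but your proposed Harnack/gap fix is not what the paper does, misattributes the content of Appendix~\ref{Appendix*}, and contains a sign error in the $\kappa$-dependence of the spectral gap.
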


\begin{proof}
We give the proof for $x=0$. The proof for $x\in\Z^d\backslash\{0\}$ goes along the same lines.
First note that we may rewrite the expectation in the left-hand side of (\ref{eq:timedepFK}) as
\begin{equation}
\label{eq:rewrite}
 E_0\Bigg(\exp\bigg\{
\frac{1}{\kappa}\int_0^{\kappa A}\overline{\xi}(X(s),A-s/\kappa)\, ds\bigg\}
\one\big\{N(X,\kappa A)\leq M\kappa A\big\}\Bigg),
\end{equation}
where $X$ is simple random walk with step rate $2d$. Furthermore, there is a $\kappa_0 =\kappa_0(M,A)$ 
such that $M\kappa A\leq \kappa\log\kappa$ for all $\kappa\geq \kappa_0$. Hence, by the Markov property 
of $X$ applied at times $k\kappa/m$, $k\in\{1,2,\ldots, Am\}$, we may estimate (\ref{eq:rewrite}) from above 
by
\begin{equation}
\label{eq:Markovbound}
\begin{aligned}
E_0&\Bigg(\exp\bigg\{\frac{1}{\kappa}\int_0^{\kappa A}
\overline{\xi}(X(s),A-s/\kappa)\, ds\bigg\}
\one\big\{X([0,\kappa A])\subseteq Q^{\kappa\log\kappa}\big\}\Bigg)\\
&\leq \prod_{k=1}^{Am} \sup_{ \substack{x\in\Z^d \\ \|x\|_\infty< \kappa\log\kappa} }
E_x\Bigg(\exp\bigg\{\frac{1}{\kappa}\int_0^{\kappa /m}
\overline{\xi}(X(s),k/m- s/\kappa)\, ds\bigg\}
\one\big\{X([0,\kappa/m])\subseteq Q^{\kappa\log\kappa}\big\}\Bigg).
\end{aligned}
\end{equation}
Next, for $k\in\{1,2,\ldots, Am\}$ define
\begin{equation}
\label{eq:xin}
\overline{\xi}_k(x) = \sup_{r\in [(k-1)/m,k/m)} \overline{\xi}(x,r), \qquad x\in\Z^d.
\end{equation}
Then (\ref{eq:Markovbound}) is at most
\begin{equation}
\label{eq:upperxi}
\prod_{k=1}^{Am} \sup_{ \substack{x\in\Z^d // \|x\|_\infty< \kappa\log\kappa} }
E_x\Bigg(\exp\bigg\{\frac{1}{\kappa}\int_0^{\kappa/m}
\overline{\xi}_k(X(s))\, ds\bigg\}
\one\{X([0,\kappa /m])\subseteq Q^{\kappa\log\kappa}\}\Bigg).
\end{equation}
For each $k\in\{1,2,\ldots, Am\}$, each expectation under the product in (\ref{eq:upperxi})
is a solution of the equation
\begin{equation}
\label{eq:PAMDirichlet}
\begin{aligned}
\begin{cases}
\frac{\partial u_k}{\partial t}(x,t) = \left[(\Delta + \frac{1}{\kappa}
\overline{\xi}_k(x))u_k\right](x,t),\\
u_k(x,0) = 1,
\end{cases}
\qquad \|x\|_\infty<\kappa\log\kappa, t\geq 0,
\end{aligned}
\end{equation}
with Dirichlet boundary conditions evaluated at time $\kappa/m$. However, on any finite
subset of $\Z^d$ the operator $\Delta +\frac{1}{\kappa}\overline{\xi}_k$ is a self-adjoint 
matrix. Therefore, by the spectral representation theorem, we may rewrite each expectation 
under the product in \eqref{eq:upperxi} as
\begin{equation}
\label{eq:spectralrep}
\sum_{j=1}^{|Q^{\kappa\log\kappa}|}
e^{(\kappa/m)\lambda_{j}^{D}(\overline{\xi}_k/\kappa)}
\langle v_{j}^{k},\one_{Q^{\kappa\log\kappa}}\rangle \,v_{j}^{k}(x),
\end{equation}
where $\lambda_{j}^{D}(\overline{\xi}_k/\kappa)$ is the $j$-th largest eigenvalue of 
$\Delta+\overline{\xi}_k/\kappa$ with Dirichlet boundary conditions on $Q^{\kappa\log\kappa}$, 
$j\in\{1,2,\ldots,|Q^{\kappa\log\kappa}|\}$, and the $v_j^{k}$, $j\in\{1,2,\ldots,|Q^{\kappa\log\kappa}|\}$, 
form an orthonormal system of eigenvectors such that, for all $k\in\{1,2,\ldots, Am\}$,
\begin{equation}
\label{eq:spectralthm}
\R^{|Q^{\kappa\log\kappa}|} = \mathrm{ker}\big(e^{\Delta+\overline{\xi}_k/\kappa}\big)
\oplus \mathrm{span}\big\{v_j^{k},j\in\{1,2,\ldots,|Q^{\kappa\log\kappa}|\}\big\} .
\end{equation}
(Since $e^{\Delta+\overline{\xi}_k/\kappa}$ is a strictly positive operator, $\mathrm{ker}
(e^{\Delta+\overline{\xi}_k/\kappa})=\{0\}$.) In particular, for each $k\in\{1,2,
\ldots, Am\}$ there is a sequence of real-valued numbers $(\mu_{j}^{k})_{j\in\{1,2,\ldots,|Q^{\kappa
\log\kappa}|\}}$ such that $\sum_{j=1}^{k}\mu_{j}^{k}v_j^{k} = \one_{Q^{\kappa\log\kappa}}$.
Inserting the above representation of $\one_{Q^{\kappa\log\kappa}}$ into (\ref{eq:spectralrep}),
we see that (\ref{eq:spectralrep}) is bounded from above by
\begin{equation}
\label{eq:spectralest}
e^{(\kappa/m)\lambda_1^{D}(\overline{\xi}_k/\kappa)}.
\end{equation}
Combining (\ref{eq:rewrite})--(\ref{eq:spectralest}), we get
\begin{equation}
\label{eq:spectralbound}
\log E_x\Bigg(\exp\bigg\{
\int_0^{A}\overline{\xi}(X^{\kappa}(s),A-s)\, ds\bigg\}
\one\big\{N(X^{\kappa};A)\leq M\kappa A\big\}\Bigg)
\leq \frac{\kappa}{m}\sum_{k=1}^{Am} \lambda_1^{D}(\overline{\xi}_k/\kappa).
\end{equation}
Finally, by the Rayleigh-Ritz principle we have that $\lambda_1^{D}(\overline{\xi}_k/\kappa)\leq
\lambda_1(\overline{\xi}_k/\kappa)$, where $\lambda_1(\overline{\xi}_k/\kappa)$ is the top of 
the spectrum of $\Delta + \overline{\xi}_k/\kappa$.
\end{proof}


\subsubsection{A spectral estimate}
\label{S8.4.2}

Let $(B(x))_{x\in\Z^d}$ be an arbitrary partition of $\Z^d$ into finite boxes. Let $\langle\cdot,\cdot\rangle$ 
be the scalar product on $\R^{B}$ and on $\ell^2(\Z^d)$. Let $V\colon\,\Z^d\to\R$ be bounded such that 
there is a $\delta>0$ for which
\begin{equation}
\label{eq:V}
\frac{1}{|B(x)|} \sum_{y\in B(x)} V(y) \leq 2\delta \qquad x\in\Z^d.
\end{equation}

The proof of the following lemma is deferred to Appendix~\ref{Appendix*}.

\begin{lemma}
\label{lem:eigenvalue}
Subject to \eqref{eq:V}, there is a $\kappa_0>0$ such that, for all $\kappa\geq\kappa_0$,
\begin{equation}
\label{eq:eigenvalue}
\sup_{\substack{f\in l^2(\Z^d)\\\|f\|_2=1}} 
\Big\langle \Big(\Delta +\frac{1}{\kappa} V\Big)f,f\Big\rangle \leq 4\frac{1}{\kappa}\delta.
\end{equation}
\end{lemma}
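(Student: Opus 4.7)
The plan is to bound the Rayleigh quotient $\langle (\Delta+V/\kappa)f, f\rangle$ for $f\in\ell^2(\Z^d)$ with $\|f\|_2=1$ by $4\delta/\kappa$. Writing $\langle\Delta f,f\rangle = -\mathcal{E}(f,f)$ for the Dirichlet form $\mathcal{E}(f,f)=\tfrac12\sum_{x\sim y}(f(x)-f(y))^2$, it suffices to show $\kappa^{-1}\sum_x V(x)f(x)^2\leq \mathcal{E}(f,f)+4\delta/\kappa$. The first step is to introduce the box-average surrogate $g(x):=|B(x)|^{-1}\sum_{y\in B(x)}f(y)^2$, which is constant on each box, and split $\sum_x V(x) f(x)^2 = \sum_x V(x)g(x) + \sum_x V(x)(f(x)^2-g(x))$. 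The ``coarse'' part satisfies $\sum_x V(x)g(x) = \sum_B g|_B\cdot|B|\bar V_B \leq 2\delta\sum_B |B|\,g|_B = 2\delta\|f\|_2^2 = 2\delta$ directly from the hypothesis \eqref{eq:V}.

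For the ``fine'' part I would exploit the fact that $f^2-g$ has zero mean on each box. Subtracting the constant $\bar V_B$ inside each box yields $\sum_x V(x)(f(x)^2-g(x)) = \sum_B\sum_{x\in B}(V(x)-\bar V_B)(f(x)^2-g(x))$, and Cauchy--Schwarz per box gives the bound $\|V-\bar V_B\|_{L^2(B)}\|f^2-g\|_{L^2(B)}$. The first factor is at most $\|V\|_\infty\sqrt{|B|}$, while the second is controlled by the Neumann Poincaré inequality on $B$ applied to $f^2$: using $f^2(x)-f^2(y)=(f(x)-f(y))(f(x)+f(y))$ together with the pointwise bound $|f(x)|\leq \|f\|_2=1$, one gets $\|f^2-g\|_{L^2(B)}^2 \leq C\,C_P(B)\,\mathcal{E}_B(f,f)$, where $C_P(B)$ is the Poincaré constant of $B$ and $\mathcal{E}_B$ is the Dirichlet form restricted to edges inside $B$. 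A global Cauchy--Schwarz across boxes, weighted by $\|f\|_{L^2(B)}^2$ (which sums to $1$), then yields
\begin{equation*}
\Big|\sum_x V(x)(f(x)^2-g(x))\Big| \leq C'\|V\|_\infty\sqrt{|B|\,C_P(B)\,\mathcal{E}(f,f)}.
\end{equation*}

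Finally, applying AM--GM in the form $2\sqrt{ab}\leq a+b$ with $a=\mathcal{E}(f,f)$ absorbs the $\sqrt{\mathcal{E}(f,f)}$ factor into the $-\mathcal{E}(f,f)$ inherited from $\langle\Delta f,f\rangle$, leaving a residual of order $\|V\|_\infty^2 |B|\,C_P(B)/\kappa^2$. Choosing $\kappa_0$ large enough that this residual is bounded by $2\delta/\kappa$ then produces the required estimate $\langle(\Delta+V/\kappa)f,f\rangle\leq 2\delta/\kappa + 2\delta/\kappa = 4\delta/\kappa$. The main obstacle is the oscillation term: the Neumann Poincaré constant on a cube of side $L$ scales as $L^2$, so $\kappa_0$ depends on the box geometry through $|B|C_P(B)$, and the delicate point is that the pointwise bound $|f|\leq 1$ and the zero-mean structure of $f^2-g$ on each box must cooperate to convert an $\ell^\infty$-bounded potential into Dirichlet energy. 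Making this balance quantitative, and checking that the resulting threshold $\kappa_0$ is compatible with the parameters $M$, $A$, $\delta$ entering the surrounding propositions, is where the technical content of the appendix lies.
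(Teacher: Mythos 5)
Your proposal follows a genuinely different route from the paper's. The paper decomposes $f$ on each box as $\alpha\one+g$ with $g\perp\one$, invokes the Neumann spectral gap $\langle\Delta_{B(x)}g,g\rangle\leq-\eta\|g\|_2^2$, and directly produces a per-box inequality of the form $\langle(\Delta_{B(x)}+V/\kappa)f^{B(x)},f^{B(x)}\rangle\leq(4\delta/\kappa)\|f^{B(x)}\|_2^2$, which sums cleanly because both sides are quadratic in the restriction of $f$ to the box. You instead coarse-grain $f^2$ and control the oscillation via a Neumann Poincar\'e inequality; the two mechanisms are dual (the Poincar\'e constant is the reciprocal of the spectral gap), so the conceptual overlap is real, but the nonlinear substitution $f\mapsto f^2$ introduces an extra subtlety that your write-up does not navigate correctly.

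The gap is in the sentence ``A global Cauchy--Schwarz across boxes, weighted by $\|f\|_{L^2(B)}^2$'' and the displayed inequality it is supposed to produce. With the \emph{global} bound $|f(x)+f(y)|\leq 2\|f\|_2=2$ your per-box estimate is $\|f^2-g\|_{L^2(B)}\lesssim\sqrt{C_P\,\mathcal{E}_B(f,f)}$, so the oscillation term on box $B$ is bounded by a quantity proportional to $\sqrt{\mathcal{E}_B(f,f)}$ with \emph{no} factor $\|f\|_{L^2(B)}$. Such bounds do not sum: if $f$ is spread evenly over $N$ boxes, $\sum_B\sqrt{\mathcal{E}_B(f,f)}$ is a factor $\sqrt{N}$ larger than $\sqrt{\mathcal{E}(f,f)}$, and no choice of weights in Cauchy--Schwarz removes it, because $\sum_B\mathcal{E}_B(f,f)/\|f\|_{L^2(B)}^2$ can diverge. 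The displayed inequality is therefore not a consequence of the steps you describe.

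The repair is to use the \emph{per-box} $\ell^\infty$--$\ell^2$ bound $\|f\|_{L^\infty(B)}\leq\|f\|_{L^2(B)}$ (valid in the discrete setting) in place of the global one. Then $|f(x)+f(y)|\leq 2\|f\|_{L^2(B)}$ for $x,y\in B$, and the Poincar\'e step yields
\begin{equation*}
\|f^2-g\|_{L^2(B)}\ \lesssim\ \sqrt{C_P}\,\|f\|_{L^2(B)}\,\sqrt{\mathcal{E}_B(f,f)},
\end{equation*}
so the per-box oscillation is $\lesssim\|V\|_\infty\sqrt{|B|\,C_P}\,\|f\|_{L^2(B)}\,\sqrt{\mathcal{E}_B(f,f)}$. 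Now plain Cauchy--Schwarz across boxes gives $\sum_B\|f\|_{L^2(B)}\sqrt{\mathcal{E}_B(f,f)}\leq(\sum_B\|f\|_{L^2(B)}^2)^{1/2}(\sum_B\mathcal{E}_B(f,f))^{1/2}=\sqrt{\mathcal{E}(f,f)}$, and AM--GM with a suitable parameter absorbs the square root into $-\mathcal{E}(f,f)$, leaving a residual of order $\|V\|_\infty^2|B|C_P/\kappa^2$, which is $\leq 2\delta/\kappa$ once $\kappa\geq\kappa_0\sim\|V\|_\infty^2|B|C_P/\delta$. With this one correction the argument closes and gives the same threshold $\kappa_0$ as the paper's route. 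Note that your description of the ``delicate point'' misidentifies the role of the pointwise bound: the global $|f|\leq 1$ is harmless but useless for the summation; the per-box version supplying the $\|f\|_{L^2(B)}$ factor is the essential ingredient.
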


\noindent
Lemma~\ref{lem:eigenvalue} and the Rayleigh-Ritz principle yield that the top of the spectrum 
of $\Delta +\frac{1}{\kappa} V$ is bounded from above by $4\frac{1}{\kappa}\delta$ for 
$\kappa\geq\kappa_0$.


\subsection{Completion of the proof of Proposition \ref{prop:localcontrol}}
\label{S8.4.3}

\begin{proof}
Fix $\delta >0$, $A>1$ and $m>1$. By Lemma~\ref{lem:FKestimate}, there is a $\kappa_0>0$ such 
that, for all $\kappa\geq\kappa_0$ and $\xi$-a.s.\ for all $x\in\Z^d$,
\begin{equation}
\label{eq:applicationFKest}
\begin{aligned}
\log &E_x\Bigg(\exp\bigg\{
\int_0^{A}\overline{\xi}(X^{\kappa}(s),A-s)\, ds\bigg\}
\one\big\{N(X^{\kappa};A)\leq M\kappa A\big\}\Bigg)\\
&\leq \frac{\kappa}{m} \sum_{k=1}^{Am} \lambda_{1}(\overline{\xi}_k/\kappa),
\qquad \kappa\geq\kappa_0=\kappa_0(M,A).
\end{aligned}
\end{equation}
Next, by Lemma~\ref{lem:eigenvalue} with $V=\overline{\xi}_k$, $k\in\{1,2,\ldots,Am\}$ (recall
(\ref{eq:xin})) and $B(x) = \pi_1(B_{1}^{A}(x,0))$ (recall (\ref{Bblocks}); $\pi_1$ denotes 
the projection onto the spatial coordinates), there is a $\kappa_1>0$ such that, for all 
$\kappa\geq\kappa_1$ and all $k\in\{1,2,\ldots,Am\}$,
\begin{equation}
\label{eq:applicationeigenvalueest}
\lambda_1(\overline{\xi}_k/\kappa)\leq 4\frac{1}{\kappa}\delta.
\end{equation}
This shows that, $\xi$-a.s.\ for $\kappa\geq\max\{\kappa_0,\kappa_1\}$, any block $B_{1}^{A,4M\kappa}
(x,0)$, $x\in\Z^d$, is $\varepsilon$-sufficient at level $1$. The stationarity of $\xi$ in time completes the 
proof.
\end{proof}


\appendix


\section{Proof of Lemmas \ref{lem:LDPest}--\ref{lem:crossing}}
\label{Appendix}

In this section we prove two lemmas that were used in Section~\ref{S5}. 


\subsection{Proof of Lemma \ref{lem:LDPest}}
\label{A1}

\begin{proof}
Our first observation is that
\begin{equation}
\label{eq:LDP}
\limsup_{t\to\infty} \frac{1}{t} \log 
E_0\left(\exp\left\{\sum_{i=1}^{n}\beta_i \sum_{x \in I_i}
l_t(X^\kappa,x)\right\}\right) \leq \mu,
\end{equation}
where
\begin{equation}
\label{eq:varrep}
\mu = \sup_{\substack{ f\in l^2(\Z)\colon\\ \|f\|_2=1,\,f\geq 0}} \mu(f)
\end{equation}
with
\begin{equation}
\mu(f) = \left(f,\left[\kappa\Delta+\sum_{i=1}^n \beta_i\one_{I_i}\right]f\right)
= \sum_{i=1}^{n}\beta_i \sum_{x\in I_i}f^2(x) -\kappa \sum_{x\in\Z} [f(x+1)-f(x)]^2. 
\end{equation}
Indeed, this follows from the large deviation principle for the occupation time measure
of one-dimensional simple random walk on $\Z$ with jump rate $2\kappa$ (which is 
the continuous-time Markov process with generator $\kappa\Delta$) in combination 
with Varadhan's lemma (see \cite[Chapters 3--4]{dH00}). A formal proof proceeds by 
truncating $\Z$ to a large finite torus, wrapping the random walk around the torus, 
deriving the claim for a fixed torus size, letting the torus  size tend to infinity, and 
showing that the variational formula on the finite torus converges to the variational 
formula on $\Z$. The details are standard and are left to the reader (see 
\cite[Chapter 8]{dH00}).

We claim that $\mu$ is the largest eigenvalue of $\kappa\Delta + \sum_{i=1}^{n}\beta_{i}\one_{I_i}$. 
Indeed, by \cite[Theorem 2.2]{HMO11} the operator $\kappa\Delta + \sum_{i=1}^{n}\beta_{i}\one_{I_i}$ 
has at least one eigenvalue. Consequently, the min-max principle \cite[Theorem XIII.1]{RS4} yields the 
claim. The inequality in \eqref{eq:localtimelem} now follows from \cite[Corollary 1.4]{S10}. 
\end{proof}


\subsection{Proof of Lemma \ref{lem:crossing}}
\label{A2}

\begin{proof}
Let $t,\kappa >0$ and let $X^{\kappa}$ be one-dimensional random walk with step rate $2\kappa >0$.
We first show that for all $C>0$,
\begin{equation}
\label{eq:RWsupremum}
P_0\bigg(\sup_{0\leq s\leq t} |X^{\kappa}(s)|\geq C\sqrt{\kappa t}\bigg)
\leq 2e^{-\frac{C^2\sqrt{\kappa t}}{2(C+3\sqrt{\kappa t})}}.
\end{equation}
The proof is based on a discretization argument in combination with Bernstein's inequality. Fix $n\in\N$ 
with $n\gg \kappa$, and define
\begin{equation}
\label{eq:q}
q^n(y) = 
\begin{cases}
1-\frac{2\kappa}{n}, &y=0 \\
\frac{\kappa}{n},  &y=\pm 1 \\
0, &\mbox{elsewhere}.
\end{cases}
\end{equation}
Let $X^{(n)}=(X^{(n)}(t))_{t\geq 0}$ be the discrete-time random walk with jump distribution $q^n$ 
and jump times $k/n$, $k\in\N$. Then, for each $t>0$, $(X^{(n)}(s))_{0\leq s\leq t}$ converges weakly 
as $n\to\infty$ to $(X^{\kappa}(s))_{0\leq s\leq t}$ in the Skorokhod space $D([0,t],\Z)$.  Since $X^{(n)}$ 
is unlikely to move in a short time interval, uniformly in $n$, it is enough to prove (\ref{eq:RWsupremum}) 
for $X^{(n)}$ with $n$ fixed. To that end, let $k\in\N$ be such that $k/n\leq t< (k+1)/n$. Note that, because 
$X^{(n)}$ is a martingale, Doob's maximal inequality and Bernstein's inequality yield
\begin{equation}
\label{eq:supest}
P_0\bigg(\sup_{0\leq s\leq t} X^{(n)}(s)\geq C\sqrt{\kappa t}\bigg)
\leq \exp\bigg\{-\frac{C^2\kappa t}{2(C\sqrt{\kappa t} + 3\kappa t)}\bigg\}
\qquad \forall\ C>0.
\end{equation}
The same inequality is valid for the probability of $\sup_{0\leq s\leq t}[ -X^{(n)}(s)]\geq C\sqrt{\kappa t}$, 
which yields the claim in \eqref{eq:RWsupremum}.

Next, note that if $X^{\kappa}$ leaves the spatial part of a space-time block $B_1(x,k;\kappa)$, then 
there is at least one coordinate $j\in\{1,\ldots,d\}$ such that $\pi_j(X^{\kappa}(s)) \notin [\sqrt{\kappa}x(j)A,
\sqrt{\kappa}(x(j)+1)A)$ for some $s \in [kA, (k+1)A)$, where $\pi_j(X^{\kappa})$ denotes the projection 
of $X^{\kappa}$ onto the $j$-th coordinate. In particular, if $X^{\kappa}$ visits $l_i^{\kappa}$ 
$(\kappa,1)$-blocks with $l_i^{\kappa}>d$ in the time interval $[(i-1)A,iA)$, then there is at least 
one coordinate that visits at least $l_i^{\kappa}/d$ one-dimensional $(\kappa,1)$-blocks, i.e., blocks 
of the form $[\sqrt{\kappa}xA,\sqrt{\kappa}(x+1)A)\times [kA,(k+1)A)$, $x\in\Z$. Consequently, without 
loss of generality we may assume that $X^{\kappa}$ is one-dimensional simple random walk with step 
rate $2\kappa$. Given $l_1^{\kappa},\ldots, l_{t/A}^{\kappa}\in\N$, we say that $X^{\kappa}$ has label 
$(l_1^{\kappa},\ldots, l_{t/A}^{\kappa})$ when $X^{\kappa}$ crosses $l_i^{\kappa}$ $(\kappa,1)$-blocks 
in the time interval $[(i-1)A,iA)$, $1\leq i\leq t/A$. 

Next, fix $C_7>0$ and let $k_*(\kappa)\geq C_7t$, and note that
\begin{equation}
\label{eq:labelest}
P_0\Big(X^{\kappa} \mbox{ crosses $k_*(\kappa)$ $(\kappa,1)$-blocks}\Big)
= \sum_{\substack{(l_i^{\kappa})_{i=1}^{t/A}\colon \\ \sum l_i^{\kappa} = k_*(\kappa)}}
P_0\Big(X^{\kappa} \mbox{ has label $\big(l_1^{\kappa},\ldots, l_{t/A}^{\kappa}\big)$}\Big).
\end{equation}
Using the Markov property and the fact that a path crossing $l_i^{\kappa}$ 
$(\kappa,1)$-blocks has to travel a distance at least $(l_i^{\kappa}-2)\sqrt{\kappa}A/2$,
we may further estimate each summand in the right-hand side of (\ref{eq:labelest}) by
\begin{equation}
\label{eq:Markovapp}
\begin{aligned}
&E_0\Big(\one\{X^{\kappa} \mbox{ has label $(l_1^{\kappa},\ldots, l_{t/A-1}^{\kappa})$}\}
P_{X^{\kappa}(t-A)}\big(X^{\kappa} \mbox{ has label $l_{t/A}^{\kappa}$}\big)\Big)\\
&\leq E_0\bigg(\one\{X^{\kappa} \mbox{ has label $(l_1^{\kappa},\ldots, l_{t/A-1}^{\kappa})$}\}
P_{X^{\kappa}(t-A)}\Big(\sup_{0\leq s\leq A}|X^{\kappa}(s)|\geq (l_i^{\kappa}-2)\sqrt{\kappa}A/2\Big)\bigg).
\end{aligned}
\end{equation}
To proceed, note that, by \eqref{eq:RWsupremum},
\begin{equation}
\label{eq:probaest}
P_{X^{\kappa}(t-A)} \Big(\sup_{0\leq s\leq A}|X^{\kappa}(s)|\geq (l_i^{\kappa}-2)\sqrt{\kappa}A/2\Big) \leq
\begin{cases}
2\exp\bigg\{-\frac14\frac{(l_i^{\kappa}-2)^2A\sqrt{\kappa A}}
{(l_i^{\kappa}-2)\sqrt{A} + 6\sqrt{\kappa A}}\bigg\}, &l_i^{\kappa} \geq 3 \\
1, &l_i^{\kappa}\leq 2.
\end{cases}
\end{equation}
An iteration of the estimates in (\ref{eq:Markovapp}) yields that the 
left-hand side of (\ref{eq:labelest}) is bounded from above by
\begin{equation}
\label{eq:plugin}
\sum_{\substack{(l_i^{\kappa})_{i=1}^{t/A}\colon \\ \sum l_i^{\kappa} = k_*(\kappa)}}
\prod_{\substack{i=1\\ l_i^{\kappa}\geq 3}}^{t/A} 
2\exp\bigg\{-\frac14\frac{(l_i^{\kappa}-2)^2A\sqrt{\kappa A}}
{(l_i^{\kappa}-2)\sqrt{A} + 6\sqrt{\kappa A}}\bigg\}.
\end{equation}
We have
\begin{equation}
\label{eq:sumlowerbound}
\sum_{\substack{i=1\colon \\ l_i^{\kappa}\geq 3}}^{t/A} \frac{(l_i^{\kappa}-2)^2}
{(l_i^{\kappa}-2)\sqrt{A}+6\sqrt{\kappa A}}
\geq \sum_{\substack{i=1\colon \\l_i^{\kappa}\geq 3}}^{t/A}
\frac{(l_i^{\kappa}-2)}{\sqrt{A} + 6\sqrt{\kappa A}}.
\end{equation}
Moreover,
\begin{equation}
\label{eq:lest}
\sum_{\substack{i=1\colon \\ l_i^{\kappa}\geq 3}}^{t/A} (l_i^{\kappa}-2) 
\geq k_*(\kappa)-\frac{4t}{A}.
\end{equation}
Inserting 
(\ref{eq:sumlowerbound}--\ref{eq:lest}) into (\ref{eq:plugin}), noting that \cite{HR18,E42}
\begin{equation}
\exists\,a,b>0\colon\,\mbox{ number of summands in right-hand side of (\ref{eq:labelest}) }
\leq a e^{b\sqrt{k_*(\kappa)}}/k_*(\kappa),
\end{equation}
and choosing $C_7$ and $\kappa$ large enough ($C_7 >4$ is sufficient), we get that the 
right-hand side of (\ref{eq:labelest}) is at most $e^{-C_8Ak_*(\kappa)}$. Here, $C_8$ is such 
that for all $k_*(\kappa)\geq C_7/A$ the inequality $(k_*(\kappa)-4t/A)/(1/\sqrt{\kappa} + 6)
\geq C_8k_*(\kappa)$ holds, which for $\kappa \geq 1$ is fulfilled when $C_7(1-7C_8)\geq 4$.
This yields the claim in \eqref{eq:controlonkkappa}.
\end{proof}


\section{Proof of Lemma~\ref{lem:eigenvalue}}
\label{Appendix*}

In Sections~\ref{B1}--\ref{B3} we prove a lemma that was used in Section~\ref{S8}. The 
proof is inspired by \cite[Theorem 12]{A10}.


\subsection{Neumann boundary conditions}
\label{B1}

In this section we recall the definition and some properties of the discrete Laplacian with 
Neumann boundary conditions. For further details we refer the reader to \cite{K07}.

Fix $x\in\Z^d$, $A>1$ and define the matrix $M_{B(x)}$ as
\begin{equation}
\label{eq:A}
M_{B(x)}(y,z)
= \begin{cases}
1, \mbox{ if } y,z\in B(x), \|y-z\|=1, \\
0, \mbox{ otherwise},
\end{cases}
\end{equation}
and the number of neighbors of $y$ in $B(x)$ as
\begin{equation}
\label{eq:n}
n_{B(x)}(y) = |\{z \in B(x)\colon\, \|y-z\|=1\}|.
\end{equation}

\begin{definition}
\label{def:neumann}
The Neumann Laplacian $\Delta_{B(x)}$ on $B(x)$ is defined via the formula
\begin{equation}
\label{eq:neumann}
\Delta_{B(x)} = M_{B(x)} - n_{B(x)},
\end{equation}
where $n_{B(x)}$ is the multiplication operator with the function $n_{B(x)}$.
\end{definition}

\begin{remark}
\label{rm:neumann}
The quadratic form associated with $\Delta_{B(x)}$ is given by
\begin{equation}
\label{eq:neumannform}
\Big\langle \Delta_{B(x)} f,g\Big\rangle 
=-\frac12 \sum_{\substack{y,z\in B(x)\\ \|y-z\|=1}}
[f(y)-f(z)]\,[g(y)-g(z)], \qquad f,g\in\R^{{B(x)}}.
\end{equation}
($\Delta_{B(x)}$ does not see that $B(x)$ is imbedded in $\Z^d$, which is why it is 
sometimes referred to as the graph Laplacian on $B(x)$.)
\end{remark}

\begin{lemma}
\label{lem:neumannproperties}
The following properties hold for all $x\in\Z^d$ and $A>1$.\\
(a) $\langle\Delta_{B(x)} f,f\rangle \leq 0$ for all $f\in\R^{B(x)}$.\\
(b) $\Delta_{B(x)}$ is self-adjoint.\\
(c) $\mathrm{ker}(\Delta_{B(x)}) = \R\one$, where $\one$ is the vector in $\R^{B(x)}$ 
with all entries equal to one.\\
(d) For all $f\in \ell^{2}(\Z^d)$,
\begin{equation}
\label{eq:laplacebound}
\langle\Delta f,f\rangle \leq \sum_{x\in \Z^d}
\Big\langle \Delta_{B(x)}f^{B(x)},f^{B(x)}\Big\rangle,
\end{equation}
where $f^{B(x)}$ is the restriction of $f$ to $B(x)$.
\end{lemma}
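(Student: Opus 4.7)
The plan is to first verify the quadratic form identity stated in Remark~\ref{rm:neumann} by direct computation, after which (a), (c) and (d) all follow almost immediately, while (b) is trivial from the definitions. For the identity, I would expand $\langle \Delta_{B(x)} f, g\rangle = \langle M_{B(x)} f, g\rangle - \langle n_{B(x)} f, g\rangle$, rewrite $n_{B(x)}(y)$ as the sum of $1$ over neighbors $z\in B(x)$, symmetrize in $y$ and $z$, and rearrange into the edge sum $-\tfrac12 \sum_{y,z\in B(x),\, \|y-z\|=1} [f(y)-f(z)][g(y)-g(z)]$. Statement (a) then follows by setting $g=f$ and reading off non-positivity termwise.

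For (b), $M_{B(x)}$ is a symmetric $\{0,1\}$-matrix (the relation $\|y-z\|=1$ is symmetric) and $n_{B(x)}$ is multiplication by a real-valued function, so both are self-adjoint on the finite-dimensional inner product space $\R^{B(x)}$, and hence so is their difference. For (c), the inclusion $\R\one\subseteq\ker(\Delta_{B(x)})$ is immediate from $M_{B(x)}\one(y) = n_{B(x)}(y)$. Conversely, any $f\in\ker(\Delta_{B(x)})$ satisfies $\langle \Delta_{B(x)} f, f\rangle = 0$, and by the quadratic form identity this forces $f(y) = f(z)$ for every edge $\{y,z\}$ inside $B(x)$. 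Since each $B(x)$ in the partition of Section~\ref{S8.4.2} is a product of intervals in $\Z^d$, it is connected in the nearest-neighbor graph, so $f$ must be constant on $B(x)$.

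The main step is (d), where I would expand both sides as sums over nearest-neighbor edges. The global quadratic form reads $\langle \Delta f, f\rangle = -\tfrac12 \sum_{y,z\in\Z^d,\, \|y-z\|=1} [f(y)-f(z)]^2$, and applying the identity from the first paragraph blockwise gives
\[
\sum_{x\in\Z^d} \langle \Delta_{B(x)} f^{B(x)}, f^{B(x)}\rangle = -\tfrac12 \sum_{x\in\Z^d} \sum_{\substack{y,z\in B(x) \\ \|y-z\|=1}} [f(y)-f(z)]^2.
\]
Because $(B(x))_{x\in\Z^d}$ is a partition of $\Z^d$, every nearest-neighbor edge of $\Z^d$ either sits inside exactly one block or straddles two distinct blocks; the right-hand sum retains only the intra-block edges, while the expansion of $\langle \Delta f, f\rangle$ ranges over all edges. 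Subtracting the two expressions leaves a non-positive ``inter-box'' contribution on the side of $\langle \Delta f, f\rangle$, yielding the stated inequality. I do not foresee any serious obstacle: all operators involved are finite-range and self-adjoint, and the only analytic input is that $f\in\ell^2(\Z^d)$ ensures absolute summability of the edge sums so that the rearrangements above are justified.
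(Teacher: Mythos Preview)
Your proposal is correct and follows essentially the same route as the paper: derive (a)--(c) from the quadratic form identity of Remark~\ref{rm:neumann}, and prove (d) by expanding both Dirichlet forms as edge sums and observing that the block sum drops the non-negative inter-box contributions. You even supply two details the paper leaves implicit, namely the derivation of the quadratic form identity itself and the explicit appeal to connectedness of each box $B(x)$ in part (c).
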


\begin{proof} 
Fix $x\in\Z^d$ and $A>1$.\\
(a) and (b) are consequences of Remark \ref{rm:neumann}.\\
(c) From Remark~\ref{rm:neumann} it is clear that constant functions are in the kernel of 
$\Delta_{B(x)}$. For the reverse direction, let $f\in \mathrm{ker}(\Delta_{B(x)})$. Again by 
Remark \ref{rm:neumann},
\begin{equation}
\label{eq:kernel}
0= \Big\langle \Delta_{B(x)}f,f\Big\rangle 
=  -\frac12 \sum_{\substack{y,z\in B(x) \\ \|y-z\|=1}} [f(y)-f(z)]^2.
\end{equation}
Hence, for all $y\in B(x)$ we have that $f(y)=f(z)$ for all $z$ such that $\|y-z\|=1$, $z\in B(x)$.\\
(d) Let $f\in\ell^{2}(\Z^d)$. Then
\begin{equation}
\label{eq:laplacerewrite}
\begin{aligned}
-2\langle \Delta f,f\rangle 
&= \sum_{\substack{y,z\in\Z^d \\ \|y-z\|=1}} [f(y)-f(z)]^2
= \sum_{x\in \Z^d} \sum_{y\in B(x)}\,\,\sum_{\substack{z\in\Z^d \\ \|y-z\|=1}} [f(y)-f(z)]^2\\
&\geq \sum_{x\in\Z^d}\sum_{y\in B(x)} \sum_{\substack{z\in B(x) \\ ||y-z||=1}} [f(y)-f(z)]^2
= -2\sum_{x\in \Z^d} \Big\langle \Delta_{B(x)} f^{B(x)}, f^{B(x)}\Big\rangle,
\end{aligned}
\end{equation}
where the second equality uses that $(B(x))_{x\in \Z^d}$ is a partition of $\Z^d$, while the third 
equality follows from Remark \ref{rm:neumann}. 
\end{proof}


\subsection{Proof of Lemma \ref{lem:eigenvalue} subject to a further lemma}
\label{B2}

Let $\|\cdot\|_2$ stand for both the Euclidean norm on $\R^{B}$ and the $\ell^2$-norm 
on $\ell^2(\Z^d)$.

\begin{lemma}
\label{lem:finitebox}
Subject to \eqref{eq:V}, there is a $\kappa_0>0$ such that, for all $\kappa\geq\kappa_0$, 
all $f\in\R^{B(x)}$ and all $x\in\Z^d$,
\begin{equation}
\label{eq:finitebox}
\Big\langle (\Delta_{B(x)}+\frac{1}{\kappa} V) f,f\Big\rangle 
\leq 4\frac{1}{\kappa}\delta \|f\|_2^2.
\end{equation}
\end{lemma}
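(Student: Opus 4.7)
My plan is to decompose $f$ into its projection onto constants and its orthogonal complement, and treat these two contributions separately. Set $\bar f = \frac{1}{|B(x)|}\sum_{y\in B(x)}f(y)$, $f^\parallel = \bar f\,\one$ and $f^\perp = f - f^\parallel$. Then $f^\parallel \perp f^\perp$, so $\|f\|_2^2 = \|f^\parallel\|_2^2 + \|f^\perp\|_2^2$ and $\|f^\parallel\|_2^2 = |B(x)|\,\bar f^2$. By Lemma~\ref{lem:neumannproperties}(c), $\Delta_{B(x)} f^\parallel = 0$, hence $\langle \Delta_{B(x)} f, f\rangle = \langle \Delta_{B(x)} f^\perp, f^\perp\rangle$. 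Since $\Delta_{B(x)}$ is self-adjoint and negative semidefinite with kernel $\R\one$, its restriction to $\one^\perp$ has a strictly positive smallest eigenvalue $\lambda_2 = \lambda_2(B(x)) > 0$, and therefore $\langle \Delta_{B(x)} f^\perp, f^\perp\rangle \leq -\lambda_2 \|f^\perp\|_2^2$.

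For the potential term, I would expand
\[
\langle V f, f\rangle = \bar f^{\,2}\sum_{y\in B(x)} V(y) \;+\; 2\bar f \sum_{y\in B(x)} V(y)\, f^\perp(y) \;+\; \sum_{y\in B(x)} V(y)\, f^\perp(y)^2.
\]
By assumption \eqref{eq:V}, the first term is bounded by $2\delta |B(x)|\,\bar f^{\,2} = 2\delta\|f^\parallel\|_2^2$. The third term is at most $\|V\|_\infty \|f^\perp\|_2^2$. For the cross term, Cauchy--Schwarz together with the identity $|\bar f|\sqrt{|B(x)|} = \|f^\parallel\|_2$ yields the bound $2\|V\|_\infty \|f^\parallel\|_2 \|f^\perp\|_2$, and by Young's inequality this is at most $2\delta\|f^\parallel\|_2^2 + \frac{\|V\|_\infty^2}{2\delta}\|f^\perp\|_2^2$.

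Collecting the pieces,
\[
\Big\langle \big(\Delta_{B(x)} + \tfrac{1}{\kappa}V\big)f, f\Big\rangle \;\leq\; \frac{4\delta}{\kappa}\|f^\parallel\|_2^2 \;-\; \left(\lambda_2 - \frac{1}{\kappa}\Big(\|V\|_\infty + \frac{\|V\|_\infty^2}{2\delta}\Big)\right)\|f^\perp\|_2^2.
\]
For $\kappa_0$ large enough that the bracketed coefficient of $\|f^\perp\|_2^2$ is non-negative, the second term on the right is $\leq 0$, and the conclusion $\langle (\Delta_{B(x)} + \tfrac{1}{\kappa}V)f, f\rangle \leq \frac{4\delta}{\kappa}\|f^\parallel\|_2^2 \leq \frac{4\delta}{\kappa}\|f\|_2^2$ follows.

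The main obstacle I anticipate is the uniformity of $\kappa_0$ over $x\in\Z^d$: the argument needs a uniform positive lower bound on the Neumann spectral gaps $\lambda_2(B(x))$ and a uniform upper bound on $\|V\|_\infty$. Both are harmless in the intended application of Proposition~\ref{prop:localcontrol}, where every $B(x)$ is a translate of the same finite cube and $V = \overline\xi_k$ inherits a uniform bound from $\overline\xi < 2\delta A^d$, but they are the only places where the constants enter, and any version of the lemma with heterogeneous box shapes would require a corresponding uniform spectral gap assumption.
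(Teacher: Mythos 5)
Your proof is correct and follows essentially the same route as the paper's: decompose $f$ along $\R\one$ and its orthogonal complement, exploit the strictly negative Neumann spectral gap on $\one^\perp$, bound the three pieces of the potential term using \eqref{eq:V} for the diagonal piece and Cauchy--Schwarz plus Young's inequality (with the same choice $\gamma = 2\delta/\|V\|_\infty^2$) for the cross piece, then absorb the $\|f^\perp\|_2^2$ contribution by taking $\kappa$ large. Your closing remark about uniformity of the spectral gap and of $\|V\|_\infty$ over $x$ is a point the paper leaves implicit (it applies the lemma with $B(x)$ a translate of a fixed cube and $V=\overline\xi_k$ bounded by $2\delta A^d$), and it is worth making explicit.
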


\noindent
The proof of Lemma~\ref{lem:finitebox} is deferred to Section~\ref{B3}. First we complete
the proof of Lemma~\ref{lem:eigenvalue} subject to Lemma~\ref{lem:finitebox}.

\begin{proof}
Let $f\in\ell^{2}(\Z^d)$ and $\kappa\geq\kappa_0$, where $\kappa_0$ is chosen according to 
Lemma~\ref{lem:finitebox}. Then, by Lemma \ref{lem:neumannproperties}(d) and the fact that 
$(B(x))_{x\in\Z^d}$ is a partition of $\Z^d$, we may estimate
\begin{equation}
\label{eq:neumannapplication}
\Big\langle (\Delta +\frac{1}{\kappa} V) f,f\Big\rangle
\leq \sum_{x\in \Z^d}
\Big\langle \Big(\Delta_{B(x)}+\frac{1}{\kappa} V\Big) f^{B(x)}, f^{B(x)}\Big\rangle.
\end{equation}
Since $f=\sum_{x\in \Z^d} f^{B(x)}$, we have $\|f\|^2 = \sum_{x\in \Z^d} \|f^{B(x)}\|_2^2$. 
Combining (\ref{eq:finitebox}--\ref{eq:neumannapplication}), we get the claim.
\end{proof}


\subsection{Proof of Lemma \ref{lem:finitebox}}
\label{B3}

\begin{proof}
Fix $x\in\Z^d$ and $A>1$. First recall that, by Lemma \ref{lem:neumannproperties}(c), 
$\mathrm{ker}(\Delta_{B(x)}) = \R\one$, so that we can write each $f\in \R^{B(x)}$ 
as $f=\alpha \one + g$, $\alpha\in\R$, $g\in \big(\R\one\big)^{\perp}$. Therefore, 
using that $\Delta_{B(x)}$ is self-adjoint, see Lemma \ref{lem:neumannproperties}(b) and hence symmetric, we obtain that
\begin{equation}
\label{eq:neumannvalue}
\Big\langle \Big(\Delta_{B(x)}+\frac{1}{\kappa} V\Big)f,f\Big\rangle
=\Big\langle \Delta_{B(x)} g,g\Big\rangle
+ \frac{1}{\kappa} \big[\alpha^2 \langle V\one,\one\rangle 
+ 2\alpha\langle  V\one,g\rangle + \langle Vg,g\rangle\big].
\end{equation}
Using that the unit sphere intersected with $\big(\R\one)^{\perp}$ is compact, that 
$\Delta_{B(x)}$ is negative on $\big(\R\one\big)^{\perp}$, see Lemma \ref{lem:neumannproperties}(a), and that the scalar product
is continuous, we deduce that there is an $\eta>0$ such that $\langle \Delta_{B(x)} h,
h\rangle \leq -\eta$ for all $h\in\big(\R\one\big)^{\perp}$ with $\|h\|_2=1$. Hence
we may estimate the right-hand side of (\ref{eq:neumannvalue}) from above by
\begin{equation}
\label{eq:furtherestimate}
-\eta \|g\|_2^2 + \frac{1}{\kappa}\big[\alpha^2 \langle V\one,\one\rangle 
+ 2\alpha\langle  V\one,g\rangle + \langle Vg,g\rangle\big].
\end{equation}
Next, by \eqref{eq:V}, we have $\langle V\one,\one\rangle = \sum_{y\in B(x)} V(y) \leq 2\delta |B(x)|$.
An additional application of the Cauchy-Schwarz inequality shows that (\ref{eq:furtherestimate}) 
is at most
\begin{equation}
\label{eq:applicationofassumption}
-\eta \|g\|_2^2 +\frac{1}{\kappa}\big[\alpha^2 2\delta |B(x)| +2\alpha \|V\one\|_2\|g\|_2
+ \|Vg\|_2 \|g\|_2\big].
\end{equation}
Using the bound $\|Vg\|_2\leq \|V\|_{\infty} \|g\|_2$ (which also holds for $g$ replaced by $\one$) and the assumption that $V$ is bounded, we 
may further estimate (\ref{eq:applicationofassumption}) from above by
\begin{equation}
\label{eq:Vsmallerone}
-\eta \|g\|_2^2 + \frac{1}{\kappa}\big[\alpha^2 2\delta |B(x)|
+ 2\alpha \|V\|_{\infty} \|\one\|_2\|g\|_2 + \|V\|_{\infty}\|g\|_2^2\big].
\end{equation}
For any $a,b\in\R$ and $\gamma >0$ we have the inequality $2ab\leq \gamma a^2 + b^2/\gamma$. 
Pick $a=\alpha \|V\|_{\infty} \|\one\|_2$ and $b=\|g\|_2$. Then we may further estimate (\ref{eq:Vsmallerone}) 
from above by
\begin{equation}
\label{eq:appofbinominequ}
\begin{aligned}
& -\eta \|g\|_2^2 + \frac{1}{\kappa}\big[\alpha^2 2\delta |B(x)| 
+ \gamma \alpha^2 \|V\|_{\infty}^2 \|\one\|_2^2 + \|g\|_2^2/\gamma + \|g\|_2^2\big]\\
&= \|g\|_2^2\Big[-\eta +\frac{1}{\kappa}\Big(1+\frac{1}{\gamma}\Big)\Big]
+\frac{1}{\kappa}\big[2\delta+\gamma \|V\|_{\infty}^2\big]\alpha^2 \|\one\|_2^2,
\end{aligned}
\end{equation}
where we use that $\|\one\|_2^2=|B(x)|$. Now pick $\gamma =2\delta/\|V\|_{\infty}^2$ and note 
that, for $\kappa$ large enough so that $\frac{1}{\kappa}(1+1/\gamma-4\delta) \leq \eta$, we have 
$-\eta +\frac{1}{\kappa}(1+1/\gamma) \leq 4\frac{1}{\kappa}\delta$. Therefore, for $\kappa$ large 
enough we may estimate the right-most term in (\ref{eq:appofbinominequ}) from above by
\begin{equation}
\label{eq:finalest}
4\frac{1}{\kappa}\delta\Big[\|g\|_2^2+ \alpha^2 \|\one\|_2^2\Big] = 4\frac{1}{\kappa}\delta \|f\|_2^2.
\end{equation}
\end{proof}


\end{document}